\def\tab(#1){\mbox{\small$\young(#1)$}\,}
\def\({\big(}
\def\){\big)}
\let\Sym=\BS
\newcommand{\N}{\mathbb N}
\newcommand{\Z}{\mathbb Z}
\newcommand{\HH}{\mathscr{H}}
\renewcommand{\H}{\HH}
\DeclareMathAlphabet{\mathpzc}{OT1}{pzc}{m}{it}
\newcommand{\R}[1][n]{\mathscr R^\Lambda_{#1}}
\newcommand{\bi}{\mathbf{i}}
\newcommand{\bj}{\mathbf{j}}
\def\t{{\mathfrak t}}
\def\he{\hat{e}}
\def\hT{\hat{T}}
\def\hX{\hat{X}}
\def\hx{\hat{x}}
\def\hs{\hat{s}}
\newcommand{\lam}{\lambda}
\newcommand{\Lam}{\Lambda}
\newcommand\blam{{\boldsymbol\lambda}}
\DeclareMathOperator\Std{Std}
\def\SStd(#1,#2){\Std^\Lambda_{#2}(#1)}
\renewcommand\t{\mathfrak{t}}
\DeclareMathOperator{\pol}{Pol_\beta}
\DeclareMathOperator{\ppol}{Pol_n}
\def\wpol{\widetilde{\pol}}
\DeclareMathOperator{\height}{ht}
\DeclareMathOperator{\res}{res}
\DeclareMathOperator{\Ker}{Ker}
\DeclareMathOperator{\cha}{char}
\DeclareMathOperator{\ind}{Ind}
\DeclareMathOperator{\image}{Im}
\def\mod{\text{-mod}}
\newcounter{main}
\theoremstyle{plain}
\swapnumbers \numberwithin{equation}{section}
\newtheorem{prop}[equation]{Proposition}
\newtheorem{thm}[equation]{Theorem}
\newtheorem{cor}[equation]{Corollary}
\newtheorem{lem}[equation]{Lemma}
\newtheorem{conj}[equation]{Conjecture}
\theoremstyle{definition}
\newtheorem{dfn}[equation]{Definition}
\newtheorem{dfn2}[equation]{Definition and Theorem}
\theoremstyle{remark}
\newtheorem{rem}[equation]{Remark}
\def\map#1#2{\,{:}\,#1\!\longrightarrow\!#2}
  \gdef\set#1{\mathinner{\lbrace\,{\mathcode`\|"8000%
                                   \let|\midvert #1}\,\rbrace}}
\def\midvert{\egroup\mid\bgroup}
\newcommand\Tableau[2][-4]{
  \begin{tikzpicture}[scale=0.4,draw/.append style={thick,black},baseline=#1mm]
    \tableauRow=0
    \foreach \Row in {#2} {
       \tableauCol=1
       \foreach\k in \Row {
          \draw(\the\tableauCol,\the\tableauRow)+(-.5,-.5)rectangle++(.5,.5);
          \draw(\the\tableauCol,\the\tableauRow)node{\k};
          \global\advance\tableauCol by 1
       }
       \global\advance\tableauRow by -1
    }
  \end{tikzpicture}
}
\def\Bitab(#1|#2){\Bigg(%
\hspace*{1mm}\Tableau{#1}\hspace*{2mm}\Bigg|\hspace*{2mm}\Tableau{#2}\hspace*{1mm}\Bigg)
}
\def\Tritab(#1|#2|#3){\Bigg(%
  \hspace*{1mm}\Tableau{#1}\hspace*{2mm},\hspace*{2mm}\Tableau{#2}\hspace*{2mm}%
       ,\hspace*{2mm}\Tableau{#3}\hspace*{1mm}\Bigg)
}
\newcommand\ShadedTableau[2][\relax]{
  \begin{tikzpicture}[scale=0.4,draw/.append style={thick,black},baseline=-4mm]
    \ifx\relax#1\relax%
    \else 
      \foreach\box in {#1} { \filldraw[blue!30]\box+(-.5,-.5)rectangle++(.5,.5); }
    \fi
    \newcount\TabRow\newcount\TabCol
    \TabRow=0
    \foreach \Row in {#2} {
       \TabCol=1
       \foreach\k in \Row {
          \draw(\the\TabCol,\the\TabRow)+(-.5,-.5)rectangle++(.5,.5);
          \draw(\the\TabCol,\the\TabRow)node{\k};
          \global\advance\TabCol by 1
       }
       \global\advance\TabRow by -1
    }
  \end{tikzpicture}
}
\newcommand\YoungDiagram[2][\relax]{
  \begin{tikzpicture}[scale=0.5,draw/.append style={thick,black},baseline=-2mm]
    \ifx\relax#1\relax%
    \else 
    \foreach\box in {#1} {
      \filldraw[blue!30]\box rectangle ++(1,1);
    }
    \fi
    \newcount\tableauRow
    \tableauRow=0
    \foreach \diagramCol in {#2} {
       \draw(1,\the\tableauRow)grid ++(\diagramCol,1);
       \global\advance\tableauRow by -1
    }
  \end{tikzpicture}
}
\def\TriDiagram(#1|#2|#3){\Bigg(%
  \hspace*{1mm}\YoungDiagram{#1}\hspace*{2mm},\hspace*{2mm}\YoungDiagram{#2}\hspace*{2mm}%
    ,\hspace*{2mm}\YoungDiagram{#3}\hspace*{1mm}\Bigg)
}
\begin{document}
\title[Modified affine Hecke algebras and quiver Hecke algebras]
{Modified affine Hecke algebras and quiver Hecke algebras of type $A$}

\keywords{Affine Hecke algebras; cyclotomic Hecke algebras; quiver Hecke algebras}

\author{Jun Hu}
 \address{School of Mathematics and Statistics\\
  Beijing Institute of Technology\\
  Beijing, 100081, P.R. China}
  \email{junhu404@bit.edu.cn}

\author{Fang Li}
  \address{Department of Mathematics\\
  Zhejiang University\\
  Hangzhou, 310027, P.R. China}
  \email{fangli@zju.edu.cn}

\begin{abstract} We introduce some modified forms for the degenerate and non-degenerate affine Hecke algebras of type $A$. These are certain subalgebras living inside the inverse limit of cyclotomic Hecke algebras. We construct faithful representations and standard bases for these algebras and give some explicit description of their centers. We show that there are algebra isomorphisms between some generalized Ore localizations of these modified affine Hecke algebras and of the quiver Hecke algebras of type $A$. As an application, we show that the center conjecture for the cyclotomic quiver Hecke algebra of type $A$ holds if and only if the center conjecture for the cyclotomic Hecke algebra of type $A$ holds.
\end{abstract}

\maketitle


\section{Introduction}

Quiver Hecke algebras and their cyclotomic quotients are some remarkable $\Z$-graded algebra which are introduced by Khovanov and Lauda \cite{KhovLaud:diagI}, and by Rouquier \cite{Rou0}. These algebra have been a hot topic in recent years as they play an important role in the study of categorification of quantum groups. A recent significant theorem of Brundan and Kleshchev \cite[Theorem~1.1]{BK:GradedKL} says that the group algebra of the symmetric group (and more generally any cyclotomic Hecke algebra of type $A$) is isomorphic to a cyclotomic quiver Hecke algebra, which shows in particular that these cyclotomic Hecke algebras are $\Z$-graded. The focus of the paper is to try to remove the word ``cyclotomic" and lift to give an isomorphism from certain modified form of the affine Hecke algebra in type $A$ to certain modified form of the quiver Hecke algebra in type $A$.

We start with some basic notations and definitions. Let $\Z$ be the set of integers and $\N$ the set of non-negative integers. Let $e\in\{0,2,3,4,\dots\}$ be a fixed integer and $I:=\Z/e\Z$. Let $\Gamma_e$ be the quiver with vertex set $I$ and edges $i\longrightarrow i+1$, for all $i\in I$. Following \cite[Chapter~1]{Kac},  attach to $\Gamma_e$ the standard Lie theoretic data of a Cartan matrix $(a_{ij})_{i,j\in I}$, simple roots $\set{\alpha_i|i\in I}$, simple coroots $\set{\alpha_i^\vee|i\in I}$, fundamental weights $\set{\Lambda_i|i\in I}$ and positive root lattice $Q^+=\bigoplus_{i\in I}\N\alpha_i$. For each $\beta=\sum_{i\in I}k_i\alpha_i\in Q^+$ we define $\height(\beta):=\sum_{i\in I}k_i$. For each $n\in\N$, we set $Q_n^{+}:=\{\beta\in Q^{+}|\height(\beta)=n\}$.

Let $P^+=\bigoplus_{i\in I}\N\Lambda_i$ be the set of dominant weights, $\ell\in\N$ and $\kappa_1,\dots,\kappa_\ell\in\Z/e\Z$. We define \begin{equation}\label{Lam}
\Lambda:=\Lambda_{\kappa_1}+\dots+\Lambda_{\kappa_\ell}\in P^+ £¬
\end{equation}
and call $\ell$ the level of $\Lam$. In this paper we shall consider both the non-degenerate and the degenerate settings as follows.

In the non-degenerate setting, we assume $K$ is a field, $1\neq q\in K^{\times}$ such that either $e$ is the minimal positive integer $k$ which satisfies that $1+q+q^2+\dots+q^{k-1}=0$; or $e=0$ when there is no such positive integer $k$. In this case, let $\H_n^{\Lambda}(q)$ be the  {\bf non-degenerate cyclotomic Hecke algebra} of type $A$ over $K$ with Hecke parameter $q$ and cyclotomic parameters $q^{\kappa_1},\dots,q^{\kappa_\ell}$ (cf. \cite{AK1}, \cite{Ch}). By definition, $\H^{\Lambda}_n(q)$ is generated by $T_0, T_1,\dots,T_{n-1}$ which satisfy the following relations:
$$\begin{matrix}
(T_0-q^{\kappa_1})\cdots (T_0-q^{\kappa_\ell})=0,\\
T_0T_1T_0T_1=T_1T_0T_1T_0,\\
(T_i-q)(T_i+1)=0,\,\,T_iT_k=T_kT_i,\quad 1\leq i<n, 0\leq k<n, |i-k|>1,\\
 T_iT_{i+1}T_i=T_{i+1}T_iT_{i+1},\,\,\quad 1\leq i\leq n-2 .
\end{matrix} $$
Let $L_1:=T_0$ and $L_{i+1}=q^{-1}T_iL_iT_i$ for $1\leq i<n$. The elements $L_1,L_2,\dots,L_n$ are called the {\bf Jucys--Murphy operators} of $\H_n^{\Lambda}(q)$.

In the degenerate setting, we assume that $K$ is field with $\cha K=e$. In particular, this means either $e=0$ or $e$ is a prime number. In this case, let $H_n^{\Lambda}$ be the  {\bf degenerate cyclotomic Hecke algebra} of type $A$ over $K$ with cyclotomic parameters $\kappa_1\cdot 1_K,\dots,\kappa_\ell\cdot 1_K$ (cf. \cite{G}, \cite{K}), where $1_K$ is the identity element of $K$. By definition, $H^{\Lambda}_n$ is generated by $s_1,\dots,s_{n-1},L_1,\dots,L_n$ which satisfy the following relations:
$$\begin{matrix}
(L_1-\kappa_1\cdot 1_K)\cdots (L_1-\kappa_\ell\cdot 1_K)=0,\\
s_i^2=1,\,\, s_is_k=s_ks_i,\quad \text{for}\,\, 1\leq i,k<n, |i-k|>1,\\
 s_is_{i+1}s_i=s_{i+1}s_is_{i+1},\,\,\quad \text{for}\,\, 1\leq i\leq n-2,\\
L_iL_k=L_kL_i,\,\,s_iL_l=L_ls_i,\,\,\,\,\text{for}\,\, 1\leq i<n, 1\leq k,l\leq n, l\neq i,i+1,\\
L_{i+1}=s_iL_is_i+s_i,\,\,\,\,\text{for}\,\, 1\leq i<n .
\end{matrix} $$
The elements ${L}_1,{L}_2,\dots,{L}_n$ are called the {\bf Jucys--Murphy operators} of $H_n^{\Lambda}$. Note that in general $L_1s_1L_1s_1\neq s_1L_1s_1L_1$ in $H_n^{\Lambda}$.

For any $\beta\in Q_n^+$, we define $$
I^{\beta}:=\{\bi=(i_1,\dots,i_n)\in I^n|\alpha_{i_1}+\dots+\alpha_{i_n}=\beta\} .
$$
For each $\bi\in I^n$, Brundan and Kleshchev have introduced in \cite[\S3.1, \S4.1]{BK:GradedKL} an idempotent in $\H_n^{\Lambda}(q)$ and an idempotent in $H_n^{\Lambda}$ which (by abuse of notations) are both denoted by $e(\bi)$. We define $e(\beta):=\sum_{\bi\in I^\beta}e(\bi)$. Then $e(\beta)$ is either $0$ or a block idempotent of $\H_n^{\Lambda}(q)$ (resp., of $H_n^{\Lambda}$).


By \cite{LM:AKblocks} and \cite{Brundan:degenCentre}, both the blocks of $\H_n^{\Lambda}(q)$ and of $H_n^{\Lambda}$ are parameterized by $\{\beta\in I^n|e(\beta)\neq 0\}$. For any block idempotents $e(\beta)$ of $\H_n^{\Lambda}(q)$ and of $H_n^{\Lambda}$, we define
\begin{equation}\label{block1}
\H_{\beta}^{\Lambda}(q):=e(\beta)\H_n^{\Lambda}(q),\quad\,\,H_{\beta}^{\Lambda}:=e(\beta)H_n^{\Lambda} ,
\end{equation}
which are the block subalgebras corresponding to $\beta$ of  $\H_n^{\Lambda}(q)$ and of $H_n^{\Lambda}$ respectively.

Let $\mathscr{R}_{\beta}:=\mathscr{R}_{\beta}(\Gamma_e)$ be the quiver Hecke algebra associated to $\Gamma_e$ and $\beta\in Q_n^{+}$ introduced by Khovanov and Lauda \cite{KhovLaud:diagI}, and by Rouquier \cite{Rou0}.
By definition, $\mathscr{R}_{\beta}$ is generated by the elements $\{\psi_1,\dots,\psi_{n-1}\} \cup   \{ y_1,\dots,y_n \} \cup \set{e(\bi)|\bi\in I^\beta}$. These elements are usually called KLR generators, and each $e(\bi)$ is called a KLR idempotent. We refer the readers to Section 2 for a list of defining relations. Let $\R[\beta]:=\R[\beta](\Gamma_e)$ be the quotient of $\mathscr{R}_{\beta}$ by the two-sided ideal generated by \begin{equation}\label{cyclotomic2}
y_1^{\<\Lambda,\alpha_{i_1}^\vee\>}e(\bi),\,\,\,\bi\in I^\beta .
\end{equation}
We call the algebra $\R[\beta]$ the {\bf cyclotomic quiver Hecke algebra} of type $A$ associated to $\beta$ and $\Lambda$. When the context is clear, we use the same letters to denote both the KLR generators of $\mathscr{R}_\beta$ and their canonical image in $\R[\beta]$, and use the same letter $e(\bi)$ to denote both the idempotents of $\H_n^{\Lambda}(q)$ and of $H_n^{\Lambda}$ and the KLR idempotent of $\R[\beta]$.

\begin{thm} [\protect{Brundan-Kleshchev~\cite[Theorem~1.1]{BK:GradedKL}}] \label{BKiso} Let $\beta\in Q_n^{+}$ and  $\H_\beta^{\Lambda}\in\{\H_\beta^{\Lambda}(q),H_\beta^{\Lambda}\}$.
 Then there is an isomorphism of $K$-algebras $\theta^\Lambda:  \R[\beta]\cong \H_\beta^{\Lambda}$ that sends $e(\bi)\mapsto e(\bi)$, for all $\bi\in I^\beta$ and \begin{align*}
y_re(\bi)&\mapsto\begin{cases}
  (1-q^{-i_r}L_r)e(\bi),&\text{if $\H_\beta^{\Lambda}=\H_\beta^{\Lambda}(q)$},\\[5mm]
  (L_r - i_r)e(\bi),&\text{if $\H_\beta^{\Lambda}=H_\beta^{\Lambda}$}.
\end{cases}\\
\psi_k e(\bi)&\mapsto\begin{cases}(T_k+P_k(\bi))Q_k(\bi)^{-1}e(\bi),&\text{if $\H_\beta^{\Lambda}=\H_\beta^{\Lambda}(q)$,}\\
(s_k+P_k(\bi))Q_k(\bi)^{-1}e(\bi), &\text{if $\H_\beta^{\Lambda}=H_\beta^{\Lambda}$,}
\end{cases}
\end{align*}
where $1\le r\le n$, $1\le k<n$, $P_k(\bi),Q_k(\bi)\in K[y_k,y_{k+1}]$ are certain polynomials introduced in
\cite[(3.22),(3.27--3.29),(4.27),(4.33--4.35)]{BK:GradedKL}. In particular, $\R[\beta]\neq 0$ if and only if $\sum_{\bi\in I^\beta}e(\bi)\neq 0$ in $\R[\beta]$, and $\H_\beta^{\Lambda}\neq 0$ if and only if $e(\beta)\neq 0$ in $\H_\beta^\Lambda$.
\end{thm}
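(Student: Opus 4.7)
The plan is to verify that the prescribed assignments extend to a well-defined algebra homomorphism $\theta_\Lambda : \R[\beta] \to \H_\beta^\Lambda$ and then to exhibit an explicit two-sided inverse. The foundational observation is that on the block $e(\beta)\H_n^\Lambda$ the Jucys--Murphy element $L_r$ acts with a single generalised eigenvalue $q^{i_r}$ (respectively $i_r$) on each component $e(\bi)\H_\beta^\Lambda$, so that $(1-q^{-i_r}L_r)e(\bi)$ (respectively $(L_r-i_r)e(\bi)$) is nilpotent. This guarantees that $Q_k(\bi)$, being a polynomial in $y_k, y_{k+1}$ with invertible constant term, has a polynomial inverse in $e(\bi)\H_\beta^\Lambda$, making the formula for $\psi_ke(\bi)$ meaningful.

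The first block of work is to check the defining relations of $\R[\beta]$ in $\H_\beta^\Lambda$ under this assignment. The idempotent decomposition, the $y$--$y$ commutations, and the $y$--$\psi$ relations for indices $r\ne k,k+1$ reduce directly to the corresponding statements for $L_r, T_k$ (respectively $L_r, s_k$). The cyclotomic relation \eqref{cyclotomic2} follows from $(L_1-q^{\kappa_1})\cdots(L_1-q^{\kappa_\ell})=0$ (respectively its degenerate analogue): on $e(\bi)$ the factors with $\kappa_j\ne i_1$ are invertible, leaving exactly $\langle\Lambda,\alpha_{i_1}^\vee\rangle$ factors, each of which is a unit multiple of $y_1e(\bi)$. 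The KLR quadratic relation for $\psi_k^2e(\bi)$ is equivalent, after substitution, to $(T_k-q)(T_k+1)=0$ (respectively $s_k^2=1$); the polynomials $P_k(\bi), Q_k(\bi)$ are designed precisely so that the residual polynomial in $y_k, y_{k+1}$ matches the one prescribed by the KLR relations.

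The main obstacle is the braid-like relation, namely that $(\psi_{k+1}\psi_k\psi_{k+1}-\psi_k\psi_{k+1}\psi_k)e(\bi)$ equals a specific polynomial in $y_k, y_{k+1}, y_{k+2}$, which vanishes unless $i_k=i_{k+2}$. Under $\theta_\Lambda$ this must be derived from the true braid identity $T_kT_{k+1}T_k=T_{k+1}T_kT_{k+1}$ (respectively $s_ks_{k+1}s_k=s_{k+1}s_ks_{k+1}$) together with the commutation rules for $T_k$ (respectively $s_k$) against $L_k, L_{k+1}, L_{k+2}$, followed by formal power series expansion of $Q_k(\bi)^{-1}$ and $Q_{k+1}(\bi)^{-1}$ in the nilpotent generators and a term-by-term comparison. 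The degenerate case is the more delicate one, since $L_1s_1L_1s_1\ne s_1L_1s_1L_1$ and one must use the identity $L_{k+1}=s_kL_ks_k+s_k$ repeatedly to rewrite everything in a normal form. This verification is the technical heart of the theorem.

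Finally, to promote the homomorphism to an isomorphism I would define the candidate inverse on the generators of $\H_\beta^\Lambda$ by the formally inverted expressions $L_re(\bi)\mapsto q^{i_r}(1-y_r)e(\bi)$ (respectively $(y_r+i_r)e(\bi)$) and $T_ke(\bi)\mapsto (\psi_kQ_k(\bi)-P_k(\bi))e(\bi)$ (respectively similarly for $s_k$), summed over $\bi\in I^\beta$ against the decomposition $e(\beta)=\sum_{\bi\in I^\beta}e(\bi)$. Verifying that the two compositions are the identity reduces to matching generators. As an alternative endgame one can argue that $\theta_\Lambda$ is surjective (its image clearly contains the idempotents, and recovers $L_r, T_k$ up to invertible factors) and compare dimensions via the known graded Specht-type basis of $\R[\beta]$ and the Ariki--Koike standard basis of $\H_\beta^\Lambda$; equality of dimensions forces injectivity.
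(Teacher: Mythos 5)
This theorem is not proved in the paper at all: it is stated verbatim and attributed to Brundan--Kleshchev \cite[Theorem~1.1]{BK:GradedKL}, and the authors simply use it as input. So there is no internal proof to compare against. Judged on its own terms as a reconstruction of the argument in the cited source, your sketch captures the correct overall structure: establish nilpotence of $y_re(\bi)$ (equivalently that $L_re(\bi)$ has $q^{i_r}$ as its only generalised eigenvalue), observe that $Q_k(\bi)$ is a unit in $e(\bi)\H_\beta^\Lambda e(\bi)$, verify the defining relations of $\R[\beta]$ case by case with the braid relation as the technical bottleneck, check the cyclotomic relation by factoring out the invertible $(L_1-q^{\kappa_j})e(\bi)$ with $\kappa_j\ne i_1$, and then write down the explicit inverse $T_ke(\bi)\mapsto(\psi_kQ_k(\bi)-P_k(\bi))e(\bi)$ and $L_re(\bi)\mapsto q^{i_r}(1-y_r)e(\bi)$. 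That is indeed how Brundan--Kleshchev proceed.

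Two caveats. First, your opening clause that ``$L_r$ acts with a single generalised eigenvalue $q^{i_r}$'' on $e(\bi)\H_\beta^\Lambda$ is what needs to be shown, not assumed: the idempotents $e(\bi)$ are by definition the simultaneous generalised-eigenspace projectors for the commuting family $L_1,\dots,L_n$, and their very existence inside $\H_n^\Lambda$ (rather than in a completion) requires the nilpotence argument first, so the logic must be organised a little more carefully than ``nilpotence follows from the single eigenvalue.'' Second, the alternative endgame you propose---comparing dimensions against the ``known graded Specht-type basis of $\R[\beta]$''---is circular as stated: the graded cellular/Specht bases of $\R[\beta]$ in the literature (e.g.\ Hu--Mathas) are constructed \emph{after} and \emph{using} the Brundan--Kleshchev isomorphism, so they cannot be invoked here as an independent dimension count. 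The two-sided inverse route is the one that actually closes the argument, and you should drop the alternative or replace it with a genuinely independent basis result.
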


Let $\H_n(q)$ and $H_n$ be the non-degenerate and the degenerate type $A$ affine Hecke algebras respectively. Then $\H_n^{\Lambda}(q)$ and $H_n^{\Lambda}$ are isomorphic to the quotients of  $\H_n(q)$ and $H_n$ by the two-sided ideals generated by $(X_1-q^{\kappa_1})\cdots (X_1-q^{\kappa_\ell})$ and $(x_1-\kappa_1\cdot 1_K)\cdots (x_1-\kappa_\ell\cdot 1_K)$ respectively. We refer the readers to Section 2 for unexplained notations and more details. There are many similarities on the structure and representation theory between the algebras $\H_\beta^{\Lambda}\in\{\H_\beta^{\Lambda}(q),H_\beta^{\Lambda}\}$ and $\R[\beta]$, and between the algebras $\H_n\in\{\H_{n}(q), H_{n}\}$ and $\mathscr{R}_n:=\bigoplus_{\beta\in Q_n^{+}}\mathscr{R}_\beta$. Both algebras $\mathscr{R}_n$ and $\R[\beta]$ are $\Z$-graded and many results on the representations of $\H_n$ and $\H_\beta^{\Lambda}$ have found their $\Z$-graded versions for the algebras $\mathscr{R}_n$ and $\R[\beta]$, see \cite{G}, \cite{K}, \cite{K2010}, \cite{LV}, \cite{M2015} and the references therein. In view of Brundan--Kleshchev's isomorphism Theorem \ref{BKiso}, one can roughly think of $\R[\beta]$ as a $\Z$-graded analogue of $\H_\beta^{\Lambda}$.

It is natural to ask if there is a similar isomorphism directly on the level of the affine Hecke algebra $\H_n$ and the quiver Hecke algebra $\mathscr{R}_n$. In this paper, for each $\beta\in Q_n^+$, we introduce some modified forms $\widehat{\H}_\beta\in\{\widehat{\H}_\beta(q),\widehat{H}_\beta\}$ for both the non-degenerate and the degenerate type $A$ affine Hecke algebras.
We construct explicit $K$-algebra isomorphisms $\theta: \widetilde{\mathscr{R}}_{\beta}\cong\widetilde{\H}_\beta(q)$,
$\theta': \widetilde{\mathscr{R}}'_{\beta}\cong\widetilde{H}_\beta$ between certain generalized Ore localizations $\widetilde{\H}_\beta(q), \widetilde{H}_\beta$, $\widetilde{\mathscr{R}}_{\beta}$ (or $\widetilde{\mathscr{R}}'_{\beta}$) of $\widehat{\H}_\beta(q)$, $\widehat{H}_\beta$ and $\mathscr{R}_\beta$ respectively, generalizing Brundan--Kleshchev's isomorphism between $\H_\beta^{\Lambda}$ and $\R[\beta]$. These modified affine Hecke algebras $\widehat{\H}_\beta$ and their generalized Ore localizations are both subalgebras of the inverse limit of the cyclotomic Hecke algebras $\mathscr{H}_\beta^{\Lambda}$. One of the key ingredients in our argument is a lift of the KLR idempotent $e(\bi)$ in the inverse limits of the cyclotomic Hecke algebras $\mathscr{H}_\beta^{\Lambda}$. The algebras $\widehat{\H}_\beta$ can be regarded as certain idempotent completions of the ``block" of the type $A$ affine Hecke algebra $\H_n$, which may remind the reader of Lusztig's definition of modified forms of quantized enveloping algebras \cite[Chapter 23]{Lusz2}. They are closely related to the original type $A$ affine Hecke algebra $\H_n$ in that every finite dimensional module over $\H_n$ which belongs to the block labelled by $\beta$ naturally becomes a module over $\widehat{\H}_\beta$ and this correspondence gives rise to a categorical  equivalence. Many classical results (including faithful representations, standard bases and description of the centers) for the original affine Hecke algebras are generalized to these  modified affine Hecke algebras, see Proposition \ref{rep}, Theorem \ref{basis1} and Theorem \ref{center0}.

There is a famous conjecture for $\HH_n^\Lam(q)$ which asserts that every central element of $\HH_n^\Lam(q)$ can be written as a symmetric polynomial in its Jucys-Murphy operators $L_1,\dots,L_n$. Another more recent conjecture for $\R[\beta]$ asserts that every central element of $\R[\beta]$ can be written as a symmetric element in its KLR $y$ generators $y_1,\dots,y_n$ and KLR idempotents $e(\bi), \bi\in I^\beta$. As one of the main application of our result, we show that the center conjecture for $\HH_n^\Lam(q)$ holds if and only if the center conjecture for $\R[\beta]$ holds for each $\beta\in Q_n^+$. Using the isomorphisms $\theta,\theta'$ one can identify the convolution products in the category of finite dimensional modules over affine Hecke algebras with the convolution product in the category of finite dimensional modules over quiver Hecke algebras. As further applications of our main results, we derive an equivalence of categories for quiver Hecke algebras and tensor products of its non-unital quiver Hecke subalgebras under certain conditions as well as a simplicity result for the convolution products of simple modules over quiver Hecke algebras.

We note that Rouquier has presented a similar isomorphism (without proof) between certain different localized from of $\H_n$ and $\mathscr{R}_n$ in the preprint
\cite[3.15, 3.18]{Rou1}. Our isomorphism $\theta,\theta'$ are different with Rouquier's isomorphisms and the algebra $\widehat{\H}_\beta$ we introduced in this paper does not appear in \cite{Rou1}.

The content of this paper is organised as follows. In Section 2, we recall some preliminary knowledge about the type $A$ affine Hecke algebras, the type $A$ quiver Hecke algebras, and their cyclotomic quotients. In Section 3, we introduce the modified forms of the affine Hecke algebras of type $A$. We construct faithful representations, standard bases and describe the centers for these modified affine Hecke algebras. We also introduce some generalized Ore localization for these modified affine Hecke algebras and quiver Hecke algebras. The general definition of generalized Ore localization is given in the appendix of this paper. The main results (Theorem \ref{mainthm0a} and \ref{mainthm0b}) are given in Section 4, where we set up isomorphisms between these generalized Ore localization for modified affine Hecke algebras and the generalized Ore localization for quiver Hecke algebras. The main idea in our approach is to construct a nice lift of the KLR idempotent $e(\bi)$ in the inverse limits of the cyclotomic Hecke algebras $\mathscr{H}_\beta^{\Lambda}$ and to embed the generalized Ore localizations for these modified affine Hecke algebras (resp., for the quiver Hecke algebras) into the inverse limits of cyclotomic Hecke algebras (resp., of the cyclotomic quiver Hecke algebras), see Lemma \ref{keyGeLilemma} and Corollary \ref{keycor1}. In Section 5 we give some applications of Theorem \ref{mainthm0a} and \ref{mainthm0b}. We show (Theorem \ref{mainapp1}) that the center conjecture for $\HH_n^\Lam(q)$ holds if and only if the center conjecture for $\R[\beta]$ holds for any $\beta\in Q_n^+$. We also obtain (Corollary \ref{app1}) an equivalence of categories for quiver Hecke algebras and tensor products of its non-unital quiver Hecke subalgebras under certain conditions and a simplicity result (Corollary \ref{app2}) for the convolution products of simple modules over quiver Hecke algebras.

\section*{Acknowledgements}

 The first author is supported by the National Natural Science Foundation of China (No. 11525102). The second author is supported by the National Natural Science Foundation of China (No. 11271318 and No. 11671350) and the Zhejiang Provincial Natural Science Foundation of China (No. LY19A010023).

\section{Preliminary}

In this section, we shall recall some preliminary results on the non-degenerate and the degenerate affine Hecke algebras of type $A$, and the quiver Hecke algebras associated to $\Gamma_e$ and $\beta\in Q^+$ and their cyclotomic quotients. In particular, we shall fix some choices of the polynomials $P_k(\bi),Q_k(\bi)\in K[y_k,y_{k+1}]$ in the construction of Brundan--Kleshchev's isomorphism Theorem \ref{BKiso}.

Recall that $\ell,n\in\N$, $e\in\{0,2,3,\dots\}$ and $\kappa_1,\dots,\kappa_\ell\in I:=\Z/e\Z$. Let $K$ be a field. In the non-degenerate setting, we assume that $1\neq q\in K^{\times}$ such that $e$ is the minimal positive integer $k$ satisfying $1+q+q^2+\dots+q^{k-1}=0$; or $e=0$ when there is no such positive integer $k$. Let $\H_n(q)$ be the {\bf non-degenerate type $A$ affine Hecke algebra} over $K$. By definition, $\H_n(q)$ is the unital associative $K$-algebra with generators $T_1,\dots,T_{n-1}$, $X_1^{\pm 1},\dots,X_n^{\pm 1}$ and relations:
\begin{align}
(T_i-q)(T_i+1)&=0,\,\,\quad 1\leq i<n, \label{1} \\
 T_iT_{i+1}T_i&=T_{i+1}T_iT_{i+1},\,\,\quad 1\leq i\leq n-2,\label{2}\\
  T_iT_k&=T_kT_i,\,\,\quad |i-k|>1, \label{3}\\
 X_i^{\pm 1}X_k^{\pm 1}&=X_k^{\pm 1}X_i^{\pm 1},\,\,\quad 1\leq i,k\leq n, \label{4}\\
 X_kX_k^{-1}&=1=X_k^{-1}X_k,\,\, \quad 1\leq k\leq n, \label{5}\\
 T_iX_k&=X_kT_i,\,\,\quad k\neq i,i+1, \label{6}\\
 X_{i+1}&=q^{-1}T_iX_{i}T_i,\,\,\quad 1\leq i<n . \label{7}
\end{align}
Note that one can also replace the last relation above with the following: \begin{align}
X_{i+1}T_i=T_iX_i+(q-1)X_{i+1},\,\,\quad 1\leq i<n . \label{7a}
\end{align}

The non-degenerate type $A$ cyclotomic Hecke algebra $\H_n^{\Lambda}(q)$ introduced in Section 1 is isomorphic to the quotient of $\H_n(q)$ by the two-sided ideal generated by \begin{equation}\label{cyclotomic}
\bigl(X_1-q^{\kappa_1}\bigr)\bigl(X_1-q^{\kappa_2}\bigr)\dots \bigl(X_1-q^{\kappa_\ell}\bigr) .
\end{equation}
Under this isomorphism,  $T_0$ is identified with the image of $X_1$ in $\H_n^{\Lambda}(q)$, and each $L_i$ is identified with the image of $X_i$ in $\H_n^{\Lambda}(q)$ for $1\leq i\leq n$.
For each $1\leq j<n$, we still use $T_j$ to denote the image of $T_j$ in $\H_n^{\Lambda}(q)$.

In the degenerate setting, we assume that $\cha K=e$ and hence either $e=0$ or $e$ is a prime number. Let $H_n$ be the {\bf degenerate type $A$ affine Hecke algebra}  over $K$. By definition, $H_n$ is the unital associative $K$-algebra with generators $s_1,\dots,s_{n-1}$, $x_1,\dots,x_n$ and relations:
\begin{align}
s_i^2&=1,\,\,\quad 1\leq i<n, \label{8} \\
 s_is_{i+1}s_i&=s_{i+1}s_is_{i+1},\,\,\quad 1\leq i\leq n-2, \label{9}\\
 s_is_k&=s_ks_i,\,\,\quad |i-k|>1, \label{10}\\
 x_i x_k&=x_k x_i,\,\, \quad 1\leq i,k\leq n, \label{11}\\
 s_ix_k&=x_ks_i,\,\,\quad k\neq i,i+1, \label{12}\\
 x_{i+1}&=s_ix_{i}s_i+s_i,\,\,\quad 1\leq i<n, \label{13}.
\end{align}
Note that one can also replace the last relation above with the following: \begin{align}
x_{i+1}s_i=s_ix_i+1,\,\,\quad 1\leq i<n . \label{13a}
\end{align}

The degenerate type $A$ cyclotomic Hecke algebra $H_n^{\Lambda}$ introduced in Section 1 is isomorphic to the quotient of $H_n$ by the two-sided ideal generated by \begin{equation}\label{cyclotomic11}
\bigl(x_1-{\kappa_1}\cdot 1_K\bigr)\bigl(x_1-{\kappa_2}\cdot 1_K\bigr)\dots \bigl(x_1-{\kappa_\ell}\cdot 1_K\bigr) .
\end{equation}
Under this isomorphism, each $L_i$ is identified with the image of $x_i$ in $H_n^{\Lambda}$ for $1\leq i\leq n$.
For each $1\leq j<n$, we still use $s_j$ to denote the image of $s_j$ in $H_n^{\Lambda}$. Inside both $H_n$ and $H_n^{\Lambda}$, the subalgebra generated by $s_1,\dots,s_{n-1}$ is isomorphic to the symmetric group algebra associated to the symmetric group $\Sym_n$ on $n$ letters (with $s_r$ being identified with the basic permutation $(r,r+1)$ for each $r$). Similarly, the subalgebra of $\HH_n(q)$ (resp., of $\HH_n^\Lam(q)$) generated by $T_1,\dots,T_{n-1}$ is isomorphic to the Iwahori--Hecke algebra associated to the symmetric group $\Sym_n$.

Let $\{t_k|1\leq k\leq n\}$ be a set of $n$ algebraically independent indeterminates over $K$. Let $\mathcal{P}_n:=K[t_1^{\pm 1},\dots,t_n^{\pm 1}]$ and $P_n:=K[t_1,\dots,t_n]$. Clearly there is a natural left action of $\Sym_n$ on $I^n$, $\mathcal{P}_n$ and $P_n$ respectively. More explicitly, if $1\leq r<n$, $\bi =(i_1,\dots,i_n)\in I^n$ and $f=f(t_1^{\pm 1},\dots,t_n^{\pm1})\in\mathcal{P}_n$, then $$\begin{aligned}
s_r\bi &=(i_1,\dots,i_{r-1},i_{r+1},i_r,i_{r+2},\dots,i_n),\\
s_r f &=f(t_1^{\pm 1},\dots,t_{r-2}^{\pm 1},t_{r+1}^{\pm 1},t_r^{\pm 1},t_{r+2}^{\pm 1},\dots,t_n^{\pm1}).
\end{aligned}$$

For any $f\in \mathcal{P}_n$, $g\in P_n$, $1\leq r<n$ and $1\leq k\leq n$, we define \begin{equation}\label{affAct1}\left\{\begin{aligned}
 X_k^{\pm 1}\ast f:&=t_k^{\pm 1} f,\\
 T_r\ast f:&=\bigl(t_{r+1}-qt_r\bigr)\frac{s_r(f)-f}{t_{r+1}-t_r}+qf,
\end{aligned}\right.
\end{equation}
and \begin{equation}\label{affAct2}\left\{\begin{aligned}
 x_k\ast g:&=t_k g,\\
 s_r\ast g:&=-\frac{s_r(g)-g}{t_{r+1}-t_r}+s_r(g),
\end{aligned}\right.
\end{equation}

If $w\in\Sym_n$ then the length of $w$ is $$
\ell(w):=\min\{k\in\N|\text{$w=s_{i_1}\dots s_{i_k}$ for some $1\leq i_1,\dots,i_k<n$}\}.
$$
 Let $w_{0,n}$ be the unique longest element in $\Sym_n$. It is well-known that $\ell(w_{0,n})=n(n-1)/2$.
If $w=s_{i_1}\dots s_{i_k}$ with $k=\ell(w)$ then $s_{i_1}\dots s_{i_k}$ is a reduced expression for $w$. In this case, we define $T_w:=T_{i_1}\dots T_{i_k}$. The braid relations (\ref{2}), (\ref{3}) ensure that $T_w$ does not depend on the choice of the reduced expression of $w$.
The following results are well known, see \cite{Mac}.

\begin{lem}\label{affAct1b} Let $w\in\Sym_n$ and $1\leq r\leq n$. Then $$
T_wX_r=X_{w(r)}T_w+\sum_{w>u\in\Sym_n}g_u(X_1,\dots,X_n)T_{u},
$$
where $g_u(X_1,\dots,X_n)\in K[X_1,\dots,X_n]$ for each $u$, ``$>$" is the Bruhat partial order on $\Sym_n$. A similar statement also holds if we replace $T_w, X_r$ by $w, x_r$ respectively.
\end{lem}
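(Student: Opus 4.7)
The plan is to prove the lemma by induction on the length $\ell(w)$ of the permutation $w \in \Sym_n$, with the base case $\ell(w)=0$ being trivial since then $w=1$ and $T_1 X_r = X_r T_1 = X_{1(r)} T_1$. The key computation is the single simple reflection case, which reduces to unwinding relations \eqref{6}, \eqref{7}, and \eqref{7a}. Namely, for each $1 \le i < n$ and $1 \le k \le n$, I would verify that
$$T_i X_k = X_{s_i(k)} T_i + R_k^{(i)}(X_1,\dots,X_n),$$
where $R_k^{(i)}$ is a polynomial in $X_1,\dots,X_n$ (with $R_k^{(i)} = 0$ unless $k \in \{i, i+1\}$). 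The cases $k \neq i, i+1$ follow from \eqref{6}. For $k = i$, relation \eqref{7a} gives $T_i X_i = X_{i+1} T_i - (q-1) X_i$, and noting $X_{i+1} = X_{s_i(i)}$, this has the desired form. For $k = i+1$, I would combine $T_i^2 = (q-1)T_i + q$ with $T_i X_i T_i = q X_{i+1}$ to derive $T_i X_{i+1} = X_i T_i + (q-1) X_{i+1}$, and again $X_i = X_{s_i(i+1)}$.

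For the inductive step, I would pick a reduced expression $w = s_i w'$ with $\ell(w) = \ell(w') + 1$, so $T_w = T_i T_{w'}$. Applying the inductive hypothesis to $w'$ gives
$$T_w X_r = T_i X_{w'(r)} T_{w'} + \sum_{u < w'} T_i\, g_u(X) T_u.$$
The single-reflection identity above converts $T_i X_{w'(r)}$ into $X_{s_i w'(r)} T_i + R(X) = X_{w(r)} T_i + R(X)$, which after multiplying on the right by $T_{w'}$ yields the desired leading term $X_{w(r)} T_w$ plus a polynomial multiple of $T_{w'}$. For the remaining sum, I would move each $T_i$ past $g_u(X)$ using the single-reflection identity repeatedly, which produces terms of the form (polynomial in $X$) $\cdot T_i T_u$ and (polynomial in $X$) $\cdot T_u$.

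The main obstacle is the Bruhat order bookkeeping: I must verify that whenever $u < w'$, the Hecke-algebra reduction of $T_i T_u$ (either $T_{s_i u}$ if $\ell(s_i u) > \ell(u)$, or $(q-1) T_u + q T_{s_i u}$ otherwise) only produces basis elements $T_v$ with $v < w$. This follows from the standard lifting property of the Bruhat order: since $w = s_i w'$ with $s_i w' > w'$ and $u \leq w'$, one has both $u < w$ and $s_i u \le w$ with equality $s_i u = w$ only if $u = w'$, which is excluded. Granting this, collecting terms produces the claimed expression with all non-leading $T_u$ strictly below $w$ in the Bruhat order. For the degenerate statement, the same induction scheme applies verbatim, using in place of the single-reflection identities the relations $s_i x_i = x_{i+1} s_i - 1$ and $s_i x_{i+1} = x_i s_i + 1$ (derived respectively from \eqref{13a} and by left-multiplying \eqref{13} by $s_i$), together with \eqref{12}; the Bruhat-order analysis is identical since the symmetric group structure is the same.
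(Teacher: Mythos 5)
Your proof is correct and takes essentially the same route as the paper, which simply says ``This follows from an induction on $\ell(w)$''; you have supplied the single-reflection identities, the induction step, and the Bruhat-order lifting argument that the paper leaves implicit. One small side remark: the paper's stated relation \eqref{7a} appears to carry a typo (deriving from \eqref{7} gives $X_{i+1}T_i = T_iX_i + (q-1)X_{i+1}$, not $+(q-1)X_i$), but since the lemma only needs the correction terms to be polynomials in the $X$'s, this has no effect on your argument, and indeed your independent derivation of $T_iX_{i+1} = X_iT_i + (q-1)X_{i+1}$ is the correct one.
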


\begin{proof} This follows from an induction on $\ell(w)$.
\end{proof}

\begin{lem} \label{faithful1} The above rules (\ref{affAct1}), (\ref{affAct2}) extend uniquely to a faithful representation $\rho_q$ of $\H_n(q)$ on $\mathcal{P}_n$ as well as a faithful representation $\rho_1$ of $H_n$ on $P_n$.
\end{lem}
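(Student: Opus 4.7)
The plan is to first verify that the prescribed rules satisfy the defining relations of $\H_n(q)$ and $H_n$ (so they give well-defined representations), and then to establish faithfulness by extending the action to the field of rational functions $K(t_1,\dots,t_n)$ and invoking linear independence of the symmetric group acting there as field automorphisms.

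For well-definedness, it is helpful to rewrite the operators as
$$T_r\ast f = \frac{t_{r+1}-qt_r}{t_{r+1}-t_r}\,s_r(f) + \frac{(q-1)t_{r+1}}{t_{r+1}-t_r}\,f, \qquad s_r\ast g = \frac{t_{r+1}-t_r-1}{t_{r+1}-t_r}\,s_r(g) + \frac{1}{t_{r+1}-t_r}\,g,$$
which exhibit them as standard Demazure--Lusztig-type operators. The quadratic and braid relations then reduce to classical identities for such operators, while the Bernstein-type cross relations (\ref{7}) and (\ref{13}) follow, after using $s_r(t_r)=t_{r+1}$, from short rational-function manipulations. The remaining commutativity relations are immediate from the definitions.

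For faithfulness of $\rho_q$: Lemma~\ref{affAct1b} (iterated) implies that every element of $\H_n(q)$ has a unique expression $\xi = \sum_{w\in\Sym_n} p_w(X_1,\dots,X_n)\,T_w$ with $p_w\in K[X_1^{\pm 1},\dots,X_n^{\pm 1}]$. The operators in (\ref{affAct1}) extend verbatim to $K(t_1,\dots,t_n)$, so if $\rho_q(\xi)$ vanishes on $\mathcal{P}_n$ then it vanishes on $K(t_1,\dots,t_n)$. Writing $T_r = A_r(t)\,s_r + B_r(t)$ with $A_r(t)=\frac{t_{r+1}-qt_r}{t_{r+1}-t_r}$ and inducting on $\ell(w)$ along any reduced expression $w=s_{r_1}\cdots s_{r_k}$, one obtains an expansion
$$T_w = A_w(t)\,w + \sum_{u<w} g_{w,u}(t)\,u,$$
where $A_w(t) = A_{r_1}(t)\cdot s_{r_1}(A_{r_2})\cdots s_{r_1}\cdots s_{r_{k-1}}(A_{r_k})$ is a product of nonzero rational functions and hence nonzero. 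Choosing $w_0$ maximal among $\{w : p_w\neq 0\}$, the coefficient of $w_0$ in the image of $\xi$ equals $p_{w_0}(t)\,A_{w_0}(t)$; by linear independence of the field automorphisms $\{w : w\in\Sym_n\}$ of $K(t_1,\dots,t_n)$ over $K(t_1,\dots,t_n)$, it must vanish, forcing $p_{w_0}=0$ --- a contradiction. Hence $\xi=0$. The degenerate case is identical, with $A_r(t)$ replaced by $\frac{t_{r+1}-t_r-1}{t_{r+1}-t_r}$, again a nonzero rational function.

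The only genuine obstacle is the bookkeeping check that $A_w(t)\neq 0$ as a rational function for every $w$. Since composing operators $A\,s$ and $B\,t$ yields $(A\cdot s(B))\,st$, the leading coefficient along a reduced expression is a product of the $A_{r_i}$ factors conjugated by prefixes of the word; each factor is manifestly nonzero, so the product is too, and the argument goes through uniformly in both settings.
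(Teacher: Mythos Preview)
Your argument is sound, and in fact the paper offers no proof to compare with: the lemma is simply stated as well known with a reference to Macdonald's book. So your write-up supplies what the paper omits.

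One small correction is worth making. You write that Lemma~\ref{affAct1b} gives a \emph{unique} expression $\xi=\sum_w p_w(X)T_w$; but that lemma only records the commutation $T_wX_r=X_{w(r)}T_w+(\text{lower})$, which together with the braid relations shows such expressions \emph{span} $\H_n(q)$. Uniqueness is the content of Lemma~\ref{basis2aff}, and invoking it here would risk circularity, since in the usual development faithfulness of $\rho_q$ is precisely what establishes linear independence of that spanning set. Fortunately your argument only needs spanning: once you show that any nonzero expression $\sum_w p_w T_w$ acts nontrivially on $K(t_1,\dots,t_n)$ (via the leading-term analysis and Artin--Dedekind independence of the automorphisms $w$), you have simultaneously proved faithfulness and the basis theorem. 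So replace ``unique expression'' by ``expression'' and the proof stands. The check that $A_w(t)\neq 0$ is exactly as you say: each factor $A_{r_i}$ conjugated by a prefix is a nonzero rational function, hence so is the product.
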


\begin{lem} \label{basis2aff} The following set $$
\bigl\{X_1^{a_1}\dots X_n^{a_n}T_w\bigm|w\in\Sym_n, a_1,\dots,a_n\in\Z\bigr\}
$$
is a $K$-basis of $\H_n(q)$. Similarly, the following set $$
\bigl\{x_1^{a_1}\dots x_n^{a_n}w\bigm|w\in\Sym_n, a_1,\dots,a_n\in\N\bigr\}
$$
is a $K$-basis of $H_n$.
\end{lem}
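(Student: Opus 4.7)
The plan is to prove both statements in parallel by establishing (i) spanning and (ii) linear independence; I focus on $\H_n(q)$, the argument for $H_n$ being entirely analogous, with \eqref{13a} replacing \eqref{7a} and the exponents taking values in $\N$ instead of $\Z$ because the degenerate algebra contains no inverses of the $x_i$.

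For spanning in $\H_n(q)$, any element is a $K$-linear combination of words in the generators $T_i$ and $X_j^{\pm 1}$. I would first use \eqref{4} and \eqref{5} to consolidate every block of $X$-generators into a single monomial of the form $X_1^{a_1}\cdots X_n^{a_n}$ with $\vec{a}\in\Z^n$. Next, using \eqref{6} together with the reformulation \eqref{7a} of \eqref{7} (and its Laurent-inverse analogue, obtained by multiplying on the left and right by $X_{i+1}^{-1}$ and $X_i^{-1}$), I would push every $X_j^{\pm 1}$ past any $T_k$ appearing to its right; iterating this procedure produces expressions of the form $X_1^{a_1}\cdots X_n^{a_n}\,(T_{i_1}\cdots T_{i_k})$. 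This is precisely the pattern captured conceptually by Lemma \ref{affAct1b}. Finally, the braid and quadratic relations \eqref{1}--\eqref{3} reduce any product of $T_i$'s to a $K$-linear combination of $\{T_w\mid w\in\Sym_n\}$, by the standard reduction algorithm for the finite Iwahori--Hecke algebra of $\Sym_n$.

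For linear independence, suppose a finite linear combination $\Xi:=\sum_{w,\vec{a}}c_{w,\vec{a}}X_1^{a_1}\cdots X_n^{a_n}T_w$ vanishes in $\H_n(q)$, and group terms as $\Xi=\sum_{w\in\Sym_n}P_w(X)T_w$ with $P_w\in K[X_1^{\pm 1},\dots,X_n^{\pm 1}]$. By Lemma \ref{faithful1}, $\rho_q(\Xi)=0$ as an operator on $\mathcal{P}_n$. From formula \eqref{affAct1} and induction on $\ell(w)$, one computes that for any $\vec{b}\in\Z^n$ with pairwise distinct entries,
$$T_w\ast t^{\vec{b}}=q^{\ell(w)}\,t^{w(\vec{b})}+\sum_{\vec{c}\prec w(\vec{b})}\alpha^{w,\vec{b}}_{\vec{c}}\,t^{\vec{c}},$$
where ``$\prec$'' is a suitable partial order on $\Sym_n$-orbits of monomials refining the dominance-type ordering produced by iterated divided differences. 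Since each $X^{\vec{a}}$ acts on $\mathcal{P}_n$ as multiplication by $t^{\vec{a}}$, evaluating $\rho_q(\Xi)$ at a sufficiently large family of generic monomials $t^{\vec{b}}$ and performing descending induction on $\ell(w)$ separates the contributions of distinct $w$, forcing each $P_w=0$ and hence every $c_{w,\vec{a}}=0$. The principal technical obstacle is setting up and verifying this triangular leading-term analysis for $\rho_q(T_w)$ carefully via \eqref{affAct1}; once that is in place, the identical recipe using \eqref{affAct2} in place of \eqref{affAct1} (and with the polynomial ring $P_n$ in place of $\mathcal{P}_n$) handles the degenerate case, completing the proof.
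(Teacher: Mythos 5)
The paper states this lemma without proof, citing it as classical (cf. the reference to Macdonald's book just before Lemma~\ref{faithful1}); your argument is the standard one and is essentially correct in outline. The spanning step is fine: relation~\eqref{7a} together with its consequences $T_iX_{i+1}=X_iT_i+(q-1)X_{i+1}$ and $T_iX_i^{-1}=X_{i+1}^{-1}T_i+(q-1)X_{i+1}^{-1}$ (obtained from~\eqref{1} and~\eqref{7}) lets one push every $X_k^{\pm1}$ leftward past each $T_j$ at the cost of strictly shorter $T$-words, and then~\eqref{1}--\eqref{3} reduce any $T$-word to a combination of $T_w$'s.

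The linear-independence step is the right idea but, as written, has an imprecision worth flagging. Your claimed leading coefficient $q^{\ell(w)}$ of $t^{w(\vec b)}$ in $T_w\ast t^{\vec b}$ is not correct for all $\vec b$ with distinct entries: already for $n=2$, if $\vec b=(a,b)$ with $a<b$ the coefficient of $t_1^bt_2^a$ in $T_1\ast t_1^at_2^b$ is $q$, but if $a>b$ that coefficient is $1$. The value depends on whether $\vec b$ is dominant or anti-dominant. Since what you actually need is only that the leading coefficient is a nonzero scalar, this is not fatal, but the undefined partial order ``$\prec$'' carries the whole weight of the triangularity claim and must be made precise (e.g.\ fix $\vec b$ anti-dominant and order the $\Sym_n$-orbit of $\vec b$ by dominance). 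A cleaner variant that sidesteps monomial bookkeeping entirely is to expand the operator $\rho_q(T_w)$ over the field of rational functions as $\rho_q(T_w)=\sum_{u\le w}h_{w,u}(t)\,u$, where each $u\in\Sym_n$ acts on $\mathcal{P}_n$ by permuting variables: writing $\rho_q(T_r)=A_r(t)s_r+B_r(t)$ with $A_r=\frac{t_{r+1}-qt_r}{t_{r+1}-t_r}\neq 0$, the product over a reduced word for $w$ shows $h_{w,u}=0$ unless $u\le w$ in Bruhat order and $h_{w,w}$ is a nonempty product of twisted $A_{r_i}$'s, hence nonzero. If $\sum_w P_w(X)T_w=0$ then $\sum_w P_w(t)h_{w,u}(t)=0$ for every $u$ (by linear independence of distinct field automorphisms over the rational function field), and taking $u$ to be a Bruhat-maximal element of the support gives $P_u h_{u,u}=0$, hence $P_u=0$; descending induction finishes. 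Either route works, but in its present form the key triangularity assertion in your proposal is stated rather than established.
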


Let $\ast$ be the $K$-algebra anti-isomorphism of $\H_n(q)$ which is defined on generators by $T_i^{\ast}:=T_i$, $X_k^{\ast}:=X_k$ for $1\leq i<n, 1\leq k\leq n$. By abuse of notations, we also use $\ast$ to denote
the $K$-algebra anti-isomorphism of $H_n$ which is defined on generators by $s_i^{\ast}:=s_i$, $x_k^{\ast}:=x_k$ for $1\leq i<n, 1\leq k\leq n$. Applying the anti-isomorphism $\ast$, we see that the set $\bigl\{T_wX_1^{a_1}\dots X_n^{a_n}\bigm|w\in\Sym_n, a_1,\dots,a_n\in\Z\bigr\}$
is another $K$-basis of $\H_n(q)$, and the set $\bigl\{wx_1^{a_1}\dots x_n^{a_n}\bigm|w\in\Sym_n, a_1,\dots,a_n\in\N\bigr\}$ is another $K$-basis of $H_n$.

Note that the subalgebra of $\H_n(q)$ generated by $X_1^{\pm 1},\dots,X_n^{\pm 1}$ is canonically isomorphic to the Laurent polynomial $K$-algebra $\mathcal{P}_n$, while the subalgebra of $H_n$ generated by $x_1,\dots,x_n$ is canonically isomorphic to the polynomial $K$-algebra ${P}_n$.

\begin{lem} [Bernstein] \label{center1} The center of $\H_n(q)$ is equal to the set of symmetric Laurent polynomials in $X_1^{\pm 1},\dots,X_n^{\pm 1}$, while the center of $H_n$ is equal to the set of symmetric polynomials in $x_1,\dots,x_n$.
\end{lem}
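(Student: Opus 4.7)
The plan is to prove both inclusions for $\H_n(q)$, with the degenerate case $H_n$ following by the same blueprint once the analogous $s_r$-$x_k$ commutation identities are in hand. The direction ``symmetric implies central'' reduces, by relation \eqref{6} and the fact that the symmetric subalgebra of $\mathcal{P}_n$ is generated over $K[X_j^{\pm1}:j\ne i,i+1]$ by the pair $X_i+X_{i+1}$ and $X_iX_{i+1}$ (as $i$ varies), to checking that each $T_i$ commutes with these two elementary symmetric functions. Starting from $X_{i+1} = q^{-1} T_i X_i T_i$ and the quadratic relation \eqref{1}, a short manipulation (multiplying on the right by $T_i$ and using $T_i^2=(q-1)T_i+q$) gives
\[
X_{i+1}T_i = T_iX_i + (q-1)X_{i+1},
\]
and applying the anti-involution $\ast$ (which fixes every $T_i$ and every $X_k$) yields $T_iX_{i+1} = X_iT_i + (q-1)X_{i+1}$. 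Adding the two cancels the $(q-1)X_{i+1}$ terms and gives $T_i(X_i+X_{i+1})=(X_i+X_{i+1})T_i$; substituting them into $T_iX_iX_{i+1}=(X_{i+1}T_i-(q-1)X_{i+1})X_{i+1}$ and simplifying cancels the $(q-1)X_{i+1}^2$ terms and yields $T_iX_iX_{i+1}=X_iX_{i+1}T_i$.

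For the reverse inclusion, let $z\in Z(\H_n(q))$ and write $z=\sum_{w\in\Sym_n} f_w T_w$ via the standard basis of Lemma~\ref{basis2aff}, with $f_w\in\mathcal{P}_n$. Choose $w_0$ maximal in the Bruhat order with $f_{w_0}\ne 0$. Using Lemma~\ref{affAct1b} to expand $T_wX_j = X_{w(j)}T_w + (\text{lower terms in }T_u,\ u<w)$, the coefficient of $T_{w_0}$ in the identity $zX_j = X_j z$ reads $f_{w_0}X_{w_0(j)} = X_j f_{w_0}$ in $\mathcal{P}_n$. Since $\mathcal{P}_n$ is a commutative domain and $f_{w_0}\ne 0$, this forces $X_{w_0(j)}=X_j$ for every $j$, hence $w_0=1$ and $z=f_1\in\mathcal{P}_n$. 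To see that $f:=f_1$ is symmetric, I would appeal to the faithful polynomial representation $\rho_q$ of Lemma~\ref{faithful1}: on it, $f$ acts as multiplication by $f(t_1,\dots,t_n)$ and $T_i$ acts by the Demazure--Lusztig operator \eqref{affAct1}. Evaluating the operator identity $\rho_q(T_if)=\rho_q(fT_i)$ at $1\in\mathcal{P}_n$, and using $T_i\ast 1 = q$, one obtains
\[
(t_{i+1}-qt_i)\,\frac{s_i(f)-f}{t_{i+1}-t_i} \;=\; 0,
\]
and since $\mathcal{P}_n$ is a domain we conclude $s_i(f)=f$; as $i$ ranges over $1,\dots,n-1$, $f$ is $\Sym_n$-symmetric.

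The degenerate case is formally parallel. The commutation relation \eqref{13a}, $x_{i+1}s_i = s_i x_i + 1$, and its $\ast$-transform $s_ix_{i+1}=x_is_i+1$ imply that $s_i$ commutes with $x_i+x_{i+1}$ and with $x_ix_{i+1}$, yielding centrality of symmetric polynomials; the same leading-term argument in the basis $\{x^a w\}$ of Lemma~\ref{basis2aff} pins any central element to $P_n$; and the faithful representation $\rho_1$ of Lemma~\ref{faithful1}, evaluated at $1$ using $s_r\ast 1=1$ and formula \eqref{affAct2}, gives $(s_i(f)-f)(t_{i+1}-t_i-1)=0$ in $P_n$, forcing $s_i(f)=f$. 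The only mild obstacle I anticipate is bookkeeping the $(q-1)$ cross-terms in the non-degenerate case and checking, via Lemma~\ref{affAct1b}, that the lower-order $T_u$ ($u<w_0$) contributions cannot pollute the coefficient of $T_{w_0}$; both points are routine once one tracks the Bruhat order carefully.
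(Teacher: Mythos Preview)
The paper does not give its own proof of this lemma: it is stated with the attribution ``[Bernstein]'' immediately after the sentence ``The following results are well known, see \cite{Mac},'' and no argument is supplied. So there is nothing in the paper to compare your proof against.

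Your argument is essentially correct and self-contained. One small slip: when you combine the two relations
\[
X_{i+1}T_i = T_iX_i + (q-1)X_{i+1},\qquad T_iX_{i+1} = X_iT_i + (q-1)X_{i+1},
\]
you should \emph{subtract} (not add) to cancel the $(q-1)X_{i+1}$ terms; subtracting and rearranging gives $T_i(X_i+X_{i+1})=(X_i+X_{i+1})T_i$ as you claim. The product computation for $X_iX_{i+1}$ and the degenerate analogues are fine as written. The reverse inclusion via the leading Bruhat term in the basis of Lemma~\ref{basis2aff}, followed by evaluating the operator identity on the faithful polynomial representation at $1$, is a clean and standard route to Bernstein's theorem; your worry about lower-order $T_u$ contributions is unfounded, since maximality of $w_0$ in the support guarantees no such pollution of the $T_{w_0}$-coefficient.
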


Let ${\H}^+_n(q)$ be the $K$-subalgebra of ${\H}_n(q)$ generated by $T_1,\dots,T_{n-1},X_1,\dots,X_n$. Then the following set \begin{equation}\label{pluspart}
\Bigl\{X_1^{a_1}X_2^{a_2}\dots X_n^{a_n}T_w\Bigm|w\in\Sym_n, a_1,\dots,a_n\in\N\Bigr\}
\end{equation}
is a $K$-basis of ${\H}^{+}_n(q)$. Furthermore, for any $(i_1,\dots,i_n)\in I^n$, the following set \begin{equation}\label{pluspart2}
\Bigl\{(X_1-q^{i_1})^{a_1}(X_2-q^{i_2})^{a_2}\dots (X_n-q^{i_n})^{a_n}T_w\Bigm|w\in\Sym_n, a_1,\dots,a_n\in\N\Bigr\}
\end{equation}
is a $K$-basis of ${\H}^{+}_n(q)$ too.

\begin{lem} \label{plusversion} The $K$-algebra ${\H}^+_n(q)$ is isomorphic to the abstract $K$-algebra defined by generators $T_1,\dots,T_{n-1},X_1,\dots,X_{n}$
and relations (\ref{1}), (\ref{2}), (\ref{3}), (\ref{6}), (\ref{7}) together with the relations $X_iX_k=X_kX_i$, $\forall\,1\leq i,k\leq n$.
\end{lem}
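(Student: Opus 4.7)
The plan is to construct a surjection $\phi$ from the abstract algebra $A$ defined by the given presentation onto $\H^+_n(q)$ by sending each generator to itself, and then verify injectivity by comparing a spanning set of $A$ with the basis (\ref{pluspart}) of $\H^+_n(q)$.

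First I would check that $\phi$ is well-defined: the defining relations of $A$ all hold in $\H_n(q)$ as consequences of (\ref{1})--(\ref{7}) together with the commutativity $X_iX_k=X_kX_i$ extracted from (\ref{4}), so they hold in the subalgebra $\H^+_n(q)$. Since the images $T_1,\dots,T_{n-1},X_1,\dots,X_n$ generate $\H^+_n(q)$ by construction, $\phi$ is surjective.

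The main step is to show that $A$ is spanned over $K$ by $\mathcal{B}:=\bigl\{X_1^{a_1}\cdots X_n^{a_n}T_w\bigm|w\in\Sym_n,\, a_i\in\N\bigr\}$. Let $A'$ be the $K$-span of $\mathcal{B}$; it suffices to show $A'$ is stable under right multiplication by every generator. For right multiplication by $T_i$, the braid relations (\ref{2}), (\ref{3}) together with the quadratic relation (\ref{1}) ensure that $T_wT_i$ is a $K$-combination of $T_v$'s (this is the standard Iwahori--Hecke presentation of the subalgebra generated by $T_1,\dots,T_{n-1}$), so $X^{\ba}T_w\cdot T_i\in A'$. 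For right multiplication by $X_j$, I would argue by induction on $\ell(w)$: when $w=1$, use $X_jX_k=X_kX_j$; in general write $T_w=T_uT_i$ with $\ell(u)=\ell(w)-1$, so that $X^{\ba}T_wX_j=X^{\ba}T_u(T_iX_j)$. If $j\neq i,i+1$, use (\ref{6}) to replace $T_iX_j$ by $X_jT_i$; otherwise, working inside $A$ from $T_iX_iT_i=qX_{i+1}$ (which is (\ref{7})) together with $T_i^2=(q-1)T_i+q$, one derives in $A$ the two Bernstein-type identities $T_iX_i=X_{i+1}T_i+(1-q)X_{i+1}$ and $T_iX_{i+1}=X_iT_i+(q-1)X_{i+1}$. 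After either rewrite, the resulting terms are of the form $(X^{\ba'}T_u)\cdot(\text{generator})$, which lie in $A'$ by the inductive hypothesis.

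Finally, $\phi$ maps $\mathcal{B}$ bijectively onto the $K$-basis (\ref{pluspart}) of $\H^+_n(q)$; hence $\mathcal{B}$ is already $K$-linearly independent in $A$, making $\phi$ a $K$-algebra isomorphism. The main obstacle is the normal-form argument in the spanning step: one must verify, using only the given relations and crucially without any appeal to $X_i^{-1}$ (which is not available in the presentation of $A$), that every product $T_iX_j$ can be rewritten to push $X$-generators leftward while keeping the $T$-degree under control, so that the induction on $\ell(w)$ closes properly.
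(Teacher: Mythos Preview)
Your proposal is correct. The difference from the paper's proof lies in the linear-independence step: the paper constructs a polynomial representation $\rho_q^+$ of the abstract algebra $\H^+$ on $P_n=K[t_1,\dots,t_n]$ (via $X_k\cdot f=t_kf$ and the Demazure--Lusztig formula for $T_r$) and uses it to check directly that the monomials of the form (\ref{pluspart}) are linearly independent in $\H^+$; you instead pull linear independence back through the surjection $\phi$ from the already-known basis (\ref{pluspart}) of the subalgebra $\H^+_n(q)\subset\H_n(q)$. Your route is a bit more economical, since it avoids re-verifying any representation relations for $\H^+$, while the paper's route has the side benefit of producing a faithful polynomial representation of $\H^+_n(q)$, which is invoked again later (after Corollary~\ref{nozerodivisor}). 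Both arguments rest on the same spanning-set calculation, which you spell out carefully (including the derivation of the two Bernstein identities from (\ref{7}) and the quadratic relation, without using $X_i^{-1}$), whereas the paper simply declares that step ``easy to see''.
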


\begin{proof} Let $\H^+$ be the abstract $K$-algebra which is defined by generators $T_1,\dots,T_{n-1}$, $X_1,\dots,X_{n}$
and relations (\ref{1}), (\ref{2}), (\ref{3}), (\ref{6}), (\ref{7}) together with the relations $X_iX_k=X_kX_i$, $\forall\,1\leq i,k\leq n$. To prove that $\H^+\cong {\H}^+_n(q)$, it suffices to show that
the elements in $\H^+$ which are of the form  (\ref{pluspart}) form a $K$-basis of $\H^+$.

It is easy to see that the elements in $\H^+$ which are of the form  (\ref{pluspart}) generates $\H^+$ as a $K$-linear space. Moreover, the following formulae $$
X_k\cdot f:=t_k f,\quad\,\, T_r\cdot f:=\bigl(t_{r+1}-qt_r\bigr)\frac{s_r(f)-f}{t_{r+1}-t_r}+qf,
$$  also defines a representation $\rho^+_q$ of $\H^+$ on $P_n$. Using this representation $\rho^+_q$ it is easy to check  that the elements in $\H^+$ which are of the form  (\ref{pluspart}) are $K$-linearly independent and hence form a $K$-basis of $\H^+$ and
$\rho^+_q$ is a faithful polynomial representation of $\H^+\cong {\H}^+_n(q)$.
\end{proof}

\begin{cor} \label{centerPosi1} The center $Z(\H^+_n(q))$ of $\H^+_n(q)$ is equal to the set of symmetric polynomials in $X_1,\dots,X_n$.
\end{cor}

\begin{dfn}\label{D:QuiverRelations}
  Suppose $\beta\in Q_n^{+}$. Define {\bf the type $A$ quiver Hecke algebra} $\mathscr{R}_\beta$ to be the unital associative $K$-algebra with generators
  $$\{\psi_1,\dots,\psi_{n-1}\} \cup   \{ y_1,\dots,y_n \} \cup \set{e(\bi)|\bi\in I^\beta} $$
  and relations
  \bgroup
      \setlength{\abovedisplayskip}{1pt}
      \setlength{\belowdisplayskip}{1pt}
  \begin{align*}
         e(\bi) e(\bj) &= \delta_{\bi\bj} e(\bi),
    & \sum_{\bi\in I^\beta}e(\bi)=1,& & &\\
    y_r e(\bi) &= e(\bi) y_r,
    &\psi_r e(\bi)&= e(s_r\bi) \psi_r,
    &y_r y_s &= y_s y_r,
  \end{align*}
  \begin{align*}
    \psi_r y_{r+1} e(\bi)&=(y_r\psi_r+\delta_{i_ri_{r+1}})e(\bi),&
    y_{r+1}\psi_re(\bi)&=(\psi_r y_r+\delta_{i_ri_{r+1}})e(\bi),\\
    \psi_r y_s  &= y_s \psi_r,&&\text{if }s \neq r,r+1,\\
    \psi_r \psi_s &= \psi_s \psi_r,&&\text{if }|r-s|>1,\notag
\end{align*}
\begin{align*}
  \psi_r^2e(\bi) &= \begin{cases}
       0,&\text{if }i_r = i_{r+1},\\
       (y_{r+1}-y_r)e(\bi),&\text{if  }i_r\rightarrow i_{r+1},\\
       (y_r - y_{r+1})e(\bi),&\text{if }i_r\leftarrow i_{r+1},\\
       (y_{r+1} - y_{r})(y_{r}-y_{r+1}) e(\bi),&\text{if }i_r\rightleftarrows i_{r+1}\\
      e(\bi),&\text{otherwise},
\end{cases}\\
\psi_{r}\psi_{r+1} \psi_{r} e(\bi) &= \begin{cases}
    (\psi_{r+1} \psi_{r} \psi_{r+1} +1)e(\bi),\hspace*{7mm} &\text{if }i_r=i_{r+2}\rightarrow i_{r+1} ,\\
  (\psi_{r+1} \psi_{r} \psi_{r+1} -1)e(\bi), &\text{if }i_r=i_{r+2}\leftarrow i_{r+1},\\
  \rlap{$\big(\psi_{r+1} \psi_{r} \psi_{r+1} +y_r -2y_{r+1}+y_{r+2}\big)e(\bi)$,}\\
           &\text{if }i_r=i_{r+2} \rightleftarrows i_{r+1},\\
  \psi_{r+1} \psi_{r} \psi_{r+1} e(\bi),&\text{otherwise.}
\end{cases}
\end{align*}
\egroup
for $\bi,\bj\in I^\beta$ and all admissible $r$ and $s$.
\end{dfn}

Let $\R[\beta]$ be the quotient of $\mathscr{R}_\beta$ by the two-sided ideal generated by \begin{equation}\label{cyclotomic22}
y_1^{\<\Lambda,\alpha_{i_1}^\vee\>}e(\bi),\,\,\,\bi\in I^\beta .
\end{equation}
The algebra $\R[\beta]$ is called the type $A$ {\bf cyclotomic quiver Hecke algebra} associated to $\beta$ and $\Lambda$.

Let $\bi\in I^n$ and $r$ be an integer with $1\leq r<n$. Recall the definition of $P_r(\bi)$ given in \cite[(3.22), (4.27)]{BK:GradedKL}: if $i_r=i_{r+1}$ then
$P_r(\bi)=1$; if $i_r\neq i_{r+1}$ and in the non-degenerate setting, then \begin{equation}\label{Pri}
P_r(\bi):=\frac{1-q}{1-q^{i_r-i_{r+1}}}\Biggl(1+\frac{y_r-y_{r+1}}{1-q^{i_{r+1}-i_r}}+\sum_{k\geq 1}\frac{y_r-y_{r+1}}{1-q^{i_{r+1}-i_r}}
\Bigl(\frac{q^{i_{r+1}}y_{r+1}-q^{i_r}y_r}{q^{i_{r+1}}-q^{i_r}}\Bigr)^k\Biggr) ;
\end{equation}
while if $i_r\neq i_{r+1}$ and in the degenerate setting, then \begin{equation}\label{Pri2}
P_r(\bi):=\frac{1}{i_r-i_{r+1}}\Biggl(1+\sum_{k\geq 1}\Bigl(\frac{y_r-y_{r+1}}{i_{r+1}-i_r}\Bigr)^k\Biggr).
\end{equation}
The Brundan--Kleshchev's isomorphism in Theorem \ref{BKiso} between $\H_{\beta}^{\Lambda}$ and $\R[\beta]$ depends on the choice of certain polynomials $Q_r(\bi)$ for $1\leq r<n$. see \cite[(3.27--3.29), (4.33-4.35)]{BK:GradedKL}. Instead of following Brundan--Kleshchev's choice given in \cite[(3.30), (4.36)]{BK:GradedKL}, we make a different choice for our purpose. In the degenerate setting, we set
\begin{equation}\label{qri1}
Q_r(\bi):=\begin{cases} 1+y_{r+1}-y_r, &\text{if $i_{r+1}=i_r$;}\\
1+\sum_{k\geq 1}(y_{r+1}-y_r)^k, &\text{if $i_r=i_{r+1}+1$;}\\
P_r(\bi)-1, &\text{if $i_r\neq i_{r+1},i_{r+1}+1$.}
\end{cases}.
\end{equation}
In the non-degenerate setting, following Stroppel--Webster \cite[(27)]{StroppelWebster:QuiverSchur}, we set \begin{equation}\label{qri2}
Q_r(\bi):=\begin{cases} 1-q+q y_{r+1}-y_r, &\text{if $i_{r+1}=i_r$;}\\
\frac{1}{1-q^{-1}}\bigl(1+\sum_{k\geq 1}(\frac{y_{r+1}-qy_r}{1-q})^k\bigr), &\text{if $i_r=i_{r+1}+1$;}\\
P_r(\bi)-1, &\text{if $i_r\neq i_{r+1},i_{r+1}+1$.}
\end{cases}.
\end{equation}
Note that in both (\ref{qri1}) and (\ref{qri2}), \begin{equation}\label{Qri}
\text{$Q_r(\bi)=\frac{P_r(\bi)-1}{y_r-y_{r+1}}$ whenever $i_r=i_{r+1}+1$}.
\end{equation}
Since $y_1,\dots,y_n$ are nilpotent elements in $\H_{\beta}^{\Lambda}$ (cf. \cite[Lemma 2.1]{BK:GradedKL}), the sums in (\ref{Pri}), (\ref{Pri2}), (\ref{qri1}) and (\ref{qri2}) are always finite sums. One can verify that the definitions in both (\ref{qri1}) and (\ref{qri2}) satisfy the requirement in  \cite[(3.27--3.29), (4.33--4.35)]{BK:GradedKL}. Thus they can be used to define
Brundan--Kleshchev's isomorphism in Theorem \ref{BKiso}. \smallskip

Henceforth, we shall use these specific choices of Brundan--Kleshchev's isomorphisms to identify $\H_{\beta}^{\Lambda}(q)$ and $\R[\beta]$ in the non-degenerate setting and to
identify $H_{\beta}^{\Lambda}$ and $\R[\beta]$ in the degenerate setting.

\bigskip
\section{The modified forms of affine Hecke algebras and their generalized Ore localization}

The purpose of this section is to introduce the modified forms for both the non-degenerate and the degenerate affine Hecke algebras of type $A$ and to construct their standard bases and faithful representations. The construction will make use of inverse limit of cyclotomic Hecke algebras and certain generalized Ore localizations with respect to some multiplicatively closed subsets (cf. Appendix A).

Let $m\in\N$. A partition of $m$ is a weakly decreasing sequence $\lambda=(\lambda_1\ge\lambda_2\ge\dots)$ of non-negative integers such
that $|\lambda|=\sum_i\lambda_i=m$. The diagram $[\lambda]$ of a partition $\lambda=(\lambda_1,\lambda_2,\dots)$ is the set $\{(i,j)|i\geq 1, 1\leq j\leq\lambda_i\}$, which is regarded as an array of nodes, or boxes, arranged in left-justified rows, with the row sizes weakly decreasing in the plane. For example, if $n=7$, $\lam=(4,2,1)$, then
$$[\lambda]=[(4,2,1)]=\YoungDiagram{4,2,1}.$$

A multipartition, or $\ell$-partition, of~$n$ is an ordered $\ell$-tuple
$\blam=(\lambda^{(1)},\dots,\lambda^{(\ell)})$ of partitions such that $|\blam|=|\lambda^{(1)}|+\dots+|\lambda^{(\ell)}|=n$. In this case, we write $\blam\vdash n$. The diagram of $\blam$ is the set
\[[\blam]=\set{(k,r,c)|r\ge 1, 1\le c\le\lambda^{(k)}_r \text{ and }1\le k\le \ell},\]
which is regarded as the ordered $\ell$-tuple of the diagrams of its components. For example, if $n=12, \ell=3$, $\blam=((2,1), (1^2), (4,2,1))$, then
$$[\blam]=[((2,1), (1^2), (4,2,1))]=\TriDiagram(2,1|1,1|4,2,1).$$

A standard $\blam$-tableau is a map
$\t\map{[\blam]}\{1,2,\dots,n\}$ such that for $s=1,\dots,\ell$ the
entries in each row of $\t^{(s)}$ increase from left to right and the
entries in each column of $\t^{(s)}$ increase from top to bottom.  Let
$\Std(\blam)$ be the set of standard $\blam$-tableaux. Let $\t^{\blam}$ be the standard $\blam$-tableau such that the numbers $1,2,\dots,n$ are entered
in order from left to right along the rows of $\t^{\lam^{(1)}}$, and then $\t^{\lam^{(2)}},\dots,\t^{\lam^{(\ell)}}$. For example, in the example of the last paragraph, $$
\t^{\blam}=\Tritab({1,2},{3}|{4},{5}|{6,7,8,9},{10,11},{12}) .
$$

Let $\blam$ be a multipartition of $n$. For any $1\leq k\leq n$ and $\t\in\Std(\blam)$, we define $$
\res_{\t}(k):=\res(\gamma):=\kappa_l+c-r\in\Z/e\Z ,
$$
whenever the node occupied by the integer $k$ in $\t$ is $\gamma:=(l,r,c)\in[\blam]$. For any $\t\in\Std(\blam)$, we define the residue sequence $\bi^\t$ of $\t$ to be the ordered $n$-tuple $(\res_\t(1),\dots,\res_\t(n))\in I^n$. For simplicity, we denote by $\bi^\blam$ the residue sequence of $\t^\blam$.

{\bf Henceforth, we shall fix $\beta:=\sum_{i\in I}k_i\alpha_i\in Q_n^+$} and $\bi=(i_1,\dots,i_n)\in I^\beta$. We define \begin{equation}\label{Ibeta}
I(\beta):=\bigl\{i\in I\bigm|k_i\neq 0\bigr\} .
\end{equation}
It is clear that $I(\beta)$ is a finite subset of $I$.

Let $\HH_n\in\{\HH_n(q), H_n\}$. For any $\Lambda\in P^+$ with level $\ell$, let $\HH_n^{\Lam}\in\{\HH_n^{\Lam}(q), H_n^{\Lam}\}$. Let $\pi^\Lam: \HH_n\twoheadrightarrow\HH_n^\Lam$ be the canonical surjection. Following \cite[\S3.1]{Li},  for any $1\leq r\leq n$ and any $j\in I(\beta)$ with $j\neq i_r$, we choose $N>\dim\HH_n^\Lam=\ell^n n!$, and define $$
L^\Lam_{i_r,j,N}:=\begin{cases}1-\Bigl(\frac{q^{i_r}-L_r}{q^{i_r}-q^j}\Bigr)^N, &\text{if $q\neq 1$;}\\
1-\Bigl(\frac{i_r-L_r}{i_r-j}\Bigr)^N, &\text{if $q=1$.}\end{cases},\quad
X_{i_r,j,N}:=\begin{cases}1-\Bigl(\frac{q^{i_r}-X_r}{q^{i_r}-q^j}\Bigr)^N, &\text{if $q\neq 1$;}\\
1-\Bigl(\frac{i_r-x_r}{i_r-j}\Bigr)^N, &\text{if $q=1$.}\end{cases} $$
and \begin{equation}\label{LrXrN}
L^\Lam_{r,N}(\bi):=\prod_{i_r\neq j\in I(\beta)}L^\Lam_{i_r,j,N}\in\H_n^\Lam,\quad X_{r,N}(\bi):=\prod_{i_r\neq j\in I(\beta)}X_{i_r,j,N}\in\H_n .
\end{equation}
The key point in the above definition lies in that $X_{r,N}(\bi)$ depends only on $r, \bi$ and $N>\ell^n n!$ but not on $\Lambda$.

For any $\Lambda,\Lam'\in P^+$, we define $\Lam'>\Lam$ whenever $\Lam'-\Lam\in P^+$. Then $(P^+,>)$ becomes a directed poset (i.e., a set $S$ with a partial order ``$\leq$" such that for any $a,b\in S$ there exists $c\in S$ satisfying $a\leq c$ and $b\leq c$). If $\Lam'>\Lam$ in $P^+$, then there is a canonical surjective homomorphism $$\pi^{\Lam',\Lam}: \H_n^{\Lambda'}\twoheadrightarrow\H_n^{\Lambda}$$ such that
$\pi^{\Lam}=\pi^{\Lam',\Lam}\circ\pi^{\Lam'}$. Let $\pi_\Lam: \underset{\rm{M}}{\underleftarrow{\lim}}\,\H_n^{\rm{M}}\rightarrow\H_n^\Lam$ be the canonical map and $\check{\pi}: \HH_n\rightarrow \underset{\rm{M}}{\underleftarrow{\lim}}\,\H_n^{\rm{M}}$ be the induced map.
Then $\pi^\Lam=\pi_\Lam\circ\check{\pi}$.

\begin{dfn} \label{checkdfn} We define $\check{\H}_n:=\image(\check{\pi})$ to be the image of $\H_n$ (under $\check{\pi}$) in $\underset{\Lambda}{\underleftarrow{\lim}}\,\H_n^\Lambda$. For each $1\leq k<n, 1\leq r\leq n$, we set $$
\hat{T}_k:=\check{\pi}(T_k),\,\,\hat{X}_r:=\check{\pi}(X_r),\,\,\hat{s}_k:=\check{\pi}(s_k),\,\,\hat{x}_r:=\check{\pi}(x_r).
$$
\end{dfn}

Note that for any
$z\in\underset{\Lambda}{\underleftarrow{\lim}}\,\H_n^\Lambda$, \begin{align}\label{z0Lam}
\text{$z=0$ in $\underset{\Lambda}{\underleftarrow{\lim}}\,\H_n^\Lambda$ if and only if $\pi_\Lam(z)=0$ in $\H_n^\Lam$ for all $\Lam\in P^+$.}
\end{align}

\begin{lem} \label{AffInj} Let $Z\in\HH_n$. Then

1) $Z=0$ if and only if $\pi^\Lam(Z)=0$ for any $\Lam\in P^+$.

2) The canonical map $\check{\pi}: \HH_n\rightarrow \underset{\Lambda}{\underleftarrow{\lim}}\,\H_n^\Lambda$ is injective.
\end{lem}

\begin{proof} It is clear that 2) follows from 1). It suffices to prove 1). We only consider the non-degenerate case as the degenerate case is similar. For 1), the ``only if" part is clear. It remains to consider the ``if" part. Suppose that $Z\neq 0$ and $\pi^\Lam(Z)=0$ for all $\Lam\in P^+$. Without loss of generality, we can assume that $Z\in {\H}_n^+(q)$. We can choose a sufficiently large integer $m>\ell(w_{0,n})=n(n-1)/2$ such that \begin{equation}\label{Zdescrip0}
Z\in \text{$K$-Span}\Biggl\{(X_1-1)^{a_1}\dots(X_n-1)^{a_n}T_u\Biggm|\begin{matrix}\text{$u\in\Sym_n, a_1,\dots,a_n\in\N$,}\\
\text{$\sum_{i=1}^{n}a_i<m-n(n-1)/2$.}\end{matrix}\Biggr\} .
\end{equation}
Now we take $\Lam=m\Lam_0$. Since $\pi^\Lam(Z)=0$, it follows that $Z$ lies in the two-sided ideal of ${\H}_n^+(q)$ generated by $(X_1-1)^m$. Using the commutator relations (\ref{6}), (\ref{7a}) and (\ref{pluspart2}), we can deduce that   $$
Z\in \text{$K$-Span}\Biggl\{(X_1-1)^{b_1}\dots(X_n-1)^{b_n}T_u\Biggm|\begin{matrix}\text{$u\in\Sym_n, b_1,\dots,b_n\in\N$,}\\
\text{$\sum_{i=1}^{n}b_i\geq m-n(n-1)/2$.}\end{matrix}\Biggr\} .
$$
We get a contradiction. This proves 1).
\end{proof}

\begin{lem} \label{keyGeLilemma} Let $\bi\in I^\beta$ and $\Lambda\in P^+$ with level $\ell$. Then for any $N>\ell^n n!$, we have that $$
e(\bi)=\prod_{r=1}^{n}(L^\Lam_{r,N}(\bi))^N\in\HH_n^\Lam. $$
Moreover, if we set \begin{equation}\label{EnLam}
E_{N}(\bi):=\prod_{r=1}^{n}(X_{r,N}(\bi))^N\in\H_n ,
\end{equation}
then $\pi^\Lam(E_{N}(\bi))=e(\bi)$. Furthermore, for any sufficiently large $N$,  we have that \begin{equation}\label{ENI}
E_{N}(\bi)=\begin{cases} c\prod_{r=1}^n\prod_{q^{i_r}\neq z\in K_0}(X_r-z)^{b_{r,z}},&\text{if $q\neq 1$;}\\
c\prod_{r=1}^n\prod_{{i_r}\neq z\in K_0}(x_r-z)^{b_{r,z}},&\text{if $q=1$,}\end{cases}
\end{equation}
where $c\in K^\times$, $K_0$ is a finite subset of the algebraic closure $\overline{K}$ of $K$ and $b_{r,z}\in\N$ for each pair $(r,z)$.
\end{lem}

\begin{proof} The proof of the first part of the lemma is essentially the same as the proof of \cite[Corollary 3.9]{Li}. Note that Ge Li \cite[Corollary 3.9]{Li} use the set $I$ instead of the finite subset $I(\beta)$ in the definition of $L_r(\bi)$ and he consider only the case when $e>0$ (so that $|I|$ is finite). However, by \cite[Lemma 4.1, Theorem 5.8]{HuMathas:GradedCellular}, we know that $e(\bi)\neq 0$ in $\HH_n^\Lam$ if and only if $\bi=\bi^\t$ for some $\t\in\Std(\blam)$ and $\blam=(\lam^{(1)},\dots,\lam^{(\ell)})\vdash n$. Therefore, the same argument used in the proof of \cite[Corollary 3.9]{Li} apply equally well to the proof of the current lemma for any $e$ and with $I$ replaced by $I(\beta)$, and we can replace the sufficiently large integers $N_j, N_r(\bi)$ in the proof of \cite[Corollary 3.9]{Li} by any $N>\ell^n n!$. This proves the first part of the lemma.

It remains to the prove the second part of the lemma. Let $\bi=(i_1,\dots,i_n)\in I^\beta$ be a fixed $n$-tuple and $\Lam\in P^+$ be a fixed dominant weight with level $\ell$. For any $\Lam'\in P^+$ with level $\ell'$, if $\Lam'\geq\Lam_{i_1}+\dots+\Lam_{i_n}$ then $\bi=\bi^\t$ for some $\t\in\Std(\blam)$ and $\blam=(\lam^{(1)},\dots,\lam^{(\ell')})\vdash n$. In fact, one can take for example $\t$ to be a standard $\blam$-tableau such that each component $\lambda^{(j)}$ of $\blam$ is equal to either $\emptyset$ or $(1)$. In particular, by \cite[Lemma 4.1, Lemma 5.2, Theorem 5.8]{HuMathas:GradedCellular}, $e(\bi)\neq 0$ in $\HH_n^{\Lam'}$. Now we take $\Lam'\in P^+$ with level $\ell'$ such that $\Lam<\Lam'\geq\Lam_{i_1}+\dots+\Lam_{i_n}$. Note that the canonical map $\pi^{\Lam',\Lam}$ sends the idempotent $e(\bi)\in \HH_n^{\Lam'}$ to the element $e(\bi)\in \HH_n^{\Lam}$. For any $N>(\ell')^n n!$, we consider the nonzero idempotent $e(\bi)=\pi^{\Lam'}(E_N(\bi))\in \HH_n^{\Lam'}$. We have  $$
0\neq e(\bi)=e(\bi)^2=\pi^{\Lam'}(E_N(\bi))e(\bi)=\prod_{r=1}^{n}\pi^{\Lam'}(X_{r,N}(\bi))^{N}e(\bi)\in \HH_n^{\Lam'} .
$$
Note that $(L_r-q^{i_r})^ke(\bi)=0$ and $(L_r-i_r)^ke(\bi)=0$ holds in $\HH_n^{\Lam'}$ for any $k>(\ell')^n n!$. It follows that for any $N>(\ell')^n n!>l^n n!$, $E_{N}(\bi)$ can not contain $(X_r-q^{i_r})$ or $(x_r-i_r)$ as a factor so that it  must have the desired decomposition as in (\ref{ENI}). This proves the second part of the lemma.
\end{proof}

Now for any $\widetilde{\Lam}\in P^+$ with $\widetilde{\Lam}>\Lam$, the dominant weight $\widetilde{\Lam}$ has level $\widetilde{\ell}>\ell$. To avoid confusion, we put a tilde on the notation of every generator of the algebra $\mathscr{H}_n^{\widetilde{\Lam}}$. We take an integer $\widetilde{N}$ such that $\widetilde{N}>\widetilde{\ell}^n n!$. Then $\widetilde{N}>\ell^n n!$. Since $$
\pi^{\widetilde{\Lam},\Lam}(\widetilde{L}^{\widetilde{\Lam}}_{r,\widetilde{N}}(\bi)^{\widetilde{N}})=L^\Lam_{r,\widetilde{N}}(\bi)^{\widetilde{N}} ,
$$
it follows from Lemma \ref{keyGeLilemma} that $\pi^{\Lam,\widetilde{\Lam}}$ sends the idempotent $\widetilde{e}(\bi)$ of $\mathscr{H}_n^{\widetilde{\Lam}}$ to the idempotent $e(\bi)$ of $\mathscr{H}_n^\Lam$. As a result, we can make the following definition.

\begin{dfn} \label{HEidemp} For each $\bi\in I^\beta$, let $\hat{e}(\bi)$ be the unique idempotent in $\underset{\Lambda}{\underleftarrow{\lim}}\,\H_n^\Lambda$ such that
$\pi_\Lam(\he(\bi))=e(\bi)\in\HH_n^\Lam$ for any $\Lam\in P^+$.
\end{dfn}

Let $\bi=(i_1,\dots,i_n)\in I^\beta$ be a fixed $n$-tuple. As we remarked in the proof of Lemma \ref{keyGeLilemma}, for any $\Lam\in P^+$ with level $\ell$, if $\Lam\geq\Lam_{i_1}+\dots+\Lam_{i_n}$ then $\bi=\bi^\t$ for some $\t\in\Std(\blam)$ and $\blam=(\lam^{(1)},\dots,\lam^{(\ell)})\vdash n$. In particular, $e(\bi)\neq 0$ in $\HH_n^\Lam$ and hence $\he(\bi)\neq 0$.

\begin{lem}\label{zeiKeylem} Let $\bi=(i_1,\dots,i_n)\in I^\beta$. Then $\he(\bi)\neq 0$. Moreover, for any $z\in\check{\H}_n$, if $\he(\bi)z=0$ or $z\he(\bi)=0$ in $\underset{\Lambda}{\underleftarrow{\lim}}\,\H_n^\Lambda$, then $z=0$.
\end{lem}

\begin{proof} We have prove the first part of the lemma in the paragraph above this lemma. It remains to prove the second part of this lemma.
We only prove the statement for the non-degenerate case as the degenerate case is similar.

Suppose that $\he(\bi)z=0$. Without loss of generality we can assume that $z\in\check{\pi}\bigl({\H}_n^+(q)\bigr)$. We fix an $Z\in{\H}_n^+(q)$ such that $\check{\pi}(Z)=z$. Suppose that $z\neq 0$. Then it is clear that $Z\neq 0$.

Since $Z$ is prefixed, we can choose a sufficiently large integer $m$ (relative to the fixed integer $n$) such that \begin{equation}\label{Zdescrip}
Z\in \text{$K$-Span}\Biggl\{(X_1-q^{i_1})^{a_1}\dots(X_n-q^{i_n})^{a_n}T_u\Biggm|\begin{matrix}\text{$u\in\Sym_n, a_1,\dots,a_n\in\N$,}\\
\text{$\sum_{i=1}^{n}a_i<m-n(n-1)/2$.}\end{matrix}\Biggr\} .
\end{equation}

We now take $\Lam:=m\Lam_{i_1}+\dots+m\Lam_{i_n}\in P^+$ with level $\ell:=mn$. By construction, for any sufficiently large integer $N>\ell^n n!$, we have that  \begin{equation}\label{EnLam2}
E_{N}(\bi)= c\prod_{r=1}^n\prod_{q^{i_r}\neq z\in K_0}(X_r-z)^{b_{r,z}} ,
\end{equation}
where $c\in K^\times$, $K_0$ is a finite subset of the algebraic closure $\overline{K}$ of $K$ and $b_{r,z}\in\N$ for each pair $(r,z)$.
The assumption $q^{i_r}\neq z\in K_0$ in the above equality (\ref{EnLam2}) implies that there exists some $c_1,\dots,c_n\in\N$ satisfying $\sum_{i=1}^{n}c_i<m-n(n-1)/2$ and some $w\in\Sym_n$ such that the term $(X_1-q^{i_1})^{c_1}\dots(X_n-q^{i_n})^{c_n}T_w$ occurs with nonzero coefficient in $E_{N}(\bi)Z$ when it is expressed as a $K$-linear combination of the basis elements of the form $$
\biggl\{(X_1-q^{i_1})^{a_1}\dots(X_n-q^{i_n})^{a_n}T_u\biggm|\text{$u\in\Sym_n, a_1,\dots,a_n\in\N$}\biggr\}.
$$

On the other hand, by assumption, $\he(\bi)z=0$ in $\underset{\Lambda}{\underleftarrow{\lim}}\,\H_n^\Lambda$, which implies that $\pi^\Lam(E_{N}(\bi)Z)=e(\bi)\pi^\Lam(Z)=0$ in $\HH_n^\Lam$ by Lemma \ref{keyGeLilemma}. Thus $E_{N}(\bi)Z\in\Ker{\pi}^\Lam$ and hence $E_{N}(\bi)Z$ lies in the two-sided ideal of $\HH_n^+(q)$ generated by $$
(X_1-q^{i_1})^{m}(X_1-q^{i_2})^{m}\dots (X_1-q^{i_n})^{m}.$$

We claim that $E_{N}(\bi)Z$ lives inside the following $K$-subspace:  $$
\text{$K$-Span}\Biggl\{(X_1-q^{i_1})^{b_1}\dots(X_n-q^{i_n})^{b_n}T_u\Biggm|\begin{matrix}\text{$u\in\Sym_n, b_1,\dots,b_n\in\N$,}\\
\text{$\sum_{i=1}^{n}b_i\geq m-n(n-1)/2$.}\end{matrix}\Biggr\} .
$$
In fact, by the last paragraph, it suffices to show that for any $w\in\Sym_n$, $T_w(X_1-q^{i_1})^{m}(X_1-q^{i_2})^{m}\dots (X_1-q^{i_n})^{m}$ lies in the above $K$-subspace. In fact, this follows from an induction on $n$ (by writing $T_w$ as $T_uT_{n-1}\cdots T_k$ for some $u\in\Sym_{n-1}$ and $1\leq k<n$ if $w\notin\Sym_{n-1}$) and the commutator relations (\ref{6}), (\ref{7a}) and (\ref{pluspart2}). This proves our claim. However, this claim contradicts to the last sentence in the second last paragraph. Hence we must have that $z=0$.

Finally, if $z\he(\bi)=0$, the lemma follows from a similar argument.
\end{proof}

\begin{dfn} \label{checkBeta} In the non-degenerate setting, we define $\check{\H}_\beta(q)$ to be the $K$-subalgebra of $\underset{\Lambda}{\underleftarrow{\lim}}\,\H_n^\Lambda(q)$ generated by the following elements: $$
\hT_k\he(\bi),\,\, \hX_r^{\pm 1}\he(\bi),\,\,\he(\bi),\,\,\,\bi\in I^\beta,\,1\leq k<n,\,1\leq r\leq n .
$$
In the degenerate setting, we define $\check{H}_\beta$ to be the $K$-subalgebra of $\underset{\Lambda}{\underleftarrow{\lim}}\,H_n^\Lambda$ generated by the following elements: $$
\hs_k\he(\bi),\,\, \hx_r\he(\bi),\,\,\he(\bi),\,\,\,\bi\in I^\beta,\,1\leq k<n,\,1\leq r\leq n .
$$
\end{dfn}

Let $\check{\H}_\beta\in\{\check{\H}_\beta(q),\check{H}_\beta\}$. We set $\he(\beta):=\sum_{\bi\in I^\beta}\he(\bi)\in\check{\H}_\beta$. Then $\he(\beta)$ is the identity element of $\check{\H}_\beta$. By the definition of inverse limit, it is clear that $$
\he(\beta)\he(\gamma)=\delta_{\beta,\gamma}\he(\beta),\quad\text{for any $\gamma\in Q_n^+$.}  $$

To simplify notations, we shall abbreviate $\hX_k\he(\beta), \hx_k\he(\beta), \hT_r\he(\beta),\hs_r\he(\beta)$ as $\hX_k, \hx_k, \hT_r,\hs_r$ when it causes no confusion.

\begin{lem}[Non-degenerate cases] \label{modifiedformV1} In the non-degenerate case, the following relations hold: \begin{align}
\hX_k^{\pm 1}\he(\bi)=\he(\bi)\hX_k^{\pm 1},\,\,\he(\bi)\he(\bj)=\delta_{\bi\bj}\he(\bi), \quad\text{if $1\leq k\leq n, \bi,\bj\in I^\beta$,} \label{XkEi}
\end{align}
\begin{align}
\left.\begin{matrix}
\he(\bi)\hT_r(\hX_{r+1}-\hX_r)\he(\bi)=(q-1)\he(\bi)\hX_{r+1}\he(\bi),\\
\he(\bi)\hT_r\hX_{r}\he(\bi)=\he(\bi)\hX_r\hT_r\he(\bi),\\
\he(\bi)\hT_r\hX_{r+1}\he(\bi)=\he(\bi)\hX_{r+1}\hT_r\he(\bi),\end{matrix}\right\}
\quad \text{if}\,\,\, \bi\in I^{\beta}, i_r\neq i_{r+1}, \label{TX3}
\end{align}
\begin{align}
\he(\bi)f \he(\bj)=0,\,\quad \text{if}\,\,\, \bi,\bj\in I^{\beta}, \bi\neq\bj, f\in K[\hX_1^{\pm1},\dots,\hX_n^{\pm 1}], \label{FEI}
\end{align}
\begin{align}
\he(\bi)\hT_r\he(\bj)=0,\,\quad \text{if}\,\,\, \bi,\bj\in I^{\beta}, \bi\notin\{\bj,s_r\bj\},  \label{TREI}
\end{align}
\begin{align}
\left.\begin{matrix}
\he(\bi)(\hT_r-q)(\hT_r+1)\he(\bj)=0, \\
\he(\bi)\hT_i\hT_{i+1}\hT_i\he(\bj)=\he(\bi)\hT_{i+1}\hT_i\hT_{i+1}\he(\bj),\\
\he(\bi)\hX_r^{\pm 1}\hX_k^{\pm 1}\he(\bj)=\he(\bi)\hX_k^{\pm 1}\hX_r^{\pm 1}\he(\bj),\\
\he(\bi)\hX_k\hX_k^{-1}\he(\bi)=\he(\bi)=\he(\bi)\hX_k^{-1}\hX_k\he(\bi),\\
\hX_{r+1}\he(\bi)=q^{-1}\hT_r\hX_{r}\hT_r\he(\bi)=q^{-1}\he(\bi)\hT_r\hX_{r}\hT_r,\end{matrix}\right\}\quad \text{if}\,\,\, \bi,\bj\in I^{\beta},\label{IJ5}
\end{align}
\begin{align}
\he(\bi)\hT_a\hT_k\he(\bj)=\he(\bi)\hT_k\hT_a\he(\bj),\,\,\text{if $|a-k|>1$ and $\bi,\bj\in I^{\beta},$,}\label{IJ5a}
\end{align}
\begin{align}\he(\bi)\hT_b\hX_k\he(\bj)=\he(\bi)\hX_k\hT_b\he(\bj),\,\,\text{if $k\neq b,b+1$ and $\bi,\bj\in I^{\beta}$,}\label{IJ5b}
\end{align}
where $1\leq k\leq n$, $1\leq r,a,b<n, 1\leq i<n-1$.
\end{lem}

\begin{proof} The relations (\ref{IJ5}), (\ref{IJ5a}) and (\ref{IJ5b}) follow directly from (\ref{1})--(\ref{7}), (\ref{7a}). The remaining relations follows from (\ref{Pri}), (\ref{qri2}), (\ref{Qri}) and Theorem \ref{BKiso}. For example,
assume that $i_r\neq i_{r+1}$, in order to show that $\he(\bi)\hT_r(\hX_{r+1}-\hX_r)\he(\bi)=(q-1)\he(\bi)\hX_{r+1}\he(\bi)$, it suffices (by (\ref{z0Lam})) to show that
for any $\Lam\in P^+$, $e(\bi)T_r(L_{r+1}-L_r)e(\bi)=(q-1)e(\bi)L_{r+1}e(\bi)$.

In fact, we have that $$\begin{aligned}
&\quad\,e(\bi)T_r(L_{r+1}-L_r)e(\bi)\\
&=e(\bi)\bigl(\psi_rQ_r(\bi)e(\bi)-P_r(\bi)e(\bi)\bigr)(L_{r+1}-L_r)=-e(\bi)P_r(\bi)e(\bi)(L_{r+1}-L_r)\\
&=\frac{1-q}{1-q^{i_r-i_{r+1}}}\biggl\{1+\frac{y_r-y_{r+1}}{1-q^{i_{r+1}-i_r}}+\sum_{k\geq 1}\frac{y_r-y_{r+1}}{1-q^{i_{r+1}-i_r}}
\Bigl(\frac{q^{i_{r+1}}y_{r+1}-q^{i_r}y_r}{q^{i_{r+1}}-q^{i_r}}\Bigr)^k\biggr\}\\
&\qquad\times (L_{r+1}-L_r)e(\bi)\\
&=\frac{(q-1)q^{i_{r+1}}(1-y_{r+1})}{q^{i_{r+1}}(1-y_{r+1})-q^{i_{r}}(1-y_{r})}(L_{r+1}-L_r)e(\bi)=(q-1)L_{r+1}e(\bi),
\end{aligned}
$$
as required. The other equalities can be verified in a similar way.
\end{proof}

\begin{lem}[Degenerate cases] \label{modifiedformV2} In the degenerate case, the following relations hold: \begin{align}
\hx_k\he(\bi)=\he(\bi)\hx_k,\,\,\he(\bi)\he(\bj)=\delta_{\bi\bj}\he(\bi), \quad\text{if $1\leq k\leq n, \bi,\bj\in I^\beta$,} \label{xkei}
\end{align}
\begin{align}
\left.\begin{matrix}
\he(\bi)\hs_r(\hx_{r+1}-\hx_r)\he(\bi)=\he(\bi),\\
\he(\bi)\hs_r\hx_{r}\he(\bi)=\he(\bi)\hx_r\hs_r\he(\bi),\\
\he(\bi)\hs_r\hx_{r+1}\he(\bi)=\he(\bi)\hx_{r+1}\hs_r\he(\bi),\end{matrix}\right\}
\quad \text{if}\,\,\, \bi\in I^{\beta}, i_r\neq i_{r+1}, \label{sx3}
\end{align}
\begin{align}
\he(\bi)f \he(\bj)=0,\,\quad \text{if}\,\,\, \bi,\bj\in I^{\beta}, \bi\neq\bj, f\in K[\hx_1,\dots,\hx_n], \label{fei}
\end{align}
\begin{align}
\he(\bi)\hs_r\he(\bj)=0,\,\quad \text{if}\,\,\, \bi,\bj\in I^{\beta}, \bi\notin\{\bj,s_r\bj\},  \label{sei}
\end{align}\begin{align}
\left.\begin{matrix}
\he(\bi)\hs_r^2\he(\bj)=\delta_{\bi\bj}\he(\bj), \\
\he(\bi)\hs_i\hs_{i+1}\hs_i\he(\bj)=\he(\bi)\hs_{i+1}\hs_i\hs_{i+1}\he(\bj),\\
\he(\bi)\hx_r \hx_k\he(\bj)=\he(\bi)\hx_k \hx_r\he(\bj),\\
\hx_{r+1}\he(\bi)=\hs_r\hx_{r}\hs_r\he(\bi)+\hs_r\he(\bi)=\he(\bi)\hs_r\hx_{r}\hs_r+\he(\bi)\hs_r,\end{matrix}\right\}\quad \text{if}\,\,\, \bi,\bj\in I^{\beta},\label{ij5}
\end{align}
\begin{align}
\he(\bi)\hs_a\hs_k\he(\bj)=\he(\bi)\hs_k\hs_a\he(\bj),\,\,\text{if $|a-k|>1$ and $\bi,\bj\in I^{\beta}$,}\label{ij5a}
\end{align}
\begin{align}\he(\bi)\hs_b\hx_k\he(\bj)=\he(\bi)\hx_k\hs_b\he(\bj),\,\,\text{if $k\neq b,b+1$ and $\bi,\bj\in I^{\beta}$,}\label{ij5b}
\end{align}
where $1\leq k\leq n$, $1\leq r,a,b<n, 1\leq i<n-1$.
\end{lem}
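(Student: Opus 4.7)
The strategy mirrors that of Lemma \ref{modifiedformV1}. By (\ref{z0Lam}), every identity among the elements $\he(\bi),\hs_k,\hx_r$ of $\underset{\Lambda}{\underleftarrow{\lim}}\,H_n^\Lambda$ may be checked after applying each canonical projection $\pi_\Lambda$; thus it suffices to establish the corresponding identities for the generators $e(\bi), s_k, L_r$ of every degenerate cyclotomic Hecke algebra $H_n^\Lambda$.

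The relations (\ref{xkei}), (\ref{fei}), (\ref{ij5}), (\ref{ij5a}) and (\ref{ij5b}) are then immediate from the defining relations (\ref{8})--(\ref{13}), (\ref{13a}) of $H_n$, the orthogonality $e(\bi)e(\bj)=\delta_{\bi\bj}e(\bi)$, and the fact that each polynomial in $L_1,\dots,L_n$ commutes with every $e(\bj)$. For (\ref{sei}), I plan to substitute $s_r e(\bj)=(\psi_r Q_r(\bj)-P_r(\bj))e(\bj)$ from Theorem \ref{BKiso}, use $\psi_r e(\bj)=e(s_r\bj)\psi_r$, and note that sandwiching with $e(\bi)$ when $\bi\notin\{\bj,s_r\bj\}$ annihilates both summands.

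The substantive content is (\ref{sx3}), all three of whose identities I plan to reduce to a single polynomial identity. From (\ref{13}) together with $s_r^2=1$ one derives the ``plus one'' commutation rules
\[s_r L_r=L_{r+1}s_r-1,\qquad s_r L_{r+1}=L_r s_r+1.\]
Meanwhile, using Brundan--Kleshchev's formula $s_re(\bi)=(\psi_r Q_r(\bi)-P_r(\bi))e(\bi)$ with $\psi_r e(\bi)=e(s_r\bi)\psi_r$, and invoking the hypothesis $i_r\neq i_{r+1}$ (which forces $e(\bi)e(s_r\bi)=0$), one obtains
\[e(\bi)s_r e(\bi)=-e(\bi)P_r(\bi)e(\bi).\]
Combining the commutation rules with this identity, all three equalities in (\ref{sx3}) collapse to the single statement
\[(L_{r+1}-L_r)P_r(\bi)e(\bi)=-e(\bi)\qquad\text{in }H_n^\Lambda.\]
After writing $L_{r+1}-L_r=(y_{r+1}-y_r)+(i_{r+1}-i_r)$ and substituting the explicit formula (\ref{Pri2}), this identity telescopes as a geometric series in the nilpotent element $u:=(y_r-y_{r+1})/(i_{r+1}-i_r)$: indeed $L_{r+1}-L_r=(i_{r+1}-i_r)(1-u)$ and $P_r(\bi)=-\frac{1}{i_{r+1}-i_r}\sum_{k\geq 0}u^k=-\frac{1}{i_{r+1}-i_r}(1-u)^{-1}$, whose product with $(L_{r+1}-L_r)$ is $-1$.

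I expect no genuine obstacle beyond book-keeping in the nilpotent geometric series. The conceptual architecture is identical to the proof of Lemma \ref{modifiedformV1}, with the multiplicative commutators $T_rX_k-X_{s_r(k)}T_r\in(q-1)K[X_1,\dots,X_n]$ replaced by the additive commutators $s_rL_k-L_{s_r(k)}s_r=\pm 1$, and the $q$-geometric series defining $P_r(\bi)$ and $Q_r(\bi)$ in the non-degenerate setting replaced by its degenerate counterpart.
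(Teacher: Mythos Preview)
Your proposal is correct and follows essentially the same approach as the paper's own (very terse) proof: the relations (\ref{ij5}), (\ref{ij5a}), (\ref{ij5b}) come directly from the defining relations (\ref{8})--(\ref{13a}) of $H_n$, while the remaining relations are deduced from the Brundan--Kleshchev isomorphism together with the explicit formulae (\ref{Pri2}), (\ref{qri1}), (\ref{Qri}). Your write-up in fact fleshes out the key computation for (\ref{sx3})---reducing it to $(L_{r+1}-L_r)P_r(\bi)e(\bi)=-e(\bi)$ and then telescoping the geometric series in the nilpotent $u=(y_r-y_{r+1})/(i_{r+1}-i_r)$---which the paper only indicates by citing (\ref{Pri2}).
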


\begin{proof} The relations (\ref{ij5}), (\ref{ij5a}) and (\ref{ij5b}) follow directly from (\ref{8})--(\ref{13}), (\ref{13a}). The remaining relations
follows from (\ref{Pri2}), (\ref{qri1}), (\ref{Qri}) and Theorem \ref{BKiso}.
\end{proof}

For each $\Lam\in P^+$, if $i_r\neq i_{r+1}$, then in the non-degenerate setting, $$
(L_{r}-L_s)e(\bi)=q^{i_{r}}(1-y_{r})-q^{i_{s}}(1-y_{s})=\bigl((q^{i_{r}}-q^{i_{s}})-(q^{i_{r}}y_{r}-q^{i_{s}}y_{s})\bigr)e(\bi),
$$
which is invertible in $e(\bi)\H_n^\Lam(q)e(\bi)$ because $(q^{i_{r}}y_{r}-q^{i_{s}}y_{s})e(\bi)$ is nilpotent. Moreover,
its inverse can be expressed as a power series on $(q^{i_{r}}y_{r}-q^{i_{s}}y_{s})e(\bi)$. It follows that $(\hX_{r}-\hX_s)\he(\bi)$ is actually an invertible element in
$\he(\bi)\Bigl(\underset{\Lambda}{\underleftarrow{\lim}}\,\H_n^\Lambda(q)\Bigr)\he(\bi)$ (regarded as an algebra with the identity element $\he(\bi)$). We denote its inverse by  $(\hX_{r}-\hX_s)^{-1}\he(\bi)$.
Similarly, in the degenerate setting, the element  $(\hx_{r}-\hx_s)\he(\bi)$ is  an invertible element in
$\underset{\Lambda}{\underleftarrow{\lim}}\,H_\beta^\Lambda$. We denote its inverse by  $(\hx_{r}-\hx_s)^{-1}\he(\bi)$.

\begin{dfn}\label{MAHA} In the non-degenerate setting, we define the {\bf modified non-degenerate affine Hecke algebra} $\widehat{\H}_\beta(q)$ of type $A$ to be the $K$-subalgebra of $\underset{\Lambda}{\underleftarrow{\lim}}\,\H_\beta^\Lambda(q)$ generated by $\check{\H}_\beta(q)$ and all the elements of the form $$
(\hX_{r}-\hX_s)^{-1}\he(\bi),\,\,\text{where $1\leq r<s\leq n, \bi\in I^\beta$ and $i_r\neq i_{s}$.}
$$
In the degenerate setting, we define the {\bf modified degenerate affine Hecke algebra} $\widehat{H}_\beta(q)$ of type $A$ to be the $K$-subalgebra of $\underset{\Lambda}{\underleftarrow{\lim}}\,H_\beta^\Lambda$ generated by $\check{H}_\beta$ and all the elements of the form $$
(\hx_{r}-\hx_s)^{-1}\he(\bi),\,\,\text{where $1\leq r<s\leq n, \bi\in I^\beta$ and $i_r\neq i_{s}$.}
$$
\end{dfn}

\begin{rem} \label{modifiedplus} 1) Replacing $\HH_n(q)$ by $\HH_n^{+}(q)$ in the definition of $\check{\H}_\beta(q)$ and $X_1^{\pm 1},\dots X_n^{\pm 1}$ by $X_1,\dots,X_n$, we can get an algebra which will be denoted by $\widehat{\H}^+_\beta(q)$. It is clear that $\widehat{\H}^+_\beta(q)$ is a $K$-subalgebra of
$\widehat{\H}_\beta(q)$.

2) Inside the algebra $\check{\H}_\beta(q)$, we can rewrite the first relation in (\ref{TX3}) as \begin{equation}\label{3.2b}
\he(\bi)\hT_r\he(\bi)=(q-1)\he(\bi)\hX_{r+1}(\hX_{r+1}-\hX_r)^{-1}\he(\bi),\,\,\text{if $i_r\neq i_{r+1}$.}\end{equation}
Similarly, inside the algebra $\check{H}_\beta$, we can rewrite the first relation in (\ref{sx3}) as \begin{equation}\label{3.2c}
\he(\bi)\hs_r\he(\bi)=\he(\bi)(\hx_{r+1}-\hx_r)^{-1}\he(\bi),\,\,\,\text{if $i_r\neq i_{r+1}$.}
\end{equation}
\end{rem}

Let $\widehat{\H}_\beta\in\{\widehat{\H}_\beta(q), \widehat{H}_\beta\}$. It is easy to see that if $i_r\neq i_{r+1}$, then $$\begin{matrix}
\he(\bi)\hT_r(\hX_{r+1}-\hX_r)\he(\bi)=(q-1)\he(\bi)\hX_{r+1}\he(\bi)=\he(\bi)(\hX_{r+1}-\hX_r)\hT_r\he(\bi),\\
\he(\bi)\hs_r(\hx_{r+1}-\hx_r)\he(\bi)=\he(\bi)=\he(\bi)(\hx_{r+1}-\hx_r)\hs_r\he(\bi) .
\end{matrix}
$$

Let $\ast$ be the $K$-algebra anti-isomorphism of $\widehat{\H}_\beta$ which is uniquely determined by $$\begin{aligned}
&\he(\bi)^{\ast}:=\he(\bi),\quad \bigl(\he(\bi)f\he(\bj)\bigr)^{\ast}:=\he(\bj)f^{\ast}\he(\bi),\,\,\quad\forall\,\bi,\bj\in I^{\beta}, f\in\H_n.\\
&\Bigl((\hX_{r}-\hX_s)^{-1}\he(\bi)\Bigr)^\ast=(\hX_{r}-\hX_s)^{-1}\he(\bi),\,\, \Bigl((\hx_{r}-\hx_s)^{-1}\he(\bi)\Bigr)^\ast=(\hx_{r}-\hx_s)^{-1}\he(\bi) .
\end{aligned}
$$

For each $\bi\in I^{\beta}$, let $\{t_k(\bi)|1\leq k\leq n\}$ be a set of $n$ algebraically independent indeterminates over $K$. We define \begin{equation}\label{pol}
\pol=\bigoplus_{\bi\in I^{\beta}}\ppol(\bi),
\end{equation}
where \begin{equation}\label{polb}
\ppol(\bi):=\begin{cases}K[t_1(\bi)^{\pm 1},\dots,t_n(\bi)^{\pm 1}], &\text{if $\widehat{\H}_\beta=\widehat{\H}_\beta(q)$;}\\
K[t_1(\bi),\dots,t_n(\bi)], &\text{if $\widehat{\H}_\beta=\widehat{H}_\beta$.}
\end{cases}
\end{equation}

Let $\widetilde{\ppol}(\bi)$ be the localisation of $\ppol(\bi)$ with respect to the following multiplicatively closed subset \begin{equation}\label{units0}
\bigl\{(t_{r}(\bi)-t_s(\bi))^k\bigm|1\leq r\neq s\leq n, k\in\Z^{\geq 0}\bigr\} .
\end{equation}
We set \begin{equation}\label{pol2}
\wpol:=\bigoplus_{\bi\in I^{\beta}}\widetilde{\ppol}(\bi) .
\end{equation}

The symmetric group $\Sym_n$ acts on $\wpol$ by taking $t_k(\bi)$ to $t_{w(k)}(w\bi)$, and $(t_{r}(\bi)-t_s(\bi))^{k}$ to $(t_{w(r)}(w\bi)-t_{w(s)}(w\bi))^{k}$, where $w\in\Sym_n$, $k\in\Z$, $\bi\in I^\beta$. In particular, the transposition $s_k$ maps $t_a(\bi)$ to $t_a(s_k\bi)$ if $a\neq k,k+1$; $t_k(\bi)$ to $t_{k+1}(s_k\bi)$, and $t_{k+1}(\bi)$ to $t_{k}(s_k\bi)$.

Recall that $\{t_k|1\leq k\leq n\}$ is a set of $n$ algebraically independent indeterminates over $K$. Let $\widetilde{\mathcal{P}_n}$ be the localisation of $K[t_1^{\pm 1},\dots,t_n^{\pm 1}]$ if $\widehat{\H}_\beta=\widehat{\H}_\beta(q)$, or the localisation of $K[t_1,\dots,t_n]$ if $\widehat{\H}_\beta=\widehat{H}_\beta$, with respect to the following multiplicatively closed subset $$
\bigl\{(t_{r}-t_s)^k\bigm|1\leq r\neq s\leq n, k\in\Z^{\geq 0}\bigr\} .
$$
Let $$\theta_{\bi}:\,\, \widetilde{\mathcal{P}_n}\cong \widetilde{\ppol}(\bi)$$ be the canonical isomorphism induced by the map $t_k^{\pm 1}\mapsto t_k(\bi)^{\pm 1}$ for each $1\leq k\leq n$. For each $f\in \widetilde{\mathcal{P}_n}$, we set \begin{equation}\label{fi1}
f_{\bi}:=\theta_{\bi}(f)\in \widetilde{\ppol}(\bi) .
\end{equation}
The symmetric group $\Sym_n$ acts on $\widetilde{\mathcal{P}_n}$ by taking $t_k$ to $t_{w(k)}$, and $(t_{r}-t_s)^{k}$ to $(t_{w(r)}-t_{w(s)})^{k}$, where $w\in\Sym_n$, $k\in\Z$. For any $f\in\widetilde{\mathcal{P}_n}$, we have that
$w(f_\bi)=\bigl(w(f)\bigr)_{w\bi}$ for any $w\in\Sym_n, \bi\in I^{\beta}$.

For any $\bi,\bj\in I^{\beta}, f\in\widetilde{\mathcal{P}_n}$, $1\leq r<n$ and $1\leq k\leq n$, we define \begin{equation}\label{000}\left\{\begin{aligned}
 \hX_k^{\pm 1}\he(\bi)\cdot f_\bi:&=t_k(\bi)^{\pm 1}f_\bi,\\
 \he(\bj)\cdot f_\bi:&=\delta_{\bi\bj}f_{\bi},\quad\text{if $\bi,\bj\in I^{\beta}$,}\\
\hT_r\he(\bi)\cdot f_\bi:&=\Bigl(\frac{t_{r+1}-qt_r}{t_{r+1}-t_r}{s_r(f)}\Bigr)_{s_r\bi}+(q-1)\frac{t_{r+1}(\bi)}{t_{r+1}(\bi)-t_r(\bi)}f_{\bi},
\end{aligned}\right.
\end{equation}
and  \begin{equation}\label{111}\left\{\begin{aligned}
 \hx_k\he(\bi)\cdot f_\bi:&=t_k(\bi)f_\bi,\\
 \he(\bj)\cdot f_\bi:&=\delta_{\bi\bj}f_{\bi},\quad\text{if $\bi,\bj\in I^{\beta}$,}\\
 \hs_r\he(\bi)\cdot f_\bi:&=\Bigl(\frac{t_{r+1}-t_r-1}{t_{r+1}-t_r}{s_r(f)}\Bigr)_{s_r\bi}+\frac{1}{t_{r+1}(\bi)-t_r(\bi)}f_{\bi}.
\end{aligned}\right.
\end{equation}

\begin{prop} \label{rep} Let $\widehat{\H}_\beta\in\{\widehat{\H}_\beta(q), \widehat{H}_\beta\}$. The above rules extend uniquely to a faithful representation $\widehat{\rho}_\beta$ of $\widehat{\H}_\beta$ on ${\wpol}$.
\end{prop}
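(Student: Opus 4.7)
The plan is to verify in three steps that the formulas (\ref{000}) and (\ref{111}) define a well-defined action of the generators of $\check{\H}_\beta$ on $\wpol$, to extend this action to the full generalized Ore localization $\widehat{\H}_\beta$, and finally to establish faithfulness via a triangular normal form argument modelled on the proof of Lemma~\ref{faithful1}.

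First I would check that the formulas respect the relations listed in Lemma~\ref{modifiedformV1} (resp., Lemma~\ref{modifiedformV2}). The Demazure--Lusztig-type formulas for $\hT_r\he(\bi)$ and $\hs_r\he(\bi)$ are literal transcriptions of the classical actions (\ref{affAct1}), (\ref{affAct2}) indexed by the residue sequence $\bi$, so the braid, quadratic and polynomial commutation relations reduce to identities already known for $\rho_q$ and $\rho_1$ after identifying each $\widetilde{\ppol}(\bi)$ with $\widetilde{\mathcal{P}_n}$ via $\theta_\bi$. The new relations involving idempotents — orthogonality $\he(\bi)\he(\bj)=\delta_{\bi\bj}\he(\bi)$, the crossing rule $\hT_r\he(\bi)=\he(s_r\bi)\hT_r\he(\bi)$, and the vanishings (\ref{TREI}), (\ref{sei}) — follow by inspection from the fact that $\hT_r\he(\bi)$ carries $\widetilde{\ppol}(\bi)$ into $\widetilde{\ppol}(s_r\bi)\oplus\widetilde{\ppol}(\bi)$. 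Finally the relations (\ref{3.2b}), (\ref{3.2c}) which require $i_r\neq i_{r+1}$ hold because $(t_{r+1}(\bi)-t_r(\bi))$ is a unit in $\widetilde{\ppol}(\bi)$ by the very definition of the multiplicative set (\ref{units0}).

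Next, I would observe that $(\hX_r-\hX_s)\he(\bi)$ (resp., $(\hx_r-\hx_s)\he(\bi)$) acts on $\widetilde{\ppol}(\bi)$ as multiplication by $(t_r(\bi)-t_s(\bi))$, which is invertible whenever $i_r\neq i_s$ by (\ref{units0}). By Lemma~\ref{Local1} together with the universal property of the generalized Ore localization, the representation therefore extends uniquely from $\check{\H}_\beta$ to $\widehat{\H}_\beta$, with $(\hX_r-\hX_s)^{-1}\he(\bi)$ (resp., $(\hx_r-\hx_s)^{-1}\he(\bi)$) acting by multiplication by $(t_r(\bi)-t_s(\bi))^{-1}$ on the $\bi$-summand and by zero on all other summands.

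For faithfulness, I would imitate the classical argument underlying Lemma~\ref{faithful1}. Using the standard basis of $\widehat{\H}_\beta$ provided by Lemma~\ref{basis1}, any nonzero element $z$ can be written in a normal form $z=\sum_{w,\bi,\bj}\he(\bj)g_{w,\bi,\bj}(\hX)\hT_w\he(\bi)$ with only finitely many nonzero coefficients. Applying $\widehat{\rho}_\beta(z)$ to $1_\bi\in\widetilde{\ppol}(\bi)$ and projecting onto a fixed summand $\widetilde{\ppol}(\bj)$, the triangular formula of Lemma~\ref{affAct1b} allows one to isolate the contribution of the longest $w$ sending $\bi$ to $\bj$ and force its coefficient to vanish, proceeding by downward induction on $\ell(w)$. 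The main technical obstacle lies here: because the action lands in a localized rather than polynomial ring, the notion of ``leading term'' must be interpreted after clearing denominators permitted by (\ref{units0}), and one must verify that distinct triples $(w,\bi,\bj)$ contribute linearly independent functions in $\widetilde{\ppol}(\bj)$. The cleanest way to handle this is to factor the question through a sufficiently large $\Lam\in P^+$: by the construction of $\widehat{\H}_\beta$ inside $\underset{\Lambda}{\underleftarrow{\lim}}\,\HH_n^\Lambda$ and by Lemma~\ref{zeiKeylem}, it suffices to check linear independence after specialisation to each cyclotomic quotient $\HH_\beta^\Lam$, which is in turn controlled by the classical faithful polynomial representation of $\HH_n$ restricted to the block in question.
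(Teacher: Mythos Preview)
There are two genuine gaps in your plan, and both point toward the strategy the paper actually uses.

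\textbf{Well-definedness.} You propose to check that the formulas (\ref{000}), (\ref{111}) respect the relations listed in Lemma~\ref{modifiedformV1} and Lemma~\ref{modifiedformV2}. But those lemmas are not a presentation of $\check{\H}_\beta$: the algebra is \emph{defined} as a subalgebra of the inverse limit (Definition~\ref{checkBeta}), not abstractly by generators and relations, and there is no claim in the paper that the listed relations suffice. Verifying them therefore does not give you a well-defined action. The paper circumvents this by first defining, via the same formulas stripped of the $\he(\bi)$ decorations, a representation $\widetilde{\rho}_n$ of the abstract algebra $\H_n(q)$ on $\wpol$ (for which one \emph{does} have a presentation, so the braid and quadratic checks you describe are exactly what is required), and then separately proving that $\Ker\check{\pi}$ acts as zero. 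This last point is the nontrivial step: it uses the inverse-limit description of $\check{\H}_n$ together with a degree-bounding argument against a specially chosen large $\Lambda$.

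\textbf{Faithfulness.} Your proposed argument invokes the standard basis of $\widehat{\H}_\beta$ from Lemma~\ref{basis1}, but that lemma is proved \emph{after} Proposition~\ref{rep} and its proof explicitly appeals to the faithfulness established here (``by Proposition~\ref{rep}, it suffices to show that their images under $\widehat{\rho}_\beta$ are $K$-linearly independent''). So this is circular. Your fallback suggestion of specialising to cyclotomic quotients $\H_\beta^\Lambda$ does not work either, since the faithful polynomial representation of $\H_n$ does not factor through any such quotient. The paper instead obtains faithfulness of $\widetilde{\rho}_n$ directly from that of the classical $\rho_q$ via the identity $\sum_{\bj}\theta_\bj^{-1}(z\bullet f_\bi)=z\ast f$, so that faithfulness comes essentially for free once one knows $\widetilde{\rho}_n$ is a representation of $\H_n(q)$; the remaining content is entirely in showing $\Ker\check{\pi}$ acts trivially, after which the extension to $\widehat{\H}_\beta$ via Lemma~\ref{Local1} proceeds as you describe.
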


\begin{proof} We only consider the non-degenerate case as the degenerate case is similar. We divide the proof into three steps:
\smallskip

{\it Step 1.} For any $\bi\in I^{\beta}, f\in\widetilde{\mathcal{P}_n}$, $1\leq r<n$ and $1\leq k\leq n$, we define \begin{equation}\label{0000}\left\{\begin{aligned}
X_k^{\pm 1}\bullet   f_\bi:&=\delta_{\bi\bj}t_k(\bi)^{\pm 1}f_\bi,\\
T_r\bullet f_\bi:&=\Bigl(\frac{t_{r+1}-qt_r}{t_{r+1}-t_r}{s_r(f)}\Bigr)_{s_r\bi}+(q-1)\frac{t_{r+1}(\bi)}{t_{r+1}(\bi)-t_r(\bi)}f_{\bi},
\end{aligned}\right.
\end{equation}

We claim that the formulae (\ref{0000}) extends to a well-defined representation $\widetilde{\rho}_n$ of $\HH_n(q)$ on ${\wpol}$.

We need to verify the defining relations for $\HH_n(q)$. The only non-trivial relation that need to be checked is the braid relations and the quadratic relations. In other words, we need to prove $(T_s-q)(T_s+1)\bullet f_{\bi}=0$ for any $1\leq s<n$, and \begin{equation}\label{braid000}
T_{r+1}T_rT_{r+1}\bullet f_{\bi}=T_rT_{r+1}T_r\bullet f_{\bi},\quad  \,\,\forall\,1\leq r<n-1.
\end{equation}

The first  equality follows from a direct and easy verification. For the second one, it can be proved by a brute-force calculation via comparing  the coefficients of $$\begin{aligned}
&f_{\bi},\,\,\,\,(s_r(f))_{s_r\bi},\,\,\,\,(s_{r+1}(f))_{s_{r+1}\bi},\,\,\,\,(s_rs_{r+1}(f))_{s_rs_{r+1}\bi},\\
&(s_{r+1}s_r(f))_{s_{r+1}s_r\bi},\,\,\,\,
(s_rs_{r+1}s_r(f))_{s_rs_{r+1}s_r\bi}.
\end{aligned}
$$
on both sides of (\ref{braid000}). Most of the check is an easy job except for the coefficient of $f_{\bi}$. In fact, we can get the following coefficient $C_1$ of $f_{\bi}$ appearing in the LHS of (\ref{braid000}): $$\begin{aligned}
C_1&=(q-1)^3\frac{t_{r+2}(\bi)^2t_{r+1}(\bi)}{(t_{r+2}(\bi)-t_{r+1}(\bi))^2(t_{r+1}(\bi)-t_{r}(\bi))}\\
&\qquad +\frac{t_{r+2}(\bi)-qt_{r+1}(\bi)}{t_{r+2}(\bi)-t_{r+1}(\bi)}\times \frac{t_{r+1}(\bi)-qt_{r+2}(\bi)}{t_{r+1}(\bi)-t_{r+2}(\bi)}\times
\frac{(q-1)t_{r+2}(\bi)}{t_{r+2}(\bi)-t_{r}(\bi)},
\end{aligned}
$$
while the coefficient $C_2$ of $f_{\bi}$ appearing in the RHS of (\ref{braid000}) is as follows: $$\begin{aligned}
C_2&=(q-1)^3\frac{t_{r+1}(\bi)^2t_{r+2}(\bi)}{(t_{r+1}(\bi)-t_{r}(\bi))^2(t_{r+2}(\bi)-t_{r+1}(\bi))}\\
&\qquad +\frac{t_{r+1}(\bi)-qt_{r}(\bi)}{t_{r+1}(\bi)-t_{r}(\bi)}\times \frac{t_{r}(\bi)-qt_{r+1}(\bi)}{t_{r}(\bi)-t_{r+1}(\bi)}\times
\frac{(q-1)t_{r+2}(\bi)}{t_{r+2}(\bi)-t_{r}(\bi)} .
\end{aligned}
$$
We want to prove that $C_1=C_2$. It suffices to show that $$\begin{aligned}
&(q-1)^2t_{r+1}(\bi)t_{r+2}(\bi)(t_{r+2}(\bi)-t_{r}(\bi))(t_{r+1}(\bi)-t_{r}(\bi))-\\
&\qquad\quad (t_{r+2}(\bi)-qt_{r+1}(\bi))(t_{r+1}(\bi)-qt_{r+2}(\bi))(t_{r+1}(\bi)-t_{r}(\bi))^2\\
&=(q-1)^2(t_{r+1}(\bi))^2(t_{r+2}(\bi)-t_{r+1}(\bi))(t_{r+2}(\bi)-t_{r}(\bi))-\\
&\qquad\quad(t_{r+1}(\bi)-qt_{r}(\bi))(t_{r}(\bi)-qt_{r+1}(\bi))(t_{r+2}(\bi)-t_{r+1}(\bi))^2 .
\end{aligned}
$$
We regard the above equality as an equation on the indeterminate $t_{r+1}(\bi)$ with degree $\leq 2$. Set $t_{r+1}(\bi)=t_r(\bi), t_{r+1}(\bi), 0$, we always get an identity. This implies that it must be an identity forever. This proves that $C_1=C_2$ as required. This completes the proof of our claim.
\smallskip

{\it Step 2.} We claim that representation $\widetilde{\rho}_n$ constructed in Step 1 is faithful.

In fact, comparing the formulae (\ref{0000}) and (\ref{affAct1}), we see that for any $\bi\in I^\beta$ and $f\in\widetilde{\mathcal{P}}_n$, $$
\sum_{\bj\in I^\beta}\theta_{\bj}^{-1}(z\bullet f_\bi)=z\ast f ,
$$
where ``$\ast$" is the action introduced in (\ref{affAct1}) and (\ref{affAct2}), and by convention we understand that $\theta_{\bj}^{-1}(x):=0$ whenever $x\in\widetilde{\ppol}(\bi)$ with $\bj\neq\bi$.
As a result, the faithfulness of $\widetilde{\rho}_n$ follows from the faithfulness of $\rho_q$.

\smallskip
{\it Step 3.} By Lemma \ref{AffInj}, we know that $\check{\pi}$ is injective which induces a $K$-algebra isomorphism $\H_n(q)\cong\check{\H}_n(q)$. Therefore, the faithful representation $\widetilde{\rho}_n$ gives rise to a faithful representation $\check{\rho}_n$ of $\check{\HH}_n(q)$ on ${\wpol}$.

Now by Lemma \ref{zeiKeylem} every elements of $\check{\H}_\beta(q)$ can be written uniquely as $\sum_{\bi\in I^\beta}z_{\bi}\he(\bi)$, where $z_\bi\in\check{\H}_n$ for each $\bi$. We define $$
\check{\rho}_\beta\biggl(\sum_{\bi\in I^\beta}z_{\bi}\he(\bi)\biggr)(f_\bj):=\check{\rho}_n\bigl(z_\bj\bigr)f_{\bj} ,\quad\,\forall\,\,\bj\in I^\beta, f\in K[t_1^{\pm 1},\dots,t_n^{\pm 1}].
$$
From Lemma \ref{zeiKeylem} we see that $\check{\rho}_\beta$ is a well-defined representation of $\check{\HH}_\beta(q)$ on ${\wpol}$. The faithfulness of $\check{\rho}_\beta$ follows from the faithfulness of $\check{\rho}_n$.

Finally, for any $1\leq r<n$, $t_r(\bj)-t_{r+1}(\bj)$ is invertible in $\widetilde{\ppol}(\bj)$. It follows from Lemma \ref{Local1} and Lemma \ref{OreLocal} that the representation $\check{\rho}_\beta$ of $\check{\HH}_\beta(q)$ on ${\wpol}$ can be extended uniquely to the representation $\widehat{\rho}_\beta$ of $\widehat{\HH}_\beta(q)$ on ${\wpol}$ which is given exactly by the formula (\ref{000}). The faithfulness of
$\widehat{\rho}_\beta$ follows from the faithfulness of $\check{\rho}_\beta$.
\end{proof}

Let $\bi\in I^\beta$. For each $w\in\Sym_n$, we fix a reduced expression $s_{j_1}s_{j_2}\dots s_{j_k}$ of $w$, and define $$\begin{aligned}
\widehat{w}_{\bi}&:=\bigl(\he(w\bi)\hs_{j_1}\he(s_{j_1}w\bi)\bigr)
\bigl(\he(s_{j_1}w\bi)\hs_{j_2}\he(s_{j_2}s_{j_1}w\bi)\bigr)\dots \bigl(\he(s_{j_k}\bi)\hs_{j_k}\he(\bi)\bigr),\\
\widehat{T}_{w,\bi}&:=\bigl(\he(w\bi)\hT_{j_1}\he(s_{j_1}w\bi)\bigr)
\bigl(\he(s_{j_1}w\bi)\hT_{j_2}\he(s_{j_2}s_{j_1}w\bi)\bigr)\dots \bigl(\he(s_{j_k}\bi)\hT_{j_k}\he(\bi)\bigr).
\end{aligned} $$

\begin{thm} \label{basis1} The following set \begin{equation}\label{basis314}
\biggl\{\widehat{T}_{w,\bi}\hX_1^{a_1}\dots \hX_n^{a_n}\prod_{\substack{1\leq r<s\leq n\\ i_r\neq i_{s}}}(\hX_r-\hX_{s})^{-b_{r,s}}\he(\bi)\biggm|\begin{matrix}
\text{$w\in\Sym_n, \bi\in I^{\beta}$, $b_{r,s}\in\N$,}\\
\text{$a_1,\dots,a_n\in\Z$, $b_{r,s}>0$ only if either}\\
\text{$a_r=0\geq a_{s}$ or $a_r>0=a_{s}$}\end{matrix}\biggr\}
\end{equation}
is a $K$-basis of $\widehat{\H}_\beta(q)$, and the following set \begin{equation}\label{basis315}
\biggl\{\widehat{w}_{\bi}\hx_1^{a_1}\dots \hx_n^{a_n}\prod_{\substack{1\leq r<s\leq n\\ i_r\neq i_{s}}}(\hx_r-\hx_{s})^{-b_{r,s}}\he(\bi)\biggm|\begin{matrix}
\text{$w\in\Sym_n, \bi\in I^{\beta}$, $b_{r,s}, a_1,\dots,a_n\in\N$,}\\
\text{$b_{r,s}>0$ only if $a_{s}=0$}\end{matrix}\biggr\}
\end{equation}
is a $K$-basis of $\widehat{H}_\beta$.
\end{thm}

\begin{proof} We only prove (\ref{basis314}) as (\ref{basis315}) can be proved in a similar way.  We use $\mathcal{B}_\beta(q)$ to denote the set labelled by (\ref{basis314}).

For any $\bi\in I^\beta$,  $1\leq r<s\leq n$ with $i_r\neq i_s$, we have that $$ \begin{aligned}
&\quad\, \he(\bi)(\hX_r-\hX_s)^{-1}\hT_k\he(s_k\bi)-\he(\bi)\hT_k(\hX_{s_k(r)}-\hX_{s_k(s)})^{-1}\he(s_k\bi)\\
&=\begin{cases} 2(q-1)\he(\bi)\hX_{r+1}(\hX_r-\hX_s)^{-1}(\hX_{s_k(r)}-\hX_{s_k(s)})^{-1}\he(s_k\bi), &\text{if $s=r+1$ and $k=r$,}\\
(q-1)\he(\bi)\hX_{k+1}(\hX_r-\hX_s)^{-1}(\hX_{s_k(r)}-\hX_{s_k(s)})^{-1}\he(s_k\bi), &\text{if $s-1>k=r$ or $s=k+1>r+1$,}\\
-(q-1)\he(\bi)\hX_{k+1}(\hX_r-\hX_s)^{-1}(\hX_{s_k(r)}-\hX_{s_k(s)})^{-1}\he(s_k\bi), &\text{if $k=r-1$ or $k=s$,}\\
0, &\text{otherwise.}
\end{cases}
\end{aligned}
$$
As a result, using Lemma \ref{basis2aff} and the relation (\ref{3.2b}) we can see that set $\mathcal{B}_\beta(q)$ spans the $K$-linear space $\widehat{\H}_\beta(q)$. It remains to prove that they are $K$-linearly independent. To this end, by Proposition
\ref{rep}, it suffices to show that their images under $\widehat{\rho}_\beta$ are $K$-linearly independent.

By Lemma \ref{zeiKeylem} and Lemma \ref{modifiedformV1}, $\{\he(\bi)|\bi\in I^\beta\}$ is a set of nonzero pairwise  orthogonal idempotents. Suppose that the elements in (\ref{basis314}) are $K$-linearly dependent. Then we can find $\bi,\bj\in I^\beta$ and a nonempty finite subset $J\subseteq\Sym_n$, such that $w\bi=\bj$ for any $w\in J$, and \begin{equation}\label{sum0}
\sum_{w\in J, \underline{a}, \mathbf{b}}c_{w,\underline{a},\mathbf{b}}\he(\bj)\widehat{T}_{w,\bi}\hX_1^{a_1}\dots \hX_n^{a_n}\prod_{\substack{1\leq r<s\leq n\\ i_r\neq i_{s}}}(\hX_r-\hX_{s})^{-b_{r,s}}\he(\bi)=0,
\end{equation}
where $\underline{a}=(a_1,\dots,a_n)\in\Z^n,
\mathbf{b}=(b_{r,s})_{r,s}$, and $b_{r,s}>0$ only if $a_r=0\geq a_{s}$ or $a_r>0=a_{s}$ for any $1\leq r<s<n$ with $i_r\neq i_s$, and $0\neq c_{w,\underline{a},\mathbf{b}}\in K$ for any $3$-tuple $(w,\underline{a},\mathbf{b})$.
Furthermore, we assume that (\ref{sum0}) is chosen such that
\begin{equation}\label{min}
\text{$\#\{(w,\underline{a},\mathbf{b})|w\in J, c_{w,\underline{a},\mathbf{b}}\neq 0\}$ is minimal.}
\end{equation}

For each $w\in J$, we set $$
J(w):=\{(\underline{a},\mathbf{b})|c_{w,\underline{a},\mathbf{b}}\neq 0\}.
$$
By assumption $J\neq\emptyset$. Then $$
\sum_{w\in J}\he(\bj)\widehat{T}_{w,\bi}\he(\bi)\Bigl(\sum_{(\underline{a},\mathbf{b})\in J(w)}c_{w,\underline{a},\mathbf{b}}\hX_1^{a_1}\dots \hX_n^{a_n}\prod_{\substack{1\leq r<s\leq n\\ i_r\neq i_s}}(\hX_r-\hX_{s})^{-b_{r,s}}\Bigr)\he(\bi)=0.
$$
We divide the remaining proof into two steps:

\smallskip

{\it Step 1.} We claim that if $$
0\neq f\in K[\hX_i^{\pm 1}\he(\bi), (\hX_r-\hX_{s})^{-1}\he(\bi)|1\leq i\leq n, 1\leq r<s\leq n, i_r\neq i_{s}],$$
then $\sum_{w\in J}\he(\bj)\hT_w\he(\bi)f\neq 0$.

In fact, multiplying some monomial of the form $\hX_1^{a'_1}\hX_2^{a'_2}\dots \hX_n^{a'_n}\prod_{1\leq r<s\leq n}(\hX_r-\hX_{s})^{a'_{rs}}\he(\bi)$ on the RHS of (\ref{sum0}) if necessary, we can assume without loss of generality that $0\neq f\in K[\hX_1\he(\bi),\dots,\hX_n\he(\bi)]$ and $f$ is a $K$-linear combination of some monomials of the form $\hX_1^{c_1}\dots\hX_n^{c_n}\he(\bi)$ with $c_1\gg c_2\gg \dots\gg c_n>0$. Here, $\gg$ means that for each $1\leq r<s\leq n$, $c_r-c_s$ is sufficiently large with respect to $n$.

For any $\underline{c}=(c_1,\dots,c_n), \underline{c}':=(c'_1,\dots,c'_n)\in\mathbb{N}^n$, we define $$
\underline{c}>\underline{c}'\quad \text{if and only if}\quad \text{$\begin{matrix}\text{there exists some $1\leq i\leq n$ such that}\\
\text{$c_i\gg c'_i$ and $c_j=c'_j$ for any $j<i$.}\end{matrix}$}
$$

Let  $\underline{c}=(c_1,\dots,c_n)\in\N^n$ be the unique maximal element (under ``$<$") such that $\hX_1^{c_1}\dots\hX_n^{c_n}$ appears with nonzero coefficient in $f$. Let $w_{J}\in J$ be a fixed element such that
$\ell(w_J)\geq\ell(w)$ for any $w\in J$. We consider the representation $\widehat{\rho}_\beta$. For any $\underline{d}=(d_1,\dots,d_n)\in\N^n$ such that $\hX_1^{d_1}\dots\hX_n^{d_n}$ appear with nonzero coefficient in $f$, it follows from (\ref{0000}) that $$\begin{aligned}
&\quad\,\Bigl(\sum_{w\in J}\he(\bj)\hT_w\he(\bi)\hX_1^{d_1}\dots\hX_n^{d_n}\he(\bi)\Bigr)(1)\\
&=A_{\underline{d}}t_{w_{J}(1)}(\bj)^{d_1}\dots t_{w_{J}(n)}(\bj)^{d_n}\he(\bj)+\sum_{w<w_J}B_{\underline{d},w}(t_{w(1)}(\bj))^{d_1}\dots (t_{w(n)}(\bj))^{d_n}\he(\bj) ,\end{aligned}
$$
where both $A_{\underline{d}}$ and $B_{\underline{d},w}$ are certain rational functions on $t_1(\bj),\dots,t_n(\bj)$ such that the degrees of their denominators and numerators   (with respect to each variable $t_1(\bj),\dots,t_n(\bj)$) depend only on $w_J, w, n$ and not on $\underline{d}$. Now we multiply all the denominators on both sides. Note that $A_{\underline{d}}=A_{\underline{c}}$ and the coefficient of $A_{\underline{d}}t_{w_{J}(1)}(\bj)^{d_1}\dots t_{w_{J}(n)}(\bj)^{d_n}\he(\bj)$ in
$\sum_{w\in J}\he(\bj)\hT_w\he(\bi)f$ is a nonzero scalar multiple of the coefficient of $A_{\underline{c}}t_{w_{J}(1)}(\bj)^{c_1}\dots t_{w_{J}(n)}(\bj)^{c_n}\he(\bj)$ in
$\sum_{w\in J}\he(\bj)\hT_w\he(\bi)f$. Our assumption that $c_r-c_s$ is sufficiently large with respect to the given $n$ (for any $1\leq r<s\leq n$) implies that the term which involves $t_{w_{J}(1)}(\bj)^{c_1}\dots t_{w_{J}(n)}(\bj)^{c_n}$ as a factor can never be cancelled. This proves that $\sum_{w\in J}\he(\bj)\hT_w\he(\bi)f\neq 0$.
\smallskip

{\it Step 2.} In view of the result we proved in Step 1, we can get that for each $w\in J$,  $$
\sum_{(\underline{a},\mathbf{b})\in J(w)}c_{w,\underline{a},\mathbf{b}}\hX_1^{a_1}\dots \hX_n^{a_n}\prod_{\substack{1\leq r<s\leq n\\ i_r\neq i_{s}}}(\hX_r-\hX_{s})^{-b_{r,s}}\he(\bi)=0 .
$$
Applying Lemma \ref{zeiKeylem} and use the representation $\check{\rho}_n$, we get that
\begin{equation}\label{nonzero}
\sum_{(\underline{a},\mathbf{b})\in J(w)}c_{w,\underline{a},\mathbf{b}}t_1(\bi)^{a_1}\dots t_n(\bi)^{a_n}\prod_{\substack{1\leq r<s\leq n\\ i_r\neq i_{s}}}(t_r(\bi)-t_{s}(\bi))^{-b_{r,s}}=0 .
\end{equation}

If $b_{r,s}=0$ for any $1\leq r<s\leq n$, then it is clear that we will get a contradiction. Otherwise, we set $$\begin{aligned}
s_0&:=\max\{s|\text{$b_{r,s}>0$, $1\leq r<s\leq n$, $i_r\neq i_{s}$, $(\underline{a},\mathbf{b})\in J(w)$}\},\\
N_0&:=\max\{b_{r,s_0}|\text{$b_{r,s_0}>0$, $1\leq r<s_0$, $(\underline{a},\mathbf{b})\in J(w)$}\}.
\end{aligned}
$$

We fix $1\leq r_0<s_0\leq n$ such that $b_{r_0,s_0}=N_0$. We multiply $(t_{r_0}(\bi)-t_{s_0}(\bi))^{N_0}$ on both sides of (\ref{nonzero}) and then specialize $t_{s_0}(\bi):=t_{r_0}(\bi)$. Then by our construction, the fact that
$a_{r_0}=0\geq a_{s_0}$ or $a_{r_0}>0=a_{s_0}$ and (\ref{min}), we can deduce from (\ref{nonzero}) the following equality:  \begin{equation}\label{nonzero1}
\sum_{(\underline{a},\mathbf{b})\in J_1(w)}c'_{w,\underline{a},\mathbf{b}}z_0\prod_{\substack{1\leq k\leq n\\ k\neq r_0,s_0}}t_k(\bi)^{a_k}\prod_{\substack{1\leq r<s\leq s_0,\\ (r,s)\neq (r_0,s_0)}}(t_r(\bi)-t_{s}(\bi))^{-b_{r,s}}=0 ,
\end{equation}
where $\emptyset\neq J_1(w)\subseteq J(w)$, $0\neq c'_{w,\underline{a},\mathbf{b}}$ for each $3$-tuple $(w,\underline{a},\mathbf{b})$, $$
z_0:=\begin{cases} t_{r_0}(\bi)^{a_{r_0}}, &\text{if $a_{r_0}>0=a_{s_0}$;}\\
t_{r_0}(\bi)^{a_{s_0}}, &\text{if $a_{r_0}=0>a_{s_0}$;}\\
1, &\text{otherwise.}
\end{cases}
$$

Next we define $$\begin{aligned}
s_1&:=\max\{s|\text{$b_{r,s}>0$, $1\leq r<s\leq s_0$, $(r,s)\neq (r_0,s_0)$, $(\underline{a},\mathbf{b})\in J_1(w)$}\},\\
N_1&:=\max\{b_{r,s_1}|\text{$b_{r,s_1}>0$, $1\leq r<s_1$, $(\underline{a},\mathbf{b})\in J_1(w)$}\}.
\end{aligned}
$$
We fix $1\leq r_1<s_1\leq n$ such that $b_{r_1,s_1}=N_1$. Then we repeat the previous argument. After a finite step, we shall get a nonzero linear combination of some  monomials on $t_1(\bi)^{\pm 1},\dots,t_n(\bi)^{\pm 1}$ which is equal to zero. This is again a contradiction. This completes the proof of the Theorem.
\end{proof}

Applying the anti-isomorphism $\ast$, we see that the following set $$
\biggl\{\he(w\bi)\hX_1^{a_1}\dots \hX_n^{a_n}\prod_{\substack{1\leq r<s\leq n\\ i_r\neq i_{s}}}(\hX_r-\hX_{s})^{-b_{r,s}}\widehat{T}_{w,\bi}\he(\bi)\biggm|\begin{matrix}
\text{$w\in\Sym_n, \bi\in I^{\beta}$, $b_{r,s}\in\N$,}\\
\text{$a_1,\dots,a_n\in\Z$, $b_{r,s}>0$ only if either}\\
\text{$a_r=0\geq a_{s}$ or $a_r>0=a_{s}$}\end{matrix}\biggr\}
$$
is a $K$-basis of $\widehat{\H}_\beta(q)$, and the following set $$
\biggl\{\he(w\bi)\hx_1^{a_1}\dots \hx_n^{a_n}\prod_{\substack{1\leq r<s\leq n\\ i_r\neq i_{s}}}(\hx_r-\hx_{s})^{-b_{r,s}}\widehat{w}_{\bi}\he(\bi)\biggm|\begin{matrix}
\text{$w\in\Sym_n, \bi\in I^{\beta}$, $b_{r,s}, a_1,\dots,a_n\in\N$,}\\
\text{$b_{r,s}>0$ only if $a_{s}=0$}\end{matrix}\biggr\}
$$
is a $K$-basis of $\widehat{H}_\beta$.

\begin{cor} \label{nozerodivisor} For any $\bi\in I^{\beta}$, $0\neq f\in\widehat{\H}_\beta(q) \he(\bi)$, $0\neq g\in \he(\bi)\widehat{\H}_\beta(q)$, $0\neq h\in K[\hX_1^{\pm 1},\dots,\hX_n^{\pm 1}]$, we have that $f\he(\bi)h\neq 0\neq h\he(\bi)g$. The same is true if we replace $\widehat{\H}_\beta(q)$ and $K[\hX_1^{\pm 1},\dots,\hX_n^{\pm 1}]$ by $\widehat{H}_\beta$ and $K[\hx_1,\dots,\hx_n]$ respectively.
\end{cor}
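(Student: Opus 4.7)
The strategy is to recognise $\widehat{\H}_\beta\he(\bi)$ as a free right module over a commutative integral domain sitting inside it, so that non-vanishing of $f\he(\bi)h$ reduces to the fact that a nonzero element of a domain is not a zero divisor.

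I would first introduce the commutative subalgebra $R_\bi$ of $\widehat{\H}_\beta$ (viewed as a ring with identity $\he(\bi)$) generated by the elements $\hX_r^{\pm 1}\he(\bi)$ together with $(\hX_r-\hX_s)^{-1}\he(\bi)$ for $i_r\neq i_s$ (replacing $\hX_r$ by $\hx_r$ in the degenerate setting). Under the faithful representation $\widehat{\rho}_\beta$ of Proposition \ref{rep}, $R_\bi$ embeds into $\widetilde{\ppol}(\bi)$ via $\hX_r\he(\bi)\mapsto t_r(\bi)$, so $R_\bi$ is an integral domain, being a subring of a localisation of a polynomial ring. Moreover, in either basis displayed in Lemma \ref{basis1}, the conditions imposed on $(a_1,\ldots,a_n)$ and $(b_{r,s})$ were chosen precisely so that, for each fixed $w$, the allowed monomials $\hX_1^{a_1}\cdots\hX_n^{a_n}\prod(\hX_r-\hX_s)^{-b_{r,s}}\he(\bi)$ form a $K$-basis of $R_\bi$ on the right of $\widehat{T}_{w,\bi}$ (resp.\ $\widehat{w}_\bi$). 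Thus $\widehat{\H}_\beta\he(\bi)=\bigoplus_{w\in\Sym_n}\widehat{T}_{w,\bi}R_\bi$ is free as a right $R_\bi$-module with basis $\{\widehat{T}_{w,\bi}\}_{w\in\Sym_n}$, so right multiplication by any nonzero element of $R_\bi$ acts injectively on $\widehat{\H}_\beta\he(\bi)$.

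Given $0\neq h\in K[X_1^{\pm 1},\ldots,X_n^{\pm 1}]$, the element $h\he(\bi):=h(\hX_1,\ldots,\hX_n)\he(\bi)$ lies in $R_\bi$, and its image in $\widetilde{\ppol}(\bi)$ is $h(t_1(\bi),\ldots,t_n(\bi))$, which is nonzero because the $t_r(\bi)$ are algebraically independent over $K$. Since $\hX_r$ commutes with $\he(\bi)$, we have $f\he(\bi)h=f\cdot(h\he(\bi))$; right multiplication by the nonzero element $h\he(\bi)$ of the domain $R_\bi$ is injective on the free $R_\bi$-module $\widehat{\H}_\beta\he(\bi)$, so $f\he(\bi)h\neq 0$. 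For the remaining inequality $h\he(\bi)g\neq 0$, I would apply the $K$-algebra anti-isomorphism $\ast$ of $\widehat{\H}_\beta$ (which fixes each $\he(\bi)$, $\hX_r$, $\hx_r$, and each $(\hX_r-\hX_s)^{-1}\he(\bi)$): then $(h\he(\bi)g)^*=g^*\he(\bi)h^*=g^*\he(\bi)h$, with $g^*\neq 0$ lying in $\widehat{\H}_\beta\he(\bi)$, which reduces to the previous case. The degenerate statement follows by the same argument, with $\hX_r$, $t_r(\bi)^{\pm 1}$ replaced by $\hx_r$, $t_r(\bi)$, and the basis \eqref{basis315} in place of \eqref{basis314}.

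The principal technical point is the identification of $R_\bi$ with the appropriate localised (Laurent) polynomial ring, simultaneously as an abstract algebra (via the faithful representation) and as a free-module base for $\widehat{\H}_\beta\he(\bi)$ (via Lemma \ref{basis1}); everything else is then a short integral-domain argument.
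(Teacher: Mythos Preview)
Your proposal is correct and is exactly the unpacking of the paper's one-line proof ``This follows directly from Lemma~\ref{basis1}.'' The key observation you make explicit---that the basis of Lemma~\ref{basis1} exhibits $\widehat{\H}_\beta\he(\bi)$ as a free right module over the integral domain $R_\bi$---is precisely what is being used, together with the anti-involution $\ast$ for the second inequality; the only point left slightly implicit (that the conditional monomials span all of $R_\bi$, not just sit inside it) is itself part of the spanning assertion in the proof of Lemma~\ref{basis1}, or alternatively follows from the commutation of $R_\bi$ with each $\hX_k\he(\bi)$ as in the proof of Lemma~\ref{center0}.
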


\begin{proof} This follows directly from Lemma \ref{basis1}.
\end{proof}

Recall the definition of the subalgebra $\widehat{\H}^+_\beta(q)$ of $\widehat{\H}_\beta(q)$ in Remark \ref{modifiedplus}. By a natural restriction of (\ref{000}) to the subalgebra $\widehat{\H}^+_\beta(q)$, we can also get a faithful representation $\rho_q^+$ of $\widehat{\H}^+_\beta(q)$ on
$\oplus_{\bi\in I^\beta}\widetilde{P}_n(\bi)$, where $\widetilde{P}_n(\bi)$ is the localization of $K[t_1(\bi),\dots,t_n(\bi)]$ with respect to (\ref{units0}) for each $\bi$.

\begin{thm}\label{333} The following set $$
\biggl\{\he(w(\bi))\widehat{T}_{w,\bi}\hX_1^{a_1}\dots \hX_n^{a_n}\prod_{\substack{1\leq r<s\leq n\\ i_r\neq i_{s}}}(\hX_r-\hX_{s})^{-b_{r,s}}\he(\bi)\biggm|\begin{matrix}
\text{$w\in\Sym_n, \bi\in I^{\beta}$, $b_{r,s}, a_1,\dots,a_n\in\N$,}\\
\text{$b_{r,s}>0$ only if $a_{s}=0$}\end{matrix}\biggr\}
$$
is a $K$-basis of $\widehat{\H}^+_\beta(q)$.
\end{thm}

\begin{proof} This can be proved in a similar argument to that used in the proof of Theorem \ref{basis1}.
\end{proof}

The bases we have obtained in Theorem \ref{basis1} can be regarded as the modified affine Hecke algebras analogues of the standard bases of the usual affine Hecke algebras given in Lemma \ref{basis2aff}. In the rest of this section, we shall describe the center of these modified affine Hecke algebras. We shall introduce certain multiplicatively closed subsets of these modified affine Hecke algebras and consider their generalized Ore localization in the sense of Appendix A of this paper. This is equivalent to enlarge the modified affine Hecke algebra so that certain elements become locally invertible in the bigger rings.


\begin{dfn} Let $\bi\in I^{\beta}$. If $\widehat{\H}_\beta=\widehat{\H}_\beta(q)$, then we define $$
\widehat{M}(\bi):=\biggl\{\he(\bi)\biggr\}\bigcup\biggl\{(\hX_r-\hX_{s})\he(\bi)\biggm|1\leq r<s\leq n, i_r\neq i_{s}\biggr\}.
$$
If $\widehat{\H}_\beta=\widehat{H}_\beta$, then we define $$
\widehat{M}(\bi):=\biggl\{\he(\bi)\biggr\}\bigcup\biggl\{(\hx_r-\hx_{s})\he(\bi)\biggm|1\leq r<s\leq n, i_r\neq i_{s}\Biggr\}.
$$
Let $\widehat{\Sigma}(\bi)$ be the multiplicative closure of $\widehat{M}(\bi)$.
\end{dfn}

\begin{lem} \label{Local1} All the assumptions and conditions in Lemma \ref{OreLocal} of Appendix A are satisfied if we take $$\begin{aligned}
& A=\check{\H}_\beta(q),\,\, A_0:=K[\hX_k^{\pm1}\he(\bi),\he(\bi)|1\leq k\leq n, \bi\in I^\beta],\\
& J=I^\beta,\,\, e_{\bi}=e(\bi),\,\, S_{\bi}=\widehat{\Sigma}(\bi),\,\,\text{for $\bi\in I^\beta$}.
\end{aligned}$$
In particular, we can construct the generalized Ore localization of $\check{\H}_\beta(q)$ with respect to $(A_0,\{e_{\bi}\}_{\bi\in J}, \{S_{\bi}\}_{\bi\in J})$. Moreover the resulting generalized Ore localization is canonically isomorphic to $\widehat{\H}_\beta(q)$. A similar statement holds if we replace $\check{\H}_\beta(q), \hX_k^{\pm 1}$ and $\widehat{\H}_\beta(q)$  by $\check{H}_\beta, \hx_k$ and $\widehat{H}_\beta$ respectively.
\end{lem}

\begin{proof} We only consider the non-degenerate case as the degenerate case is similar. In view of Lemma \ref{OreLocal}, it suffices to verify the assumptions (O1) and (O2) in Lemma \ref{OreLocal}. Let $1\leq r<s\leq n$. Since
$\check{\H}_\beta(q)\subseteq\underset{\Lambda}{\underleftarrow{\lim}}\,\H_n^\Lambda(q)$ and if $i_r\neq i_{s}$ then $(\hX_r-\hX_{s})\he(\bi)$ is invertible in $\he(\bi)\biggl(\underset{\Lambda}{\underleftarrow{\lim}}\,\H_n^\Lambda(q)\biggr)\he(\bi)$ (by the discussion in the paragraph above Definition \ref{MAHA}), it follows that the assumption (O1) is satisfied.

It remains to verify the assumption (O2). Let $1\leq r<k\leq n$ and $\bi\in I^\beta$ with $i_r\neq i_k$. It is clear that $(\hX_r-\hX_{k})\he(\bi)$ commutes with any $\hX_s\he(\bj)$. Thus it suffices to verify the assumption (O2) in Lemma \ref{OreLocal} for $a:=\he(w\bi)\hT_{w,\bi}\he(\bi)$, where $w\in\Sym_n$, $\bi\in I^\beta$.

Let $1\neq z$ be a product of some elements in the following set: $$
\{\hX_r-\hX_s|1\leq r<s\leq n, i_r\neq i_s\}  .
$$
We define $$
\Sym(\bi):=\{\sigma\in\Sym_n|\sigma\bi=\bi\}, \quad\, H(\bi):=\prod_{\sigma\in\Sym(\bi)}(\sigma w^{-1})(z)\he(\bi) .
$$
We claim that $0\neq \he(w\bi)\hT_{w,\bi}\he(\bi)H(\bi)\in z\he(w\bi)\check{\H}_{\beta}\he(\bi)$.

First, using the basis Theorem \ref{basis1}, we can see that $\he(w\bi)\hT_{w,\bi}\he(\bi)H(\bi)\neq 0$. Recall that $$\widehat{T}_{w,\bi}:=\bigl(\he(w\bi)\hT_{j_1}\he(s_{j_1}w\bi)\bigr)
\bigl(\he(s_{j_1}w\bi)\hT_{j_2}\he(s_{j_2}s_{j_1}w\bi)\bigr)\dots \bigl(\he(s_{j_k}\bi)\hT_{j_k}\he(\bi)\bigr),$$ where $s_{j_1}\dots s_{j_k}$ is a prefixed reduced expression of $w$. If $i_{j_k}\neq i_{j_k+1}$, then by the defining relations in (\ref{IJ5}) we can get that \begin{equation}\label{commuExchange}
\he(s_{j_k}\bi)\hT_{j_k}\he(\bi)H(\bi)=\he(s_{j_k}\bi)(s_{j_k}H(\bi))\hT_{j_k}\he(\bi) .
\end{equation}

If $i_{j_k}=i_{j_k+1}$, i.e., $s_{j_k}\bi=\bi$ and hence $s_{j_k}\in\Sym(\bi)$. Then we know that $H(\bi)$ is symmetric in $\hX_{j_k}, \hX_{j_k+1}$. It follows from the defining relations (\ref{IJ5}) again that $$
\he(s_{j_k}\bi)\hT_{j_k}\he(\bi)H(\bi)=\he(\bi)H(\bi)\hT_{j_k}\he(\bi)=
\he(\bi)\prod_{\sigma\in\Sym(\bi)}(\sigma w^{-1})(z)\he(\bi)\hT_{j_k}e(\bi) .
$$
Note that in this case $s_{j_k}\Sym(\bi)=\Sym(\bi)$, so we can write the above equality as $\he(s_{j_k}\bi)\hT_{j_k}\he(\bi)H(\bi)=\he(s_{j_k}\bi)(s_{j_k}H(\bi))\he(s_{j_k}\bi)\hT_{j_k}\he(\bi)$. In other words, the equality (\ref{commuExchange}) still holds in this case.

By an easy induction on $\ell(w)$, we see that $$
\he(w\bi)\hT_{w,\bi}\he(\bi)H(\bi)=\he(w\bi)(wH(\bi))\he(w\bi)\hT_{w,\bi}\he(\bi) .
$$
Note that $\he(w\bi)(wH(\bi))=\he(w\bi)\prod_{\sigma\in\Sym(\bi)}(w\sigma w^{-1})(z)\he(w\bi)$ which has $z\he(w\bi)$ as a left factor. This proves our claim.
Finally, applying the anti-involution $\ast$ of $\widehat{\H}_\beta(q)$, we see that $$0\neq H(\bi)\he(\bi)\hT_{w^{-1},\bi}\he(w\bi)\in \he(\bi)\check{\H}_{\beta}\he(w\bi)z. $$
Thus the assumption (O2) is satisfied. Therefore, this lemma is a direct consequence of Lemma \ref{OreLocal}.
\end{proof}


For any $1\leq k\leq n$, $\bi\in I^\beta$ and $w\in\Sym_n$, we define $w(\hX_k^{\pm 1}\he(\bi)):=\hX_{w(k)}^{\pm 1}\he(w\bi)$, $w(\hx_k\he(\bi)):=\hx_{w(k)}\he(w\bi)$. By Lemma \ref{zeiKeylem}, this is well-defined and extends uniquely to an action of $\Sym_n$ on the set of polynomials in $\{\hX_k^{\pm 1}\he(\bi)|1\leq k\leq n, \bi\in I^{\beta}\}$ and on the set of polynomials in $\{\hx_k\he(\bi)|1\leq k\leq n, \bi\in I^{\beta}\}$ respectively.

\begin{dfn} An element $f$ in $K[\hX_k^{\pm 1}\he(\bi),\he(\bi)|1\leq k\leq n,\bi\in I^\beta]$ (respectively, in $K[\hx_k\he(\bi),\he(\bi)|1\leq k\leq n,\bi\in I^\beta]$) is said to be symmetric in $\{\hX_k^{\pm 1}\he(\bi),\he(\bi)|1\leq k\leq n, \bi\in I^{\beta}\}$ (respectively, in  $\{\hx_k\he(\bi),\he(\bi)|1\leq k\leq n, \bi\in I^{\beta}\}$) if $w(f)=f$ for any $w\in\Sym_n$.
\end{dfn}

\begin{lem} Any symmetric element in $K[\hX_k^{\pm 1}\he(\bi),\he(\bi)|1\leq k\leq n,\bi\in I^\beta]$ is in the center of  $Z(\widehat{\H}_\beta(q))$.
Any symmetric element in $K[\hx_k\he(\bi),\he(\bi)|1\leq k\leq n,\bi\in I^\beta]$ is in the center of  $Z(\widehat{H}_\beta(q))$.
\end{lem}

\begin{proof} We only prove the non-degenerate case as the degenerate case is similar. By the relations (\ref{TREI}) and (\ref{IJ5}), it is easy to see that if $i_r=i_{r+1}$, then $$\begin{aligned}
&\he(\bi)\hT_r\he(\beta)=\hT_r\he(\beta)\he(\bi),\quad \hX_r\hX_{r+1}\he(\bi)\hT_r\he(\beta)=\hT_r\he(\beta)\hX_r\hX_{r+1}\he(\bi),\\
&(\hX_r+\hX_{r+1})\he(\bi)\hT_r\he(\beta)=\hT_r\he(\beta)(\hX_r+\hX_{r+1})\he(\bi) ;
\end{aligned}$$
while if $i_r\neq i_{r+1}$, then $$\begin{aligned}
(\he(\bi)+\he(s_r\bi))\bigr)\hT_r\he(\beta)&=\hT_r\he(\beta)\bigl(\he(\bi)+\he(s_r\bi)\bigr),\\ \bigl(\hX_r\hX_{r+1}(\he(\bi)+\he(s_r\bi))\bigr)\hT_r\he(\beta)&=\hT_r\he(\beta)\bigl((\he(\bi)+\he(s_r\bi))\hX_r\hX_{r+1}\he(\beta)\bigr).\end{aligned}
$$
In view of the relations (\ref{XkEi}), (\ref{FEI}), (\ref{TREI}), (\ref{IJ5}), (\ref{IJ5a}), (\ref{IJ5b}), it suffices to show that for any $1\leq r<n$ and any $\bi\in I^\beta$ with $i_r\neq i_{r+1}$, $$
\bigl(\hX_r\he(\bi)+\hX_{r+1}\he(s_r\bi)\bigr)\hT_r\he(\beta)=\hT_r\he(\beta)\bigl(\hX_r\he(\bi)+\hX_{r+1}\he(s_r\bi)\bigr),\\
$$

In fact, the left-hand side of the above equality is equal to $$\begin{aligned}
&\quad\,\bigl(\hX_r\he(\bi)+\hX_{r+1}\he(s_r\bi)\bigr)\hT_r(\he(\bi)+\he(s_r\bi))\\
&=\he(\bi)\hX_r\hT_r\he(\bi)+\he(s_r\bi)\hX_{r+1}\hT_r\he(s_r\bi)+\he(\bi)\hX_r\hT_r\he(s_r\bi)+\he(s_r\bi)\hX_{r+1}\hT_r\he(\bi)\\
&=\he(\bi)\hT_r\hX_r\he(\bi)+\he(s_r\bi)\hT_r\hX_{r+1}\he(s_r\bi)+\he(\bi)(\hT_r\hX_{r+1}-(q-1)\hX_{r+1})\he(s_r\bi)\\
&\qquad +\he(s_r\bi)(\hT_r\hX_{r}+(q-1)\hX_{r+1})\he(\bi)\\
&=\he(\bi)\hT_r\hX_r\he(\bi)+\he(s_r\bi)\hT_r\hX_{r+1}\he(s_r\bi)+\he(\bi)\hT_r\hX_{r+1}\he(s_r\bi)+\he(s_r\bi)\hT_r\hX_{r}\he(\bi)\\
&=\hT_r\he(\beta)\bigl(\hX_r\he(\bi)+\hX_{r+1}\he(s_r\bi)\bigr),\end{aligned}
$$
as required, where we have used the fact $\he(\bi)\hX_{r+1}\he(s_r\bi)=0=\he(s_r\bi)\hX_{r+1}\he(\bi)$ (because $i_r\neq i_{r+1}$) in the third equality.
\end{proof}

The next result describe the center for the modified affine Hecke algebras $\widehat{\H}_\beta(q)$ and $\widehat{H}_\beta$.

\begin{thm} \label{center0} The center $Z(\widehat{\H}_\beta(q))$ of $\widehat{\H}_\beta(q)$ is equal to $$
\Biggl\{\sum_{\bi\in I^\beta}f(\bi)g(\bi)^{-1}\he(\bi)\Biggm|\begin{matrix}\text{$f(\bi)\in K[\hX_k^{\pm 1}\he(\bj),\he(\bj)|1\leq k\leq n, \bj\in I^\beta]$}\\
\text{$g(\bi)=\prod_{\substack{1\leq r<s\leq n\\ i_r\neq i_s}}(\hX_r-\hX_s)^{d_{r,s,\bi}}, d_{r,s,\bi}\in\N$,}\\
\text{$\sigma(f(\bi))\sigma(g(\bi))^{-1}=f(\sigma\bi)g(\sigma\bi)^{-1}, \forall\,\sigma\in\Sym_n$.}
\end{matrix}\Biggr\} ,
$$
and $Z(\widehat{\H}_\beta(q))\cap K[\hX_k^{\pm 1}\he(\bj),\he(\bj)|1\leq k\leq n, \bj\in I^\beta]$ is the set of symmetric elements in $K[\hX_k^{\pm 1}\he(\bj),\he(\bj)|1\leq k\leq n, \bj\in I^\beta]$. Similarly, the center $Z(\widehat{H}_\beta)$ of $\widehat{H}_\beta$ is equal to $$
\Biggl\{\sum_{\bi\in I^\beta}f(\bi)g(\bi)^{-1}\he(\bi)\Biggm|\begin{matrix}\text{$f(\bi)\in K[\hx_k\he(\bj),\he(\bj)|1\leq k\leq n, \bj\in I^\beta]$}\\
\text{$g(\bi)=\prod_{\substack{1\leq r<s\leq n\\ i_r\neq i_s}}(\hx_r-\hx_s)^{d_{r,s,\bi}}, d_{r,s,\bi}\in\N$,}\\
\text{$\sigma(f(\bi))\sigma(g(\bi))^{-1}=f(\sigma\bi)g(\sigma\bi)^{-1}, \forall\,\sigma\in\Sym_n$.}
\end{matrix}\Biggr\} ,
$$
and $Z(\widehat{H}_\beta(q))\cap K[\hx_k\he(\bj),\he(\bj)|1\leq k\leq n, \bj\in I^\beta]$ is the set of symmetric elements in $K[\hx_k\he(\bj),\he(\bj)|1\leq k\leq n, \bj\in I^\beta]$.
\end{thm}

\begin{proof} We only prove the theorem in the non-degenerate case, while the degenerate case is similar.

Suppose that $z=\sum_{\bi\in I^\beta}f(\bi)g(\bi)^{-1}\he(\bi)$, where $f(\bi)\in K[\hX_k^{\pm 1}\he(\bj),\he(\bj)|1\leq k\leq n, \bj\in I^\beta]$,
$g(\bi)=\prod_{\substack{1\leq r<s\leq n\\ i_r\neq i_s}}(\hX_r-\hX_s)^{d_{r,s,\bi}}, d_{r,s,\bi}\in\N$ for each pair $(r,s)$, and $\sigma(f(\bi))\sigma(g(\bi))^{-1}=f(\sigma\bi)g(\sigma\bi)^{-1}, \forall\,\sigma\in\Sym_n$.

By the relations (\ref{6}), (\ref{7}), it is easy to see that for any $1\leq r<n$, \begin{equation}\label{222}
\he(\bi)f(\bi)\hT_r\he(s_r\bi)-\he(\bi)\hT_rs_r(f(\bi))\he(s_r\bi)=(q-1)e(\bi)\hX_{r+1}\frac{f(\bi)-s_r(f(\bi))}{\hX_{r+1}-\hX_r}\he(s_r\bi),
\end{equation}
and \begin{equation}\label{222right}
\he(\bi)g(\bi)\hT_r\he(s_r\bi)-\he(\bi)\hT_rs_r(g(\bi))\he(s_r\bi)=(q-1)e(\bi)\hX_{r+1}\frac{g(\bi)-s_r(g(\bi))}{\hX_{r+1}-\hX_r}\he(s_r\bi).
\end{equation}

Now, we have that $$\begin{aligned}
&\quad\,\he(\bi)f(\bi)g(\bi)^{-1}\he(\bi)\hT_r\he(s_r\bi)-\he(\bi)\hT_r\he(s_r\bi)f(s_r\bi)g(s_r\bi)^{-1}\he(s_r\bi)\\
&=g(\bi)^{-1}\he(\bi)\Bigl(f(\bi)\he(\bi)\hT_rg(s_r\bi)\he(s_r\bi)-\he(\bi)g(\bi)\hT_r\he(s_r\bi)f(s_r\bi)\Bigr)g(s_r\bi)^{-1}\he(s_r\bi)\\
&=g(\bi)^{-1}\he(\bi)\Bigl(\he(\bi)\hT_rs_r(f(\bi))g(s_r\bi)\he(s_r\bi)-\he(\bi)\hT_rs_r(g(\bi))\he(s_r\bi)f(s_r\bi)\Bigr)g(s_r\bi)^{-1}\he(s_r\bi)\\
&\qquad+(q-1)e(\bi)\hX_{r+1}\frac{(f(\bi)-s_r(f(\bi)))g(s_r\bi)-(g(\bi)-s_r(g(\bi)))f(s_r\bi)}{\hX_{r+1}-\hX_r}\he(s_r\bi)\\
&=(q-1)e(\bi)\hX_{r+1}\Bigl(\frac{(f(\bi)g(s_r\bi)-g(\bi)f(s_r\bi))-(s_r(f(\bi))g(s_r\bi)-s_r(g(\bi)))f(s_r\bi)}{\hX_{r+1}-\hX_r}\Bigr)\he(s_r\bi) ,
\end{aligned}
$$
where we have used the assumption that $s_r(f(\bi))\sigma(g(\bi))^{-1}=f(\sigma\bi)g(\sigma\bi)^{-1}$ ($\forall\,1\leq r<n$) in the last equality.

If $j_r\neq j_{r+1}$, then the last term is zero because $e(\bi)he(s_r\bi)=0$ for any $f\in K[\hX_k^{\pm 1}\he(\bj),\he(\bj)|1\leq k\leq n,\bj\in I^\beta]$, which implies that $\he(\bi)f(\bi)g(\bi)^{-1}\he(\bi)\hT_r\he(s_r\bi)-\he(\bi)\hT_r\he(s_r\bi)f(s_r\bi)g(s_r\bi)^{-1}\he(s_r\bi)=0$ .

If $j_r=j_{r+1}$, then the term in the big bracket is zero by the condition that $f(\bi)g(\bi)^{-1}=f(s_r\bi)g(s_r\bi)^{-1}=s_rf(\bi)s_r(g(\bi))^{-1}$.

Therefore,  $\he(\bi)f(\bi)g(\bi)^{-1}\he(\bi)\hT_r\he(s_r\bi)-\he(\bi)\hT_r\he(s_r\bi)f(s_r\bi)g(s_r\bi)^{-1}\he(s_r\bi)=0$ always holds, which implies that
$z\he(\bi)T_r\he(s_r\bi)=\he(\bi)T_r\he(s_r\bi)z$ and hence $z\in Z(\widehat{\H}_\beta(q))$ as required.

Conversely, suppose that $z=\sum_{\bi,w}\he(w\bi)\widehat{T}_{w,\bi}f_w\in Z(\widehat{\H}_\beta(q))$, where $$
f_w\in K[\hX_1^{\pm 1}\he(\bi),\dots,\hX_n^{\pm 1}\he(\bi),(\hX_r-\hX_{s})^{-1}\he(\bi)|\bi\in I^\beta, 1\leq r<s\leq n, i_r\neq i_{s}].
$$
Since $\he(\bi)z=z\he(\bi)$, we can rewrite $z$ as $$
z=\sum_{\substack{\bi\in I^{\beta}, w\in\Sym_n\\ w(\bi)=\bi}}\he(\bi)\widehat{T}_{w,\bi}f_w\he(\bi)\in Z(\widehat{\H}_\beta(q)) .
$$

Suppose that $z\notin K[\hX_1^{\pm 1}e(\bi),\dots,\hX_n^{\pm 1}e(\bi),(\hX_r-\hX_{s})^{-1}\he(\bi)|\bi\in I^\beta, 1\leq r<s\leq n, , i_r\neq i_{s}]$. Let $u$ be maximal with respect to the Bruhat partial order ``$<$" such that $f_u\neq 0$, $u(\bi)=\bi$ and $u\neq 1$. Then $u(r)\neq r$ for some $1\leq r\leq n$.
By definition of center, we have that $\hX_r\he(\beta)z=z\hX_r\he(\beta)$. However, by an easy induction based on Lemma \ref{affAct1b}, we can get that $$
\hX_r\he(\bi)\widehat{T}_{u,\bi}\he(\bi)=\he(\bi)\hX_r\widehat{T}_{u,\bi}\he(\bi)=e(\bi)\Bigl(\widehat{T}_{u,\bi}\hX_{u^{-1}r}+\sum_{w'<u}\widehat{T}_{w',\bi}g_{w'}\Bigr)\he(\bi),
$$
where $g_{w'}\in K[\hX_1,\dots,\hX_n]\he(\beta)$. It follows that the coefficient of $\he(\bi)\widehat{T}_{u,\bi}\hX_{u^{-1}r}f_u\he(\bi)$ is different in $\hX_rz$ and $z\hX_r$, a contradiction.
Therefore, $$
z\in K[\hX_1^{\pm 1}\he(\bi),\dots,\hX_n^{\pm 1}\he(\bi),(\hX_r-\hX_{s})^{-1}\he(\bi)|\bi\in I^{\beta}, 1\leq r<s\leq n, i_r\neq i_{s}] .
$$
We can write $z=\sum_{\bi\in I^\beta}f(\bi)g(\bi)^{-1}\he(\bi)$, where $f(\bi)\in K[\hX_k^{\pm 1}\he(\bj),\he(\bj)|1\leq k\leq n, \bj\in I^\beta]$,
$g(\bi)=\prod_{\substack{1\leq r<s\leq n\\ i_r\neq i_s}}(\hX_r-\hX_s)^{d_{r,s,\bi}}, d_{r,s,\bi}\in\N$ for each pair $(r,s)$, and $\sigma(f(\bi))\sigma(g(\bi))^{-1}=f(\sigma\bi)g(\sigma\bi)^{-1}, \forall\,\sigma\in\Sym_n$.

For each $1\leq r<n$, $z\he(\bi)\hT_r\he(s_r\bi)=\he(\bi)\hT_r\he(s_r\bi)z$, which implies (by the previous proof) that $$\begin{aligned}
&\quad\,(q-1)e(\bi)\hX_{r+1}\Bigl(\frac{(f(\bi)g(s_r\bi)-g(\bi)f(s_r\bi))-(s_r(f(\bi))g(s_r\bi)-s_r(g(\bi)))f(s_r\bi)}{\hX_{r+1}-\hX_r}\Bigr)\he(s_r\bi)\\
&=\he(\bi)f(\bi)g(\bi)^{-1}\he(\bi)\hT_r\he(s_r\bi)-\he(\bi)\hT_r\he(s_r\bi)f(s_r\bi)g(s_r\bi)^{-1}\he(s_r\bi)=0.
\end{aligned}
$$
If $i_r=i_{r+1}$ then $s_r(\bi)=\bi$ and by the basis Theorem \ref{basis1}, we can deduce that $$
s_r(f(\bi)g(\bi)^{-1})=f(\bi)g(\bi)^{-1}=f(s_r\bi)g(s_r\bi)^{-1} .
$$

if $i_r\neq i_{r+1}$, then by the previous proof and the fact that $e(\bi)he(s_r\bi)=0$ for any $f\in K[\hX_k^{\pm 1}\he(\bj),\he(\bj)|1\leq k\leq n,\bj\in I^\beta]$, we have that $$\begin{aligned}
0&=\he(\bi)f(\bi)g(\bi)^{-1}\he(\bi)\hT_r\he(s_r\bi)-\he(\bi)\hT_r\he(s_r\bi)f(s_r\bi)g(s_r\bi)^{-1}\he(s_r\bi)\\
&=g(\bi)^{-1}\he(\bi)\Bigl(f(\bi)\he(\bi)\hT_rg(s_r\bi)\he(s_r\bi)-\he(\bi)g(\bi)\hT_r\he(s_r\bi)f(s_r\bi)\Bigr)g(s_r\bi)^{-1}\he(s_r\bi)\\
&=g(\bi)^{-1}\he(\bi)\Bigl(\he(\bi)\hT_rs_r(f(\bi))g(s_r\bi)\he(s_r\bi)-\he(\bi)\hT_rs_r(g(\bi))\he(s_r\bi)f(s_r\bi)\Bigr)g(s_r\bi)^{-1}\he(s_r\bi),
\end{aligned}
$$
which (by the basis Theorem \ref{basis1} again) implies that
$$f(s_r\bj)g(s_r\bi)^{-1}=s_r(f(\bi))s_r(g(\bi))^{-1},
$$
as required. This completes the proof of the first statement of the theorem in the non-degenerate case. The second statement follows from the first statement and the basis Theorem \ref{basis1}.
\end{proof}


\begin{cor} \label{center000} The center $Z(\widehat{\HH}^+_\beta)$ of $\widehat{\HH}^+_\beta$ is $$
\Biggl\{\sum_{\bi\in I^\beta}f(\bi)g(\bi)^{-1}\he(\bi)\Biggm|\begin{matrix}\text{$f(\bi)\in K[\hX_k\he(\bj),\he(\bj)|1\leq k\leq n, \bj\in I^\beta]$}\\
\text{$g(\bi)=\prod_{\substack{1\leq r<s\leq n\\ i_r\neq i_s}}(\hX_r-\hX_s)^{d_{r,s,\bi}}, d_{r,s,\bi}\in\N$,}\\
\text{$\sigma(f(\bi))\sigma(g(\bi))^{-1}=f(\sigma\bi)g(\sigma\bi)^{-1}, \forall\,\sigma\in\Sym_n$.}
\end{matrix}\Biggr\} ,
$$
\end{cor}

\begin{proof} This follows from Theorem \ref{333} and a similar argument to that used in the proof Theorem \ref{center0}.
\end{proof}

\begin{cor} \label{center00a} We have that $$\begin{aligned}
&K[\hX_k\he(\bi),\he(\bi)|1\leq k\leq n, \bi\in I^\beta]\cap Z(\widehat{\H}_\beta(q))=\\
&\qquad\qquad K[\hX_k\he(\bi),\he(\bi)|1\leq k\leq n, \bi\in I^\beta]\cap Z(\widehat{\H}^+_\beta(q)),
\end{aligned}$$  which is equal to the set of symmetric elements in $\{\hX_k\he(\bi),\he(\bi)|1\leq k\leq n, \bi\in I^\beta\}$.
\end{cor}

\begin{proof} This follows from Corollary \ref{center000} and (\ref{333}).
\end{proof}

\begin{dfn} Let $\bj\in I^{\beta}$. If $\widehat{\H}_\beta=\widehat{\H}_\beta(q)$, then we define $$
M_n(\bj):=\bigl\{\he(\bj)\bigr\}\cup\biggl\{(\hX_r-q\hX_{s})\he(\bj)\Biggm|1\leq r\neq s\leq n, \bj\in I^\beta, j_r\neq 1+j_{s}\biggr\}.
$$
If $\widehat{\H}_\beta=\widehat{H}_\beta$, then we define $$
M'_n(\bj):=\bigl\{\he(\bj)\bigr\}\cup\biggl\{(\hx_r-\hx_{s}-1)\he(\bj)\biggm|1\leq r\neq s\leq n, \bj\in I^\beta, j_r\neq 1+j_{s}\biggr\}.
$$
Let $\Sigma_n(\bj)$ and $\Sigma'_n(\bj)$ be the multiplicative closures of $M_n(\bj)$ and $M'_n(\bj)$ respectively.
\end{dfn}

\begin{dfn2} \label{Localised1} All the assumptions and conditions in Lemma \ref{OreLocal} of the Appendix are satisfied if we take $$\begin{aligned}
& A=\widehat{\H}_\beta(q),\,\,J:=I^\beta,\,\,e_\bi:=\he(\bi),\,\,S_\bi:=\Sigma_n(\bi),\,\,\text{for $\bi\in I^\beta$},\\
& A_0:=K[\hX_k^{\pm 1}\he(\bj),\he(\bj),(\hX_a-\hX_b)^{-1}\he(\bi)|1\leq k\leq n, 1\leq a<b\leq n, \bi,\bj\in I^\beta, i_a\neq i_b].
\end{aligned}$$
Moreover, the resulting generalized Ore localization $\widetilde{\H}_\beta(q)$ is canonically isomorphic to the subalgebra of $\underset{\Lambda}{\underleftarrow{\lim}}\,\H_n^\Lambda(q)$ generated by $\widehat{\H}_\beta(q)$ and the elements in the following subset \begin{equation}\label{unit}
\biggl\{(\hX_r-q\hX_{s})^{-1}\he(\bj)\biggm|1\leq r\neq s\leq n, \bj\in I^\beta, j_r\neq 1+j_{s}\biggr\} .
\end{equation}
A similar statement holds if we replace $\Sigma_n(\bj), \widehat{\H}_\beta(q)$ and $\widetilde{\H}_\beta(q)$ by $\Sigma'_n(\bj), \widehat{H}_\beta$ and $\widetilde{H}_\beta$ respectively. In particular, the generalized Ore localization $\widetilde{H}_\beta$ is canonically isomorphic to the subalgebra of $\underset{\Lambda}{\underleftarrow{\lim}}\,H_n^\Lambda$ generated by $\widehat{H}_\beta$ and the elements in the following subset \begin{equation}\label{unitb}
\biggl\{(\hx_r-\hx_{s}-1)^{-1}\he(\bj)\biggm|1\leq r\neq s\leq n, \bj\in I^\beta, j_r\neq 1+j_{s}\biggr\}.
\end{equation}
\end{dfn2}

\begin{proof} This follows from a similar argument in the proof of Lemma \ref{Local1}.
\end{proof}

\begin{dfn} Let $\bj\in I^{\beta}$. In the non-degenerate setting, we define $$
\hat{M}_n(\bj):=\Biggl\{\begin{matrix}\bigl((1-y_r)-q^{j_{s}-j_r+1}(1-y_s)\bigr)e(\bj), e(\bj),\\
\bigl((1-y_a)-q^{i_b-i_a}(1-y_b)\bigr)e(\bi)\end{matrix}\Biggm|\begin{matrix}1\leq r,s,a,b\leq n, \bi,\bj\in I^\beta,\\
a\neq b, j_r\neq 1+j_{s},i_a\neq i_b \end{matrix}\Bigr\};
$$
while in the degenerate setting, we define $$
\hat{M}'_n(\bj):=\Biggl\{\begin{matrix}(j_r-j_s-1-y_s+y_r)e(\bj),\\
\bigl(i_a-i_b+y_a-y_b\bigr)e(\bi)\end{matrix}\Biggm|\begin{matrix}1\leq r,s,a,b\leq n, \bi,\bj\in I^\beta, \\ a\neq b, j_r\neq 1+j_{s}, i_a\neq i_b\end{matrix}\Bigr\} .
$$
Let $\hat{\Sigma}_n(\bj)$ and $\hat{\Sigma}'_n(\bj)$ be the multiplicative closures of $\hat{M}_n(\bj)$ and in $\hat{M}'_n(\bj)$
respectively.
\end{dfn}

In a similar way as Theorem \ref{Localised1}, we are going to use Lemma \ref{OreLocal} to construct, in the non-degenerate setting, a bigger ring $\widetilde{\mathscr{R}}_\beta$ which contains ${\mathscr{R}}_\beta$ and the elements in following subset \begin{equation}\label{unit2}
\Biggl\{\begin{matrix}\bigl((1-y_r)-q^{j_{s}-j_r+1}(1-y_s)\bigr)^{-1}e(\bj),\\
\bigl((1-y_a)-q^{i_b-i_a}(1-y_b)\bigr)^{-1}e(\bi)\end{matrix}\Biggm|\begin{matrix}1\leq r,s,a,b\leq n, \bi,\bj\in I^\beta,\\  a\neq b, j_r\neq 1+j_{s},i_a\neq i_b \end{matrix}\Biggr\};
\end{equation}
and in the degenerate setting, a bigger ring $\widetilde{\mathscr{R}}'_\beta$ which contains ${\mathscr{R}}_\beta$ and the elements in following subset \begin{equation}\label{unit3}
\Biggl\{\begin{matrix}(j_r-j_s-1-y_s+y_r)^{-1}e(\bj),\\
\bigl(i_a-i_b+y_a-y_b\bigr)^{-1}e(\bi)\end{matrix}\Biggm|\begin{matrix}1\leq r,s,a,b\leq n, \bi,\bj\in I^\beta,\\ a\neq b, j_r\neq 1+j_{s}, i_a\neq i_b\end{matrix}\Biggr\}.
\end{equation}

For each $w\in\Sym_n$, we fix a reduced expression $s_{j_1}\dots s_{j_k}$ of $w$ and define $$
\psi_w:=\psi_{j_1}\dots\psi_{j_k}.
$$

\begin{lem} \label{basis4} (cf. \cite{KhovLaud:diagI}) The following set $$
\bigl\{\psi_wy_1^{a_1}\dots y_n^{a_n}e(\bi)\bigm|w\in\Sym_n, \bi\in I^{\beta}, a_1,\dots,a_n\in\N\bigr\}
$$
is a $K$-basis of $\mathscr{R}_\beta$.
\end{lem}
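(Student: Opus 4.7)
\medskip

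\noindent\textbf{Proof plan.} Since this is attributed to Khovanov--Laud, I would follow the now-standard two-step argument: first show spanning, then establish linear independence via a faithful polynomial representation.

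For spanning, the plan is to show that any word $w$ in the generators $\{\psi_r, y_s, e(\bi)\}$ can be rewritten as a $K$-linear combination of elements of the prescribed straight form. Using the orthogonality $e(\bi)e(\bj)=\delta_{\bi\bj}e(\bi)$ and the commutation rules $\psi_r e(\bi) = e(s_r\bi)\psi_r$ and $y_r e(\bi) = e(\bi) y_r$, we may assume $w = x_1 x_2 \cdots x_N \cdot e(\bi)$ with each $x_i \in \{\psi_r, y_s\}$. The two relations
\[\psi_r y_{r+1} e(\bi) = (y_r \psi_r + \delta_{i_r i_{r+1}}) e(\bi), \qquad y_{r+1} \psi_r e(\bi) = (\psi_r y_r + \delta_{i_r i_{r+1}}) e(\bi),\]
together with $\psi_r y_s = y_s \psi_r$ for $s \neq r, r+1$, allow us to move every $y$ past every $\psi$ on its right at the cost of an error term with strictly fewer $\psi$'s. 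After finitely many such moves, $w$ becomes a linear combination of $\psi_{r_1}\cdots\psi_{r_k} y_1^{a_1}\cdots y_n^{a_n} e(\bi')$. Finally, the quadratic relation and the Coxeter-type relation $\psi_r\psi_{r+1}\psi_r = \psi_{r+1}\psi_r\psi_{r+1} + (\text{lower order in }\psi)$ allow us to replace any $\psi$-word by $\psi_w$ for $w \in \Sym_n$ together with correction terms involving strictly fewer $\psi$'s and some $y$'s; induction on the length of the $\psi$-word completes the reduction.

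For linear independence, the plan is to exhibit a faithful representation of $\mathscr{R}_\beta$ on
\[\bigoplus_{\bi \in I^\beta} K[y_1(\bi), \ldots, y_n(\bi)],\]
where $e(\bj)$ acts as projection onto the $\bj$-summand, $y_s$ acts as multiplication by $y_s(\bi)$, and $\psi_r$ acts between the $\bi$- and $s_r\bi$-summands as the appropriate Demazure-type operator: if $i_r = i_{r+1}$ one uses the divided difference $(f - s_r f)/(y_r - y_{r+1})$, and otherwise one uses a suitable variant determined by the polynomials attached to the edges $i_r \to i_{r+1}$, $i_r \leftarrow i_{r+1}$, $i_r \rightleftarrows i_{r+1}$, normalized so as to satisfy the KLR quadratic and braid relations. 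The key verification is that the two mixed braid relations of Definition~\ref{D:QuiverRelations} hold on polynomials; this is a direct (though tedious) calculation comparing coefficients, of exactly the same flavour as the braid verification carried out in Step~1 of the proof of Proposition~\ref{rep}. Once the representation is shown to exist, one checks that the images of the proposed basis elements, applied to the constant polynomial $1 \in K[y_1(\bi),\ldots,y_n(\bi)]$ and to various monomials, produce a triangular system with respect to an ordering by $(w, \bi)$ refining the Bruhat order on $\Sym_n$ and the partition of $y$-monomials by degree. This forces $K$-linear independence.

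The main obstacle is the braid-relation verification in the linear independence step: the three cases according to the type of arrows between $i_r, i_{r+1}, i_{r+2}$ each require a separate calculation, and in the double-arrow case $i_r = i_{r+2} \rightleftarrows i_{r+1}$ the inhomogeneous term $y_r - 2y_{r+1} + y_{r+2}$ must appear with exactly the right coefficient. Everything else (the spanning argument and the extraction of linear independence from the polynomial representation) is routine once that verification is in hand.
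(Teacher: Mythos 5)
The paper itself gives no proof of this lemma; it simply cites Khovanov--Lauda \cite{KhovLaud:diagI}, where this is their basis theorem for the quiver Hecke algebra. Your plan reproduces exactly the Khovanov--Lauda argument (straightening via the defining relations for spanning, then a faithful polynomial representation via Demazure-type operators for linear independence), so the approach is the right one — though the final ``triangularity'' step is stated loosely: since the action of $\psi_r$ on the polynomial module can lower, preserve, or raise degree depending on whether $i_r=i_{r+1}$ or on the arrows between them, the ordering has to be set up with some care as in \cite{KhovLaud:diagI}, not simply by raw $y$-degree.
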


\begin{dfn2} \label{Localised2} All the assumptions and conditions in Lemma \ref{OreLocal} are satisfied if we take $$\begin{aligned}
& A=\mathscr{R}_\beta,\,\,A_0:=K[y_ke(\bi),e(\bi)|1\leq k\leq n,\bi\in I^\beta],\\
& J:=I^\beta,\,\,\, e_\bi:=e(\bi),\,\,S_\bi:=\hat{\Sigma}_n(\bi),\,\,\text{for $\bi\in I^\beta$}.
\end{aligned}$$
In particular, we can embedded ${\mathscr{R}}_\beta$ into $\widetilde{\mathscr{R}}_\beta:=\widetilde{A}$ which is generated by elements in ${\mathscr{R}}_\beta$ together with the elements in the subset (\ref{unit2}). A similar statement holds if we replace $\hat{\Sigma}_n(\bj), \mathscr{R}_\beta,
\widetilde{\mathscr{R}}_\beta$ and (\ref{unit2}) by $\hat{\Sigma}'_n(\bj), \mathscr{R}_\beta,
\widetilde{\mathscr{R}}'_\beta$ and (\ref{unit3}) respectively.
\end{dfn2}

\begin{proof} It suffices to show that for any $w\in\Sym_n$, $\bi\in I^\beta$, $e(w\bi)\neq ze(w\bi)\in\hat{\Sigma}_n(w\bi)$, \begin{align}
 e(w\bi)\psi_we(\bi)\hat{\Sigma}_n(\bi)\bigcap ze(w\bi){\mathscr{R}}_{\beta}e(\bi)&\neq\emptyset ,\label{323}
\end{align}
where $z$ is a product of some elements in the following set $$
\Biggl\{\begin{matrix}\bigl((1-y_r)-q^{j_{s}-j_r+1}(1-y_s)\bigr)^{-1},\\
\bigl((1-y_a)-q^{i_b-i_a}(1-y_b)\bigr)^{-1}\end{matrix}\Biggm|\begin{matrix}1\leq r,s,a,b\leq n, a\neq b,\\ i_r\neq 1+i_{s},i_a\neq i_b \end{matrix}\Biggr\}.
$$

We define $$
\Sym(\bi):=\{\sigma\in\Sym_n|\sigma\bi=\bi\}, \quad\, G(\bi):=\prod_{\sigma\in\Sym(\bi)}\sigma(w^{-1}z)e(\bi) .
$$
We claim that $0\neq e(w\bi)\psi_we(\bi)G(\bi)\in ze(w\bi){\mathscr{R}}_{\beta}e(\bi)$.

First, by Lemma \ref{basis4}, it is clear that $e(w\bi)\psi_we(\bi)G(\bi)\neq 0$. Recall that $\psi_w:=\psi_{j_1}\dots\psi_{j_k}$, where $s_{j_1}\dots s_{j_k}$ is a prefixed reduced expression of $w$. If $i_{j_k}\neq i_{j_k}+1$, then by the defining relations for the quiver Hecke algebra ${\mathscr{R}}_\beta$, we can get that \begin{equation}\label{commutatorExchange2}
\psi_{j_k}e(\bi)G(\bi)=\psi_{j_k}\prod_{\sigma\in\Sym(\bi)}\sigma(w^{-1}z)e(\bi)=\Bigl(\prod_{\sigma\in\Sym(\bi)}(s_{j_k}\sigma)(w^{-1}z)e(s_{j_k}\bi)\Bigr)
\psi_{j_k}e(\bi) .
\end{equation}

If $i_{j_k}=i_{j_k}+1$, i.e., $s_{j_k}\bi=\bi$ and hence $s_{j_k}\in\Sym(\bi)$. Then we know that $G(\bi)$ is symmetric in $y_{j_k}, y_{j_k+1}$. It follows from the defining relations for the quiver Hecke algebra ${\mathscr{R}}_\beta$ again that $$
\psi_{j_k}e(\bi)G(\bi)=e(s_{j_k}\bi)G(\bi)e(s_{j_k}\bi)\psi_{j_k}e(\bi)=
e(s_{j_k}\bi)\Bigl(\prod_{\sigma\in\Sym(\bi)}\sigma(w^{-1}z)e(s_{j_k}\bi)\Bigr)\psi_{j_k}e(\bi) .
$$
Note that in this case $s_{j_k}\Sym(\bi)=\Sym(\bi)$, so we can write the above equality as $\psi_{j_k}e(\bi)G(\bi)=e(s_{j_k}\bi)(s_{j_k}G(\bi))e(s_{j_k}\bi)\psi_{j_k}e(\bi)$. In other words, the equality (\ref{commutatorExchange2}) still holds in this case.

By an easy induction on $\ell(w)$, we see that $$
e(w\bi)\psi_we(\bi)G(\bi)=e(w\bi)(wG(\bi))e(w\bi)\psi_we(\bi) .
$$
Note that $e(w\bi)(wG(\bi))=e(w\bi)\prod_{\sigma\in\Sym(\bi)}(w\sigma w^{-1})(z)e(w\bi)$ which has $ze(w\bi)$ as a left factor. This proves (\ref{323}), Hence we prove the first half of the theorem. The second half of the theorem can be proved in a similar way.
\end{proof}

\bigskip

\section{The isomorphisms between $\widetilde{\mathscr{R}}_\beta, \widetilde{\mathscr{R}}'_\beta$ and $\widetilde{\mathscr{H}}_\beta(q), \widetilde{H}_\beta$}

The purpose of this section is to construct the $K$-algebra isomorphisms: $\widetilde{\mathscr{R}}_\beta\cong \widetilde{\mathscr{H}}_\beta(q)$, $\widetilde{\mathscr{R}}'_\beta\cong\widetilde{H}_\beta$ (Theorem \ref{mainthm0a} and \ref{mainthm0b}), which will lift Brundan-Kleshchev's isomorphism.\smallskip

By abuse of notations, we write $e(\beta):=\sum_{\bi\in I^{\beta}}e(\bi)\in\mathscr{H}_\beta^{\Lambda}$. Recall that $e(\beta)\neq 0$ if and only if $\mathscr{H}_\beta^{\Lambda}\neq 0$ and if and only if $\R[\beta]\neq 0$. Assume that $e(\beta)\neq 0$. Let $p^\Lambda: \mathscr{R}_{\beta}\twoheadrightarrow\R[\beta]$ be the canonical surjective algebra homomorphism.

Let $\bi\in I^{\beta}$. It is easy to see that every element in $\hat{M}_n(\bi)$ (respectively, in $\hat{M}'_n(\bi)$) is sent to an invertible element in $\R[\beta]$ because $y_r, y_s$ are commuting nilpotent elements in $\R[\beta]$. It follows that the map ${p}^\Lambda$ naturally induces a surjective algebra homomorphism $p_1(\Lambda): \widetilde{\mathscr{R}}_{\beta}\twoheadrightarrow\R[\beta]$ and a surjective algebra homomorphism $p_2(\Lambda): \widetilde{\mathscr{R}}'_{\beta}\twoheadrightarrow\R[\beta]$.

\begin{thm} \label{mainthm0a} In the non-degenerate case, there is a $K$-algebra isomorphism $\theta: \widetilde{\mathscr{R}}_\beta\cong\widetilde{\mathscr{H}}_\beta(q)$, such that $e(\bi)\mapsto \he(\bi)$, $y_se(\bi)\mapsto \he(\bi)(1-q^{-i_s}\hX_s)\he(\bi)$ and $$
\psi_re(\bi)\mapsto\begin{cases}
q^{i_r}(\hT_r+1)(\hX_r-q\hX_{r+1})^{-1}\he(\bi), &\text{if $i_r=i_{r+1}$;}\\
q^{-i_{r}}\Bigl(\hT_r(\hX_r-\hX_{r+1})+(q-1)\hX_{r+1}\Bigr)\he(\bi), &\text{if $i_{r}=i_{r+1}+1$;}\\
\begin{matrix}\Bigl(\hT_r (\hX_{r+1}-\hX_{r})+(1-q)\hX_{r+1}\Bigr)\\
\times (\hX_r-q\hX_{r+1})^{-1}\he(\bi),\end{matrix} &\text{otherwise.}
\end{cases} ,
$$
for any $\bi\in I^{\beta}$, $1\leq s\leq n$ and $1\leq r<n$.

The inverse map $\eta$ is given by: $$
\eta(\he(\bi))=e(\bi),\quad \eta(\hX_s\he(\bi))=q^{i_s}(1-y_s)e(\bi),\quad
\eta(\hX_s^{-1}\he(\bi))=q^{-i_s}(1-y_s)^{-1}e(\bi), $$
and $\eta(\hT_r\he(\bi))$ is equal to $\psi_r(1-q+qy_{r+1}-y_r)e(\bi)-e(\bi)$ if  $i_r=i_{r+1}$; or $$\Bigl(q\psi_re(\bi)-(q-1)(1-y_{r+1})e(\bi)\Bigr) \Bigl(q(1-y_r)-(1-y_{r+1})\Bigr)^{-1}e(\bi),$$
if $i_{r}=i_{r+1}+1$; or otherwise $$\begin{aligned}
&\psi_r(q^{i_r}-q^{i_{r+1}+1}-q^{i_r}y_r+q^{i_{r+1}+1}y_{r+1})(q^{i_{r+1}}-q^{i_{r}}+q^{i_r}y_r-q^{i_{r+1}}y_{r+1})^{-1}e(\bi)\\
&\qquad -(1-q)q^{i_{r+1}}(1-y_{r+1})(q^{i_{r+1}}-q^{i_{r}}+q^{i_r}y_r-q^{i_{r+1}}y_{r+1})^{-1}e(\bi).
\end{aligned}
$$
\end{thm}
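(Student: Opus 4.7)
The plan is to lift Brundan--Kleshchev's isomorphism (Theorem \ref{BKiso}) from each cyclotomic quotient to the generalized Ore localizations, using the embeddings of both algebras into an inverse limit of cyclotomic algebras. The choice of polynomials $Q_r(\bi)$ made in \eqref{qri2} was engineered precisely so that, once $y_r e(\bi) = (1-q^{-i_r}L_r)e(\bi)$ is substituted into the expression $\psi_r e(\bi) \mapsto (T_r + P_r(\bi))Q_r(\bi)^{-1}e(\bi)$, the resulting formula recovers the one displayed in the statement; the model calculation after Lemma \ref{modifiedformV1} handles the case $i_r = i_{r+1}$, and the two other cases are analogous.

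First I would verify that the proposed images lie in $\widetilde{\mathscr{H}}_\beta(q)$. The only non-obvious input is that $(\hat X_r - q\hat X_{r+1})^{-1}\he(\bi)$ appearing in the first and third cases belongs to the list \eqref{unit}: in the first case $j_r - j_s = 0 \neq 1$ in $I$, while in the third $i_r \neq i_{r+1}+1$ supplies the required non-vanishing. Next, that $\theta$ defines an algebra homomorphism is checked by composing with each projection $\pi_\Lambda: \widetilde{\mathscr{H}}_\beta(q) \to \mathscr{H}_\beta^\Lambda(q)$: the composition sends every KLR generator to its image under Brundan--Kleshchev's $\theta_\Lambda$, so every relation of Definition \ref{D:QuiverRelations} holds after $\pi_\Lambda$, and therefore, by \eqref{z0Lam}, holds in $\widetilde{\mathscr{H}}_\beta(q)$ itself. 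The invertibility of each element of $\hat{\Sigma}_n(\bj)$ under $\theta$ is then direct: for instance
\[
\theta\bigl(((1-y_r)-q^b(1-y_s))e(\bj)\bigr) = q^{-j_r}(\hat X_r - q^{b+j_r-j_s}\hat X_s)\he(\bj)
\]
lies in \eqref{unit} precisely under the condition $b \neq 0$ built into $\hat M_n(\bj)$. The universal property of the generalized Ore localization (Lemma \ref{OreLocal}) then extends $\theta$ to $\widetilde{\mathscr{R}}_\beta$. The map $\eta$ is handled symmetrically: its image satisfies the relations of Lemma \ref{modifiedformV1} after projection to each $\mathscr{H}_\beta^\Lambda(q)$, and the factors $(q(1-y_r)-(1-y_{r+1}))^{-1}e(\bi)$ and its third-case analogue, being of the form \emph{unit plus nilpotent}, are invertible in $\widetilde{\mathscr{R}}_\beta$ by Definition--Theorem \ref{Localised2}.

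Finally, the identities $\eta \circ \theta = \mathrm{id}$ and $\theta \circ \eta = \mathrm{id}$ are checked on generators, once again by projecting to each cyclotomic quotient and appealing to Theorem \ref{BKiso} and \eqref{z0Lam}. The main obstacle is the case-by-case algebraic bookkeeping: the formula for $\theta(\psi_r e(\bi))$ splits into three shapes according to the relative values of $i_r$ and $i_{r+1}$, and in the case $i_r = i_{r+1}$ the quadratic relation $(\hat T_r - q)(\hat T_r + 1) = 0$ does not factor invertibly over $\widehat{\mathscr{H}}_\beta(q)$; here the analysis must instead invoke the refined identity \eqref{3.2b} together with Corollary \ref{oddcor} in order to invert $(\hat X_r - q\hat X_{r+1})\he(\bi)$ inside $\widetilde{\mathscr{H}}_\beta(q)$ and match the formula for $\eta(\hat T_r\he(\bi))$ back to $\psi_r e(\bi)$.
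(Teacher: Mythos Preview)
Your proposal is correct and follows essentially the same route as the paper: embed both $\widetilde{\mathscr{R}}_\beta$ and $\widetilde{\mathscr{H}}_\beta(q)$ into the respective inverse limits of cyclotomic quotients (Corollary~\ref{keycor1}), verify that the proposed formulas for $\theta$ and $\eta$ agree with Brundan--Kleshchev's $\theta_\Lambda$ after every projection $\pi_1(\Lambda)$, $p_1(\Lambda)$ (the paper carries out exactly your three-case computation for $\psi_r e(\bi)$), and then use injectivity of the vertical embeddings together with the fact that $\theta_1:=\varprojlim\theta_\Lambda$ is an isomorphism to conclude that $\theta,\eta$ are well-defined homomorphisms and mutually inverse.

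One minor correction to your last paragraph: the concern there is misplaced.  When $i_r=i_{r+1}$, the element $(\hat X_r-q\hat X_{r+1})\he(\bi)$ lies in $M_n(\bi)$ (take $b=1$; the condition $i_r\neq 1+i_{r+1}$ holds since $e\neq 1$), so it is invertible in $\widetilde{\mathscr{H}}_\beta(q)$ directly by Definition--Theorem~\ref{Localised1}.  Neither \eqref{3.2b} nor Corollary~\ref{oddcor} is needed for this; the case $i_r=i_{r+1}$ is in fact the easiest of the three, not an obstacle.
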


\begin{thm} \label{mainthm0b} In the degenerate case, there is a $K$-algebra isomorphism $\theta': \widetilde{\mathscr{R}}'_\beta\cong\widetilde{{H}}_\beta$, such that $e(\bi)\mapsto \he(\bi)$, $y_se(\bi)\mapsto \he(\bi)(\hx_s-i_s)\he(\bi)$ and $$
\psi_re(\bi)\mapsto\begin{cases}
(\hs_r+1)(1+\hx_{r+1}-\hx_r)^{-1}\he(\bi), &\text{if $i_r=i_{r+1}$;}\\
\Bigl(\hs_r(\hx_r-\hx_{r+1})+1\Bigr)\he(\bi), &\text{if $i_{r}=i_{r+1}+1$;}\\
\begin{matrix}\Bigl(\hs_r (\hx_r-\hx_{r+1})+1\Bigr)\\
\times (1+\hx_{r+1}-\hx_r)^{-1}\he(\bi),\end{matrix} &\text{otherwise.}
\end{cases} ,
$$
for any $\bi\in I^{\beta}$, $1\leq s\leq n$ and $1\leq r<n$.

The inverse map $\eta'$ is given by: $$
\eta'(\he(\bi))=e(\bi),\quad \eta'(\hx_s\he(\bi))=(y_s+i_s)e(\bi), $$
and $\eta'(\hs_r\he(\bi))$ is equal to $\psi_r(1+y_{r+1}-y_r)e(\bi)-e(\bi)$ if  $i_r=i_{r+1}$; or $$\Bigl(\psi_re(\bi)-e(\bi)\Bigr) \Bigl(1-y_{r+1}+y_{r})\Bigr)^{-1}e(\bi),$$
if $i_{r}=i_{r+1}+1$; or otherwise $$\begin{aligned}
&\psi_r\bigl(1-i_r+i_{r+1}+y_{r+1}-y_{r}\bigr)\bigl(i_r-i_{r+1}-y_{r+1}+y_{r}\bigr)^{-1}e(\bi)\\
&\qquad -(i_r-i_{r+1}-y_{r+1}+y_{r})^{-1}e(\bi).
\end{aligned}
$$
\end{thm}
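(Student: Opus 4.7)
\emph{Proof proposal.} The plan is to prove Theorem~\ref{mainthm0b} by paralleling the strategy for Theorem~\ref{mainthm0a}: lift Brundan--Kleshchev's isomorphisms $\theta_\Lambda\colon\R[\beta]\cong H_\beta^{\Lambda}$ of Theorem~\ref{BKiso} through inverse limits. The first observation is that with the specific choices~(\ref{Pri2}) and~(\ref{qri1}) of $P_r(\bi)$ and $Q_r(\bi)$, the maps $\theta_\Lambda$ depend only on the residue sequence $\bi$ (not on $\Lambda$), so they commute with the canonical projections $\pi_{\Lambda,\Lambda'}$ for $\Lambda>\Lambda'$ and yield a compatible family on the directed poset $(P^+,>)$, inducing an isomorphism at the inverse-limit level.

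Next I would check that the candidate formulas for $\theta'$ land in $\widetilde{H}_\beta$: the element $y_s e(\bi)$ maps to $(\hx_s-i_s)\he(\bi)\in\check{H}_\beta$, and the image of $\psi_r e(\bi)$ involves the inverse $(1+\hx_{r+1}-\hx_r)^{-1}\he(\bi)$, i.e.~the inverse of $(\hx_{r+1}-\hx_r-(-1))\he(\bi)\in M'_n(\bi)$ taken with $b=-1$. This is admissible in $\widetilde{H}_\beta$ precisely when the case hypothesis $i_r\neq i_{r+1}+1$ holds, which it does in both cases where the inverse actually appears. The KLR relations of Definition~\ref{D:QuiverRelations} can then be verified by pushing into each cyclotomic quotient via Lemma~\ref{zeiKeylem}, where they reduce to the already-established identities in $\R[\beta]\cong H_\beta^{\Lambda}$ under the Brundan--Kleshchev dictionary.

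In the opposite direction, the formulas for $\eta$ involve $(1-y_{r+1}+y_r)^{-1}e(\bi)$ (when $i_r=i_{r+1}+1$) and $(i_r-i_{r+1}-y_{r+1}+y_r)^{-1}e(\bi)$ (in the generic case); both are inverses of elements $(b+y_r-y_{r+1})e(\bi)\in\hat{M}'_n(\bi)$, corresponding to $b=1$ and $b=i_r-i_{r+1}\neq 0$ respectively, so they lie in $\widetilde{\mathscr{R}}'_\beta$ by Definition-Theorem~\ref{Localised2}. Well-definedness of $\eta$ as a ring homomorphism would be verified using the relations of Lemma~\ref{modifiedformV2}, and the identities $\theta'\circ\eta=\mathrm{id}$, $\eta\circ\theta'=\mathrm{id}$ on generators can then be checked by direct substitution, exploiting that $P_r(\bi)=Q_r(\bi)$ when $i_r=i_{r+1}+1$ and that $Q_r(\bi)=1+\hx_{r+1}-\hx_r$ after substituting $y_s=\hx_s-i_s$ when $i_r=i_{r+1}$.

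The main obstacle is the bookkeeping in the three-way case analysis $i_r=i_{r+1}$, $i_r=i_{r+1}+1$, and the generic case. I expect the hardest subcase to be $i_r=i_{r+1}$, since then $P_r(\bi)=1$ and the quadratic KLR relation $\psi_r^2 e(\bi)=0$ must be traced through the formula $(\hs_r+1)(1+\hx_{r+1}-\hx_r)^{-1}\he(\bi)$; this hinges on the identity $\he(\bi)\hs_r\he(\bi)\hs_r\he(\bi)=\he(\bi)$ supplied by Corollary~\ref{oddcor}, together with careful use of the first relation of~(\ref{sx3}) to commute the inverse past $\hs_r$. The cubic braid relations will likewise require patient computation with the truncated power series defining $P_r(\bi)$ and $Q_r(\bi)$ in the generic case.
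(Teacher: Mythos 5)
Your proposal takes essentially the same route as the paper: lift Brundan--Kleshchev's isomorphisms $\theta_\Lambda$ compatibly through the inverse limit using the $\Lambda$-independent choices~(\ref{Pri2}),~(\ref{qri1}), restrict to the subalgebras $\widetilde{\mathscr{R}}'_\beta$ and $\widetilde{H}_\beta$, and verify the explicit formulas by the three-way case analysis on $P_r(\bi)$, $Q_r(\bi)$. Your observations that $P_r(\bi)=Q_r(\bi)$ when $i_r=i_{r+1}+1$, that $Q_r(\bi)$ becomes $1+\hx_{r+1}-\hx_r$ under $y_s\mapsto\hx_s-i_s$ when $i_r=i_{r+1}$, and the admissibility analysis of the inverses in $M'_n(\bi)$ and $\hat{M}'_n(\bi)$ are all exactly what is needed.

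However, you overestimate the work remaining. The paper does not directly verify the quadratic relation $\psi_r^2e(\bi)=0$ or the braid relations in $\widetilde{H}_\beta$. Once one has established the injections $\widetilde{\mathscr{R}}'_\beta\hookrightarrow\underset{\Lambda}{\underleftarrow{\lim}}\,\R[\beta]$ and $\widetilde{H}_\beta\hookrightarrow\underset{\Lambda}{\underleftarrow{\lim}}\,H_\beta^\Lambda$ (Corollary~\ref{keycor1}) and the inverse-limit isomorphism $\theta_1'$, the only thing to check is that the candidate formula for $\theta'$ agrees with $\theta_1'$ on generators, i.e.\ that $\pi_2(\Lambda)(\theta'(\psi_re(\bi)))=\theta_\Lambda(p_2(\Lambda)(\psi_re(\bi)))$ for each $\Lambda$ — a direct substitution in each of the three cases. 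Well-definedness of $\theta'$ as a ring homomorphism then comes for free because $\theta'$ is a restriction of the isomorphism $\theta_1'$, so tracing $\psi_r^2e(\bi)=0$ through the formula via Corollary~\ref{oddcor} is superfluous, and the cubic braid identities never need to be touched. Relatedly, your appeal to ``the first relation of~(\ref{sx3})'' in the $i_r=i_{r+1}$ subcase is misplaced: that relation carries the hypothesis $i_r\neq i_{r+1}$ and does not apply there. Finally, be careful with the phrasing that $\theta_\Lambda$ ``depends only on $\bi$, not on $\Lambda$'' — the sums in~(\ref{Pri2}),~(\ref{qri1}) truncate differently in each $\R[\beta]$; the correct statement is that the same formal power series defines all $\theta_\Lambda$, so they commute with the projections $\pi_{\Lambda,\Lambda'}$. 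With these simplifications, your plan matches the paper's.
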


By a natural restriction of the canonical map $\pi_\Lam: \underset{\rm{M}}{\underleftarrow{\lim}}\,\H_{\beta}^{\rm{M}}(q)\rightarrow{\H}_{\beta}^{\Lambda}(q)$ to the subalgebra $\widehat{\H}_{\beta}(q)$, we get a surjective algebra homomorphism $\pi_1^\Lambda: \widehat{\H}_{\beta}(q)\twoheadrightarrow\mathscr{H}_{\beta}^{\Lambda}(q)$ such that for any $\bi\in I^\beta, 1\leq r<n, 1\leq k\leq n$, $$
\pi_1^\Lambda(\he(\bi))=e(\bi),\,\, \pi_1^\Lambda(\hT_r\he(\bi))=T_r e(\bi),\,\,\pi_1^\Lambda(\hX_k\he(\bi))=L_k e(\bi), $$ in the non-degenerate case. Similarly, we have a well-defined surjective algebra homomorphism $\pi_2^\Lambda:\widehat{H}_{\beta}\twoheadrightarrow{H}_{\beta}^{\Lambda}$ such that  for any $\bi\in I^\beta, 1\leq r<n, 1\leq k\leq n$, $$
\pi_2^\Lambda(\he(\bi))=e(\bi),\,\, \pi_2^\Lambda(\hs_r\he(\bi))=s_r e(\bi),\,\,\pi_2^\Lambda(\hx_k\he(\bi))=L_k e(\bi),$$
in the degenerate case.

By a natural restriction of the canonical map $\pi_\Lam: \underset{\rm{M}}{\underleftarrow{\lim}}\,\H_{\beta}^{\rm{M}}(q)\rightarrow{\H}_{\beta}^{\Lambda}(q)$ to the subalgebra $\widetilde{\H}_{\beta}(q)$, we get a surjective algebra homomorphism $\pi_1(\Lambda): \widetilde{\H}_{\beta}(q)\twoheadrightarrow\mathscr{H}_{\beta}^{\Lambda}(q)$ in the non-degenerate case. Similarly, we have a well-defined surjective algebra homomorphism $\pi_2(\Lambda):\widetilde{H}_{\beta}\twoheadrightarrow{H}_{\beta}^{\Lambda}$ in the degenerate case.

Recall the definition of $\widehat{\H}^+_{\beta}(q)$ in Remark \ref{modifiedplus}. By a natural restriction of the canonical map $\pi_\Lam: \underset{\rm{M}}{\underleftarrow{\lim}}\,\H_{\beta}^{\rm{M}}(q)\rightarrow{\H}_{\beta}^{\Lambda}(q)$ to the subalgebra $\widetilde{\H}^+_{\beta}(q)$, we get a surjective homomorphism $\pi_+(\Lambda): \widehat{\H}^+_{\beta}(q)\twoheadrightarrow\H_{\beta}^{\Lambda}$ from $\widehat{\H}^+_{\beta}(q)$ onto $\H_{\beta}^{\Lambda}$. This surjection coincides with the composition of the natural surjective homomorphism $\pi_1^\Lambda$ from $\widehat{\H}_{\beta}(q)$ onto $\H_{\beta}^{\Lambda}$ with the natural injection $\iota$ from $\widehat{\H}^+_{\beta}(q)$ into $\widehat{\H}_{\beta}(q)$.

\begin{cor} \label{smalllem} With the notations as above, we have that $$
\Ker\pi_+(\Lambda)=\Ker\pi_1^\Lambda\bigcap\widehat{\H}^+_{\beta}(q). $$
\end{cor}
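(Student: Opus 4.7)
The plan is straightforward: this corollary is a formal consequence of the factorization $\pi_+(\Lambda)=\sigma_{1,\Lambda}\circ\iota$ recorded in the paragraph just above its statement, together with the injectivity of $\iota:\widehat{\H}^+_\beta(q)\hookrightarrow\widehat{\H}_\beta(q)$. The only non-trivial content to verify is this injectivity, but it follows immediately from comparing the spanning set (\ref{333}) for $\widehat{\H}^+_\beta(q)$ with the $K$-basis of $\widehat{\H}_\beta(q)$ established in Lemma \ref{basis1}: the elements listed in (\ref{333}) appear verbatim as a subset of the basis (\ref{basis314}) (namely, those with $a_1,\dots,a_n\in\N$ rather than $\Z$), so they must be $K$-linearly independent inside $\widehat{\H}_\beta(q)$, forcing $\iota$ to be a genuine embedding.

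Given injectivity, I would then argue as follows. For the inclusion $\Ker\pi_+(\Lambda)\subseteq\Ker\sigma_{1,\Lambda}\cap\widehat{\H}^+_\beta(q)$: if $x\in\widehat{\H}^+_\beta(q)$ satisfies $\pi_+(\Lambda)(x)=0$, then by the factorization $0=\pi_+(\Lambda)(x)=\sigma_{1,\Lambda}(\iota(x))$, so $\iota(x)\in\Ker\sigma_{1,\Lambda}$, and identifying $x$ with $\iota(x)\in\widehat{\H}_\beta(q)$ via the embedding gives the containment. For the reverse inclusion: if $x\in\widehat{\H}^+_\beta(q)$ is such that $\iota(x)\in\Ker\sigma_{1,\Lambda}$, then $\pi_+(\Lambda)(x)=\sigma_{1,\Lambda}(\iota(x))=0$, hence $x\in\Ker\pi_+(\Lambda)$.

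There is no real obstacle. The entire content is set-theoretic once one has the factorization and the injectivity of $\iota$. The value of isolating this identification is presumably to facilitate later transfers of information between the ``plus'' version and the full modified affine Hecke algebra, by letting us test membership in $\Ker\pi_+(\Lambda)$ inside the larger algebra $\widehat{\H}_\beta(q)$ where Lemma \ref{basis1} and Lemma \ref{center0} are available.
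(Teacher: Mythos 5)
Your argument is exactly the one the paper has in mind: the corollary is asserted directly from the preceding paragraph's factorization $\pi_+(\Lambda)=\sigma_{1,\Lambda}\circ\iota$ with $\iota$ injective, and the conclusion is purely set-theoretic once $\widehat{\H}^+_\beta(q)$ is identified with its image in $\widehat{\H}_\beta(q)$. Your explicit check of injectivity of $\iota$ via the spanning set (\ref{333}) sitting inside the basis (\ref{basis314}) is a correct justification of what the paper takes for granted in Remark \ref{modifiedplus}.
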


Recall that the elements $y_1e(\beta),\dots,y_ne(\beta)\in\mathscr{R}_\beta$ generate a $K$-subalgebra which is isomorphic to the polynomial $K$-algebra $K[t_1,\dots,t_n]$.
Let $$
e_m(y_1,\dots,y_n):=\sum_{1\leq i_1<i_2<\dots<i_m\leq n}y_{i_1}\dots y_{i_m}\in K[y_1,\dots,y_n]^{\Sym_n}$$ be the $m$-th elementary symmetric polynomial. It is well-known that for each $1\leq k\leq n$, \begin{equation}\label{y1n}
y_k^n=\sum_{i=0}^{n-1}(-1)^{n+i-1}y_k^{i}e_{n-i}(y_1,\dots,y_n) .
\end{equation}
Let $\mathfrak{m}_n$ be the maximal ideal of $K[y_1,\dots,y_n]$ generated by $y_1,\dots,y_n$. Let
$\mathfrak{n}_n:=(\mathfrak{m}_n)^{\Sym_n}$. Applying (\ref{y1n}), we get that
\begin{equation}\label{kp1}\begin{matrix}
\text{for any $k\in\N$, there exists some $N(k)\in\N$, such that $y_1^{N(k)}$ lives inside the}\\
\text{two-sided ideal of $K[y_1,\dots,y_n]$ generated by $(\mathfrak{n}_n)^k$.}\end{matrix}
\end{equation}

\begin{lem} \label{keylem1} For each $\Lambda\in P^{+}$, let $I(\Lambda)$ be the two-sided ideal of $\mathscr{R}_\beta$ generated by $\{y_1^{\<\Lambda,\alpha_{i_1}^{\vee}\>}e(\bi)|\bi\in I^\beta\}$. Then $$
\bigcap_{\Lambda}I(\Lambda)=\{0\},
$$
where the subscript runs through all $\Lambda\in P^{+}$. \end{lem}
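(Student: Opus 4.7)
The plan is to exhibit, for each integer $k\ge 1$, a $\Lambda_k\in P^+$ such that $I(\Lambda_k)\subseteq(\mathfrak{n}_n)^k\mathscr{R}_\beta$, and then to conclude via the elementary fact that $\bigcap_k(\mathfrak{n}_n)^k\mathscr{R}_\beta=\{0\}$ using the freeness of $\mathscr{R}_\beta$ over $K[y_1,\cdots,y_n]$. The hint (\ref{kp1}) is tailor-made for the first reduction, and Lemma \ref{basis4} is what makes the second step work.

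The first step, and the essential ingredient, is to verify that the subalgebra $K[y_1,\cdots,y_n]^{\Sym_n}\subseteq\mathscr{R}_\beta$ of symmetric polynomials in the $y$'s is central. Using the defining relations $\psi_ry_{r+1}e(\bi)=(y_r\psi_r+\delta_{i_ri_{r+1}})e(\bi)$ and $y_{r+1}\psi_re(\bi)=(\psi_ry_r+\delta_{i_ri_{r+1}})e(\bi)$, a direct computation shows that the two elementary symmetric polynomials $y_r+y_{r+1}$ and $y_ry_{r+1}$ each commute with $\psi_r$ on every $e(\bi)$, since the $\delta$-correction terms cancel. Because $y_s$ commutes with $\psi_r$ for $s\ne r,r+1$, and any element of $K[y_1,\cdots,y_n]^{\Sym_n}$ can be expressed as a polynomial in $y_r+y_{r+1}$, $y_ry_{r+1}$, and the remaining $y_s$'s, it follows that all symmetric polynomials commute with every $\psi_r$, and hence are central in $\mathscr{R}_\beta$.

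Once centrality is in hand, invoke (\ref{kp1}) to choose, for each $k$, an integer $N(k)$ with $y_1^{N(k)}\in(\mathfrak{n}_n)^kK[y_1,\cdots,y_n]$, and then pick $\Lambda_k\in P^+$ so that $(\Lambda_k,\alpha_j^\vee)\ge N(k)$ for every $j\in I$ arising as the first coordinate of some $\bi\in I^\beta$. Then each generator $y_1^{(\Lambda_k,\alpha_{i_1})}e(\bi)$ of $I(\Lambda_k)$ lies in $(\mathfrak{n}_n)^k\mathscr{R}_\beta$, and since centrality ensures this is a two-sided ideal, $I(\Lambda_k)\subseteq(\mathfrak{n}_n)^k\mathscr{R}_\beta$. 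This yields $\bigcap_\Lambda I(\Lambda)\subseteq(\mathfrak{n}_n)^k\mathscr{R}_\beta$ for every $k$. By Lemma \ref{basis4}, every element of $\mathscr{R}_\beta$ has a unique expression $\sum_{w,\bi}(\psi_we(\bi))\cdot f_{w,\bi}(y)$, making $\mathscr{R}_\beta$ a free right $K[y_1,\cdots,y_n]$-module with basis $\{\psi_we(\bi)\mid w\in\Sym_n,\bi\in I^\beta\}$; centrality then gives $(\mathfrak{n}_n)^k\mathscr{R}_\beta=\bigoplus_{w,\bi}(\psi_we(\bi))\cdot(\mathfrak{n}_n)^kK[y_1,\cdots,y_n]$. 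Since $\mathfrak{n}_n\subseteq\mathfrak{m}_n$, we obtain $\bigcap_k(\mathfrak{n}_n)^kK[y_1,\cdots,y_n]\subseteq\bigcap_k\mathfrak{m}_n^k=\{0\}$ (a polynomial vanishing to all orders at the origin is zero), and combining with the direct-sum decomposition finishes the proof.

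The main obstacle is the centrality assertion in the first step. Although it is well known, verifying it rigorously requires careful bookkeeping of the $\delta_{i_ri_{r+1}}e(\bi)$ correction terms that appear when commuting $y$'s past $\psi$'s, and checking cancellation for each of the elementary symmetric polynomials $y_r+y_{r+1}$ and $y_ry_{r+1}$ before concluding via the standard structure theorem for invariants. Once centrality is secured, the remaining steps amount to a short Krull-style intersection argument powered by (\ref{kp1}) and the freeness provided by Lemma \ref{basis4}.
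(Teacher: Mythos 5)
Your argument is correct and uses the same essential ingredients as the paper's proof (the bound \eqref{kp1}, centrality of $\mathfrak{n}_n e(\beta)$, and the PBW basis of Lemma~\ref{basis4}); the only difference is organizational, phrasing the conclusion as $\bigcap_k(\mathfrak{n}_n)^k\mathscr{R}_\beta=\{0\}$ rather than deriving a contradiction from a degree bound on the coefficients in the $\psi_w$-expansion, and you helpfully spell out the centrality verification that the paper only cites as a known fact.
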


\begin{proof} Suppose that $\bigcap_{\Lambda}I(\Lambda)\neq 0$. Let $0\neq z\in\bigcap_{\Lambda}I(\Lambda)$. Then there exists an integer $k\in\Z^{>0}$, such that for any $\bj\in I^\beta$, we can write \begin{equation}\label{NNN}
ze(\bj)=\sum_{i=1}^{s}\psi_{w_i}f_ie(\bj),
\end{equation}
where $w_1,\dots,w_s\in\Sym_n$ are pairwise distinct, and $f_i\in K[y_1,\dots,y_n]$ such that $\deg(f_i)<k$ for any $1\leq i\leq s$.

Now we pick an integer $N:=N(k)$ as in (\ref{kp1}). We take a special $\Lambda\in P^{+}$ such that $\<\Lambda,\alpha_{j_1}^\vee\>=N$ for any $\bj\in I^{\beta}$. By assumption, $z\in I(\Lambda)$, which implies that $ze(\beta)$ lives inside the two-sided ideal of $\mathscr{R}_\beta$ generated by $y_1^{N}e(\beta)$. Hence by (\ref{kp1}) $ze(\beta)$ lives inside the two-sided ideal of $\mathscr{R}_\beta$ generated by $(\mathfrak{n}_n)^ke(\beta)$. However, this is a contradiction to
(\ref{NNN}) by Lemma \ref{basis4} and the fact that $\mathfrak{n}_ne(\beta)$ is central in ${\mathscr{R}}_\beta$.
\end{proof}

The following corollary says that all these modified affine Hecke algebras, quiver Hecke algebras as well as their generalized Ore localizations are embedded in the the inverse limits of cyclotomic Hecke algebras or cyclotomic quiver Hecke algebras.

\begin{cor} \label{keycor1} We have the following natural injections: $$
\mathscr{R}_{\beta}\hookrightarrow\widetilde{\mathscr{R}}_{\beta}\hookrightarrow\underset{\Lambda}{\underleftarrow{\lim}}\,\R[\beta],\,\,
\widehat{\H}_{\beta}(q)\hookrightarrow\widetilde{\H}_{\beta}(q)\hookrightarrow\underset{\Lambda}{\underleftarrow{\lim}}\,\H_{\beta}^{\Lambda}(q),\,\,
\widehat{H}_{\beta}\hookrightarrow\widetilde{H}_{\beta}\hookrightarrow\underset{\Lambda}{\underleftarrow{\lim}}\,H_{\beta}^{\Lambda}.$$
\end{cor}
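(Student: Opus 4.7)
The plan is to treat the three chains in two groups: the two Hecke algebra chains, where the inclusions are essentially built into the definitions, and the quiver Hecke algebra chain, where Lemma~\ref{keylem1} supplies the actual substance of the argument.

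For the chains $\widehat{\H}_\beta(q)\hookrightarrow\widetilde{\H}_\beta(q)\hookrightarrow\underset{\Lambda}{\underleftarrow{\lim}}\,\H_\beta^\Lambda(q)$ and $\widehat{H}_\beta\hookrightarrow\widetilde{H}_\beta\hookrightarrow\underset{\Lambda}{\underleftarrow{\lim}}\,H_\beta^\Lambda$, I would simply read off the inclusions from the construction. Indeed, $\widehat{\H}_\beta(q)$ was defined in Definition~\ref{MAHA} as a subalgebra of $\underset{\Lambda}{\underleftarrow{\lim}}\,\H_n^\Lambda(q)$, and Definition/Theorem~\ref{Localised1} realises $\widetilde{\H}_\beta(q)$ as the subalgebra of the same inverse limit generated by $\widehat{\H}_\beta(q)$ together with the elements of~(\ref{unit}); in particular the first arrow of the chain is immediate. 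Both algebras have $\he(\beta)$ as their identity, and this element projects under each map to $\H_n^\Lambda(q)$ onto the block idempotent $e(\beta)$, so both embed into $\he(\beta)\cdot\underset{\Lambda}{\underleftarrow{\lim}}\,\H_n^\Lambda(q)\cdot\he(\beta)$, which is canonically identified with $\underset{\Lambda}{\underleftarrow{\lim}}\,\H_\beta^\Lambda(q)$. The degenerate case proceeds in exactly the same way.

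For the chain $\mathscr{R}_\beta\hookrightarrow\widetilde{\mathscr{R}}_\beta\hookrightarrow\underset{\Lambda}{\underleftarrow{\lim}}\,\R[\beta]$, I would deduce both arrows from Lemma~\ref{keylem1}. The surjections $p(\Lambda):\mathscr{R}_\beta\twoheadrightarrow\R[\beta]$ are compatible with the canonical projections $\R[\widetilde{\Lambda}]\twoheadrightarrow\R[\Lambda]$ for $\widetilde{\Lambda}>\Lambda$ and hence assemble into a homomorphism $\check{p}:\mathscr{R}_\beta\to\underset{\Lambda}{\underleftarrow{\lim}}\,\R[\beta]$ whose kernel is exactly $\bigcap_{\Lambda}I(\Lambda)$, which equals $\{0\}$ by Lemma~\ref{keylem1}. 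Since $p(\Lambda)$ factors through $p_1(\Lambda):\widetilde{\mathscr{R}}_\beta\twoheadrightarrow\R[\beta]$, the map $\check{p}$ factors as $\mathscr{R}_\beta\to\widetilde{\mathscr{R}}_\beta\to\underset{\Lambda}{\underleftarrow{\lim}}\,\R[\beta]$, and the injectivity of $\check{p}$ automatically forces the first arrow $\mathscr{R}_\beta\to\widetilde{\mathscr{R}}_\beta$ to be injective.

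For the second arrow $\widetilde{\mathscr{R}}_\beta\hookrightarrow\underset{\Lambda}{\underleftarrow{\lim}}\,\R[\beta]$ I would appeal to the normal form coming from the generalized Ore construction of Definition/Theorem~\ref{Localised2}: any $z\in\widetilde{\mathscr{R}}_\beta$ admits a finite presentation $z=\sum_{\bi\in I^\beta}a_\bi\,s_\bi^{-1}$ with $a_\bi\in\mathscr{R}_\beta\,e(\bi)$ and $s_\bi\in\hat{\Sigma}_n(\bi)$. Each $s_\bi$ becomes invertible after applying $p(\Lambda)$ (this is precisely the reason the surjection $p_1(\Lambda)$ descends from $p(\Lambda)$), so the assumption $p_1(\Lambda)(z)=0$ for every $\Lambda$ forces $p(\Lambda)(a_\bi)=0$ for every $\Lambda$ and every $\bi$. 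Then each $a_\bi$ lies in $\bigcap_{\Lambda}I(\Lambda)=\{0\}$ by Lemma~\ref{keylem1} and hence vanishes in $\mathscr{R}_\beta$ by the injectivity just proved, giving $z=0$. The only delicate point, which I expect to be the main (though mild) obstacle, is confirming that an arbitrary element of $\widetilde{\mathscr{R}}_\beta$ really does admit such a common-denominator presentation with numerators genuinely inside $\mathscr{R}_\beta$; this is where one invokes the two-sided Ore conditions already verified in Definition/Theorem~\ref{Localised2}.
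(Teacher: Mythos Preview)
Your proposal is correct and follows essentially the same approach as the paper: the Hecke algebra chains are immediate from the definitions (Definitions~\ref{MAHA} and~\ref{Localised1}), while the quiver Hecke chain relies on Lemma~\ref{keylem1}. The paper's own proof is a terse two-line version of what you wrote.

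One small simplification: your ``delicate point'' about the common-denominator presentation for the second arrow $\widetilde{\mathscr{R}}_\beta\hookrightarrow\underset{\Lambda}{\underleftarrow{\lim}}\,\R[\beta]$ is already packaged into the last sentence of Lemma~\ref{OreLocal} (``Moreover, if $\psi$ is injective then $\sigma$ is injective too''). Once you know $\check{p}:\mathscr{R}_\beta\to\underset{\Lambda}{\underleftarrow{\lim}}\,\R[\beta]$ is injective (from Lemma~\ref{keylem1}) and that the elements of each $\hat{\Sigma}_n(\bi)$ become invertible in the target, the universal property of the generalized Ore localization immediately gives both the existence and the injectivity of the induced map out of $\widetilde{\mathscr{R}}_\beta$, with no need to manipulate normal forms by hand.
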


\begin{proof} The first injection follows from Lemma \ref{keylem1}, while the other two injections follows directly from their definitions.
\end{proof}

\medskip
\noindent{\bf Proof of Theorem \ref{mainthm0a} and \ref{mainthm0b}:} By our choices of $Q_r(\bi)$ in (\ref{qri1}) and (\ref{qri2}), it is easy to see that Brundan--Kleshchev's isomorphisms $\theta^{\Lambda}$ induces an isomorphism: $$
\theta_0: \underset{\Lambda}{\underleftarrow{\lim}}\,\R[\beta]\cong \underset{\Lambda}{\underleftarrow{\lim}}\,\H_{\beta}^{\Lambda}(q),$$
in the non-degenerate case. We have the following diagrams: $$\begin{tikzpicture}
     \matrix[matrix of math nodes,row sep=1cm,column sep=16mm]{
       |(O)| \widetilde{\mathscr{R}}_\beta & |(S)| \widetilde{\H}_\beta(q)\\
       |(P)| \underset{\Lambda}{\underleftarrow{\lim}}\,\R[\beta]   &|(H)|\underset{\Lambda}{\underleftarrow{\lim}}\,\H_{\beta}^{\Lambda}(q) \\
     };
      \draw[>->](S) -- node[right]{} (H);
     \draw[>->](O) -- node[left]{} (P);
     \draw[->](P) -- node[above]{$\sim$} node[below]{$\theta_{0}$} (H);
     \draw[->](O) -- node[above]{$\theta$} node[below]{} (S);
    \end{tikzpicture},
\begin{tikzpicture}
     \matrix[matrix of math nodes,row sep=1cm,column sep=16mm]{
       |(O)| \widetilde{\H}_\beta(q) & |(S)| \widetilde{\mathscr{R}}_\beta\\
       |(P)| \underset{\Lambda}{\underleftarrow{\lim}}\,\H_{\beta}^{\Lambda}(q)   &|(H)|\underset{\Lambda}{\underleftarrow{\lim}}\,\R[\beta] \\
     };
      \draw[>->](S) -- node[right]{} (H);
     \draw[>->](O) -- node[left]{} (P);
     \draw[->](P) -- node[above]{$\sim$} node[below]{$\theta_{0}^{-1}$} (H);
     \draw[->](O) -- node[above]{$\eta$} node[below]{} (S);
    \end{tikzpicture}
$$
where the vertical maps are the injections given in Corollary \ref{keycor1}, and for the moment both $\theta$ and $\eta$ are only defined on a set of $K$-algebra generators. Note that the bottom maps are both $K$-algebra isomorphisms. In order to show that
$\theta$ and $\eta$ can be extended to a pair of well-defined $K$-algebra homomorphisms, it is enough to check that the above diagrams commutes on a set of $K$-algebra generators of $\widetilde{\mathscr{R}}_\beta$ and $\widetilde{\H}_\beta(q)$ respectively.

To show the first diagram commutes on a set of $K$-algebra generators of $\widetilde{\mathscr{R}}_\beta$, it is suffices to show that $$\begin{aligned}
\pi_1(\Lambda)\Bigl(\theta\bigl(\psi_re(\bi)\bigr)\Bigr)&=\theta^{\Lambda}\Bigl(p_1(\Lambda)\bigl(\psi_re(\bi)\bigr)\Bigr),\\
\pi_1(\Lambda)\Bigl(\theta\bigl(y_se(\bi)\bigr)\Bigr)&=\theta^{\Lambda}\Bigl(p_1(\Lambda)\bigl(y_se(\bi)\bigr)\Bigr),\\
\pi_1(\Lambda)\Bigl(\theta\bigl(e(\bi)\bigr)\Bigr)&=\theta^{\Lambda}\Bigl(p_1(\Lambda)\bigl(e(\bi)\bigr)\Bigr),\\
\end{aligned}
$$
where $\bi\in I^\beta, 1\leq r<n, 1\leq s\leq n$. The last two equalities are obviously true. It remains to verify the first equality. There are three cases:

\smallskip
{\noindent \it Case 1.} $i_r=i_{r+1}$. In this case, $$\begin{aligned}
&\quad\,\pi_1(\Lambda)\Bigl(\theta\bigl(\psi_re(\bi)\bigr)\Bigr)=q^{i_r}(T_r+1)(L_r-qL_{r+1})^{-1}e(\bi)\\
&=(T_r+1)(q^{-i_r}L_r-q^{1-i_r}L_{r+1})^{-1}e(\bi)\\
&=(T_r+1)(1-y_r-q+qy_{r+1})^{-1}e(\bi)\\
&=(T_r+P_r(\bi))Q_r(\bi)^{-1}e(\bi)=\theta^{\Lambda}\Bigl(p_1(\Lambda)\bigl(\psi_re(\bi)\bigr)\Bigr),
\end{aligned}
$$
as required.

\smallskip
{\noindent \it Case 2.} $i_r=i_{r+1}+1$. In this case, $$\begin{aligned}
&\quad\,\pi_1(\Lambda)\Bigl(\theta\bigl(\psi_re(\bi)\bigr)\Bigr)\\
&=q^{-i_r}\Bigl(T_r(L_r-L_{r+1})+(q-1)L_{r+1}\Bigr)e(\bi)\\
&=\Bigl(T_r(1-q^{-1}-y_r+q^{-1}y_{r+1})+(1-q^{-1})(1-y_{r+1})\Bigr)e(\bi) .
\end{aligned}
$$
By definition, in the non-degenerate case, $$\begin{aligned}
P_r(\bi)&=1+\frac{y_r-y_{r+1}}{1-q^{-1}}\frac{1}{1-\frac{y_{r+1}-qy_r}{1-q}}=1+\frac{q(y_{r+1}-y_r)}{1-q-y_{r+1}+qy_r},\\
Q_r(\bi)&=\frac{1}{1-q^{-1}}\frac{1}{1-\frac{y_{r+1}-qy_r}{1-q}}=\frac{-q}{1-q+qy_r-y_{r+1}} .
\end{aligned}$$
Therefore, $$\begin{aligned}
&\quad\,\theta^{\Lambda}\Bigl(p_1(\Lambda)\bigl(\psi_re(\bi)\bigr)\Bigr)=\bigl(T_r+P_r(\bi)\bigr)Q_r(\bi)^{-1}e(\bi)\\
&=\Bigl(T_r(1-q^{-1}-y_r+q^{-1}y_{r+1})+(1-q^{-1})(1-y_{r+1})\Bigr)e(\bi)\\
&=\pi_1(\Lambda)\Bigl(\theta\bigl(\psi_re(\bi)\bigr)\Bigr).
\end{aligned}$$

\smallskip
{\noindent \it Case 3.} $i_r\notin\{i_{r+1},i_{r+1}+1\}$. In the non-degenerate case, we have that $$\begin{aligned}
P_r(\bi)&=\frac{1-q}{1-q^{i_r-i_{r+1}}}\Bigl\{1+\frac{y_r-y_{r+1}}{1-q^{i_{r+1}-i_r}}\frac{1}{1-\frac{q^{i_{r+1}}y_{r+1}-q^{i_r}y_r}{q^{i_{r+1}}-q^{i_r}}}\Bigr\}\\
&=\frac{1-q}{1-q^{i_r-i_{r+1}}}\Bigl\{1+\frac{q^{i_r}(y_{r+1}-y_r)}{q^{i_{r+1}}-q^{i_r}-q^{i_{r+1}}y_{r+1}+q^{i_r}y_r}\Bigr\},\\
&=\frac{(1-q)q^{i_{r+1}}(1-y_{r+1})}{q^{i_{r+1}}-q^{i_r}-q^{i_{r+1}}y_{r+1}+q^{i_r}y_r},\\
Q_r(\bi)^{-1}&=(P_r(\bi)-1)^{-1}\\
&=\Biggl(\frac{(1-q)q^{i_{r+1}}(1-y_{r+1})}{q^{i_{r+1}}-q^{i_r}-q^{i_{r+1}}y_{r+1}+q^{i_r}y_r}-1\Biggr)^{-1}\\
&=\frac{q^{i_{r+1}}-q^{i_r}-q^{i_{r+1}}y_{r+1}+q^{i_r}y_r}{q^{i_r}-q^{i_r}y_r-q^{i_{r+1}+1}+q^{i_{r+1}+1}y_{r+1}} .
\end{aligned}$$
By definition, $$\begin{aligned}
&\quad\,\pi_1(\Lambda)\Bigl(\theta\bigl(\psi_re(\bi)\bigr)\Bigr)=\Bigl(T_r(L_{r+1}-L_r)+(1-q)L_{r+1}\Bigr)(L_r-qL_{r+1})^{-1}e(\bi)\\
&=(T_r+P_r(\bi))Q_r(\bi)^{-1}e(\bi)\\
&=\theta^{\Lambda}\Bigl(p_1(\Lambda)\bigl(\psi_re(\bi)\bigr)\Bigr) .
\end{aligned}
$$
This proves the claim for the first diagram. In a similar way, we can prove that the second diagram commutes on a set of $K$-algebra generators of $\widetilde{\H}_\beta(q)$. Therefore, $\theta$ and $\eta$ can be extended to a pair of well-defined $K$-algebra homomorphisms.

Finally, they are mutually inverse maps because it is easy to check that $\theta\eta$ and $\eta\theta$ are both equal to the identity map on a set of generators. This completes the proof of Theorem \ref{mainthm0a}, while Theorem \ref{mainthm0b} can be proved in a similar way.
\bigskip

\section{Some applications}

The purpose of this section is to give some applications of Theorem \ref{mainthm0a} and \ref{mainthm0b}.\medskip


Let $\HH_n\in\{\HH_n(q), H_n\}$. Let $\Lam\in P^+$ and $\HH_n^\Lam\in\{\HH_n^\Lam(q), H_n^\Lam\}$. Recall that $\pi^\Lam: \HH_n\twoheadrightarrow\HH_n^\Lam$ is the canonical surjective homomorphism. Note that the Jucys-Murphy operators $L_1,\dots,L_n\in\HH_n^\Lam$ are in general algebraically dependent.

\begin{dfn} An element $z\in\HH_n^\Lam$ is said to be a symmetric polynomial in $L_1,\dots,L_n$ if $z=f(L_1,\dots,L_n)$ for some symmetric polynomial
$f(t_1,\dots,t_n)\in K[t_1,\dots,t_n]$.
\end{dfn}

Since $\pi^\Lam$ maps any central element of $\HH_n$ to a central element of $\HH_n^\Lam$, it is clear that any symmetric polynomial in $L_1,\dots,L_n$ lives in the center of $\HH_n^\lam$. The following center conjecture is well-known.

\begin{conj} \label{conj1} (cf. \cite[3.1]{Mac}) Let $n\in\N$ and $\Lam\in P^+$. Then $\pi^\Lam$ maps the center $Z(\HH_n)$ of $\HH_n$ surjectively onto the center $Z(\HH_n^\Lam)$ of $\H_n^\Lam$. Moreover, the center $Z(\HH_n^\Lam)$ of $\H_n^\Lam$ is the set of symmetric polynomials in $L_1,\dots,L_n$.
\end{conj}

Some special cases of Conjecture \ref{conj1} are known to be true. \begin{enumerate}
\item If $q=1$, then Conjecture \ref{conj1} was proved by Murphy \cite{Mur} in the case when $\ell=1$ and by Brundan \cite{Brundan:degenCentre} for general $\ell$;
\item If $q\neq 1$ and $\ell=1$, then  Conjecture \ref{conj1} was proved by Dipper and James \cite{DJ} in the semisimple case and by Francis and Graham \cite{FG} in general case.
\item If $q\neq 1$, $\ell>1$ and $e=0$, then Conjecture \ref{conj1} was proved by McGerty \cite{Mc}.
\end{enumerate}

Recall that $p^\Lambda: \mathscr{R}_{\beta}\twoheadrightarrow\R[\beta]$ be the canonical surjective algebra homomorphism. Inside the quiver Hecke algebra $\mathscr{R}_\beta$, $e(\bi)\neq 0$ for each $\bi\in I^\beta$; and $e(\bi)=e(\bj)$ if and only if $\bi=\bj$. As a result, we have a natural left action of $\Sym_n$ on the set $\{e(\bi)\in\mathscr{R}_\beta|\bi\in I^\beta\}$ via $w\cdot e(\bi):=e(w\bi)$. On the other hand, we also have a natural left action of $\Sym_n$ on the subalgebra $K[y_1,\dots,y_n]$ generated by $y_1,\dots,y_n$ defined by $$
w(\sum_{i}\lam_iy_1^{c_{1i}}\dots y_n^{c_{ni}}):=\sum_{i}\lam_iy_{w(1)}^{c_{1i}}\dots y_{w(n)}^{c_{ni}}, \,\,\text{where}\,\,w\in\Sym_n, \lam_i\in K, c_{1i},\dots,c_{ni}\in\N .
$$
Therefore, we get a natural action of $\Sym_n$ on the subalgebra of $\mathscr{R}_\beta$ generated by $\{y_1,\dots,y_n,e(\bi)|\bi\in I^\beta\}$. We set \begin{equation}\label{invariant1}
K[y_1,\dots,y_n,e(\bi)|\bi\in I^\beta]^{\Sym_n}:=\bigl\{f\in K[y_1,\dots,y_n,e(\bi)|\bi\in I^\beta]\bigm|\sigma(f)=f,\,\,\forall\,\sigma\in\Sym_n\bigr\}.
\end{equation}
However, in contrast to the quiver Hecke algebra $\mathscr{R}_\beta$ case, inside the cyclotomic quiver Hecke algebra $\R[\beta]$, it is possible that $e(\bi)=0$ for some $\bi\in I^\beta$. Therefore, it is not clear at all whether one can define a natural action of $\Sym_n$ on the set $\{e(\bi)\in\R[\beta]|\bi\in I^\beta\}$ via $w\cdot e(\bi):=e(w\bi)$ or not, because it could happen that $e(\bi)=e(\bj)=0$ while $e(w\bi)\neq e(w\bj)$.

\begin{dfn} \label{SymmetricElement} An element $f\in K[y_1,\dots,y_n,e(\bi)|\bi\in I^\beta]\subseteq\mathscr{R}_\beta$ is said to be symmetric if $f\in K[y_1,\dots,y_n,e(\bi)|\bi\in I^\beta]^{\Sym_n}$. An element $z\in K[y_1,\dots,y_n,e(\bi)|\bi\in I^\beta]\subseteq\R[\beta]$ is said to be symmetric if $z=p^\Lambda(f)$ for some symmetric element $f$ in $\mathscr{R}_\beta$.
\end{dfn}

By \cite[Theorem 2.9]{KhovLaud:diagI}, the center $Z(\mathscr{R}_\beta)$ of $\mathscr{R}_\beta$ is the set of symmetric elements  in $K[y_k,e(\bi)|1\leq k\leq n, \bi\in I^\beta]\subseteq\R[\beta]$. The following is a similar center conjecture for the cyclotomic quiver Hecke algebras $\R[\beta]$ of type $A$, which has been a folklore for some time. In fact, it is even expected to be true for the cyclotomic quiver Hecke algebras associated to any simply laced quiver.

\begin{conj} \label{RbetaLambdaConj1} Then $p^{\Lambda}$ maps the center $Z(\mathscr{R}_\beta)$ of $\mathscr{R}_\beta$ surjectively onto the center $Z(\R[\beta])$ of $\R[\beta]$. In other words, the center $Z(\R[\beta])$ of $\R[\beta]$ is the set of symmetric elements in $K[y_1,\dots,y_n,e(\bi)|\bi\in I^\beta]\subseteq\R[\beta]$.
\end{conj}

The following theorem is the first application of the main result of this paper.

\begin{thm} \label{mainapp1} Let $K$ be any field and $\Lam\in P^+$. Then Conjecture (\ref{conj1}) holds if and only if Conjecture \ref{RbetaLambdaConj1} holds for any $\beta\in Q_n^{+}$.
\end{thm}

Now we are going to use the isomorphism $\theta, \theta'$ obtained in Theorems \ref{mainthm0a}, \ref{mainthm0b} to give a proof of Theorem \ref{mainapp1}. First, we need the following lemma, which express each idempotent $e(\bi)\in\HH_n^\Lam$ as a polynomial (depending only on $\bi=(i_1,\dots,i_n)$) in the Jucys--Murphy operators $L_1,\cdots,L_n$. Recall from (\ref{LrXrN}) and  (\ref{EnLam}) that $$
E_{N}(\bi):=\prod_{r=1}^{n}\prod_{i_r\neq j\in I(\beta)}X_{i_r,j,N}, $$
where $\beta=\sum_{i\in I}k_i\alpha_i$, $I(\beta)=\{j\in I|k_i\neq 0\}$, and
$$X_{i_r,j,N}:=\begin{cases}1-\Bigl(\frac{q^{i_r}-X_r}{q^{i_r}-q^j}\Bigr)^N, &\text{if $q\neq 1$;}\\
1-\Bigl(\frac{i_r-x_r}{i_r-j}\Bigr)^N, &\text{if $q=1$.}\end{cases} $$
By Lemma \ref{keyGeLilemma}, $\pi^\Lambda(E_{N}(\bi))=e(\bi)$.

\begin{lem} \label{eipolynomial} For each $\bi\in I^\beta$, we can associate with a polynomial $f_{\bi}(t_1,\dots,t_n)\in K[t_1,\dots,t_n]$ which depends only on $\bi$, such that \begin{enumerate}
\item $e(\bi)=f_{\bi}(L_1,\dots,L_n)$ holds in $\HH_n^\Lam$; and
\item $f_{s_r\bi}(t_1,\dots,t_n)=s_r(f_{\bi}(t_1,\dots,t_n))$ for any $1\leq r<n$.
\end{enumerate}
In particular, the idempotent $e(\beta)$ in $\HH_n^\Lam$ is a symmetric polynomial in $L_1,\dots,L_n$.
\end{lem}

\begin{proof} We fix an integer $N>\ell^n n!$. Recall that the elements $X_1,\dots, X_n$ (respectively, $x_1,\dots,x_n$) are algebraically independent in $\HH_n(q)$ (respectively, in $H_n$). By Lemma \ref{keyGeLilemma}, we see that if we define $$
f_\bi(t_1,\dots,t_n):=E_{N}(\bi)\downarrow_{X_1:=t_1,\dots,X_n:=t_n}, $$
when $q\neq 1$; or $$
f_\bi(t_1,\dots,t_n):=E_{N}(\bi)\downarrow_{x_1:=t_1,\dots,x_n:=t_n},
$$
when $q=1$; then by the definition of $E_{N}(\bi)$ we can deduce that for any $1\leq r<n$, $f_{s_r\bi}(t_1,\dots,t_n)=s_r(f_{\bi}(t_1,\dots,t_n))$.

As a consequence, since $e(\beta)=\sum_{\bj\in I^\beta}e(\bj)$, it is clear that $e(\beta)\in\HH_n^\Lam$ is a symmetric polynomial in $L_1,\dots,L_n$. This proves the lemma.
\end{proof}

We have the following surjective algebra homomorphisms: $$\begin{aligned}
& \pi_1(\Lambda): \widetilde{\H}_{\beta}(q)\twoheadrightarrow\H_{\beta}^{\Lambda}(q),\quad\,\,\, \pi_2(\Lambda): \widetilde{H}_{\beta}\twoheadrightarrow H_{\beta}^{\Lambda},\\
& p_1(\Lambda): \widetilde{\mathscr{R}}_{\beta}\twoheadrightarrow\R[\beta],\quad\,\,\, p_2(\Lambda): \widetilde{\mathscr{R}}'_{\beta}\twoheadrightarrow \R[\beta] .
\end{aligned}
$$
\bigskip\medskip

\noindent
{\textbf{Proof of the Theorem \ref{mainapp1}}}: We only prove Theorem \ref{mainapp1} in the non-degenerate case (i.e., $q\neq 1$), as the degenerate case (i.e., $q=1$) is the same.

Suppose that Conjecture \ref{RbetaLambdaConj1} holds for any $\beta\in Q_n^{+}$. We define $$
Q(\Lam):=\{\beta\in Q_n^+|\text{$e(\beta)\neq 0$ in $\HH_n^\Lam$}\}, $$ which is a finite subset of $Q_n^+$. We have the following commutative diagram: $$\begin{tikzpicture}
     \matrix[matrix of math nodes,row sep=1cm,column sep=16mm]{
       |(O)| \oplus_{\beta\in Q(\Lam)}\widetilde{\mathscr{R}}_\beta & |(S)| \oplus_{\beta\in Q(\Lam)}\widetilde{\HH}_\beta(q)\\
       |(P)| \mathscr{R}_n^{\Lambda}=\oplus_{\beta\in Q(\Lam)}\R[\beta]   &|(H)|{\HH}_n^{\Lambda}(q)=\oplus_{\beta\in Q(\Lam)}\HH_\beta^\Lam(q) \\
     };
      \draw[->>](S) -- node[right]{$\pi_1(\Lam)$} (H);
     \draw[->>](O) -- node[left]{$p_1(\Lam)$} (P);
     \draw[->](P) -- node[above]{$\sim$} node[below]{$\theta^{\Lambda}$} (H);
     \draw[->](O) -- node[above]{$\theta$} node[below]{$\sim$} (S);
    \end{tikzpicture},
$$
where the two vertical maps are both surjective.

By assumption, the map $p^\Lambda$ sends the center $Z(\mathscr{R}_{\beta})$ of $\mathscr{R}_\beta$ surjectively onto the center $Z(\R[\beta])$ of $\mathscr{R}_\beta^\Lambda$ for each $\beta$. Since $\mathscr{R}_\beta\subseteq\widetilde{\mathscr{R}}_\beta$ and $Z(\mathscr{R}_\beta)\subseteq Z(\widetilde{\mathscr{R}}_\beta)$, we can deduce that $\theta\Bigl(Z(\mathscr{R}_\beta)\Bigr)$ was mapped by the right vertical map in the above commutative diagram surjectively onto the center $Z({\HH}_{\beta}^\Lam(q))$ of ${\HH}_{\beta}^\Lam(q)$. On the other hand, by the definition of $\theta$ in Theorem \ref{mainthm0a} and the explicit description of the center $Z(\mathscr{R}_\beta)$ in \cite[Theorem 2.9]{KhovLaud:diagI}, we know that $$\begin{aligned}
\theta\Bigl(Z(\mathscr{R}_\beta)\Bigr)&\subseteq Z(\widetilde{\HH}_\beta(q))\cap K[\hX_k\he(\bi),\he(\bi)|1\leq k\leq n, \bi\in I^\beta]\\
&\subseteq Z(\widehat{\HH}^+_\beta(q))\cap K[\hX_k\he(\bi),\he(\bi)|1\leq k\leq n, \bi\in I^\beta].
\end{aligned}
$$
It follows that the subspace $Z(\widehat{\HH}^+_\beta(q))\cap K[\hX_k\he(\bi),\he(\bi)|1\leq k\leq n, \bi\in I^\beta]$ of $\widehat{\HH}^+_\beta(q)$ was mapped surjectively onto the center $Z(\HH_\beta^\Lam)(q)$ of $\HH_\beta^\Lam(q)$. On the other hand, by Corollary \ref{center00a}, we know that
$Z(\widehat{\HH}^+_\beta(q))\cap K[\hX_k\he(\bi),\he(\bi)|1\leq k\leq n, \bi\in I^\beta]$ is equal to the set of symmetric elements in $\{\hX_k\he(\bi),\he(\bi)|1\leq k\leq n, \bi\in I^\beta\}$. It follows that the center $Z(\HH_\beta^\Lam(q))$ of $\HH_\beta^\Lam(q)$ is the set of symmetric elements in $L_1e(\bi),\dots,L_ne(\bi),e(\bi),\bi\in I^\beta$. Now combining this with Lemma \ref{eipolynomial} and noting that
$Z(\HH_n^\Lam(q))=\oplus_{\beta\in Q(\Lam)}Z({\HH}_{\beta}^\Lam(q))$, we can deduce that $Z(\HH_n^\Lam(q))$ is the set of symmetric polynomials in $L_1,\dots,L_n$, as required.

Conversely, suppose that Conjecture (\ref{conj1}) holds. That says,  $Z(\HH_n^\Lam(q))$ is the set of symmetric polynomials in $L_1,\dots,L_n$. Let $\beta\in Q_n^+$. Then $Z(\HH_\beta^\Lam(q))$ is the set of symmetric polynomials in $L_1e(\beta),\dots,L_ne(\beta)$. It follows that
$Z(\HH_\beta^\Lam(q))\cap K[\hX_k\he(\bi),\he(\bi)|1\leq k\leq n, \bi\in I^\beta]$ was mapped surjectively onto the center $Z(\HH_\beta^\Lam(q))$ of $\HH_\beta^\Lam(q)$. By the above commutative diagram, we can deduce that the preimage $$
\theta^{-1}\Bigl(Z(\HH_\beta^\Lam(q))\cap K[\hX_k\he(\bi),\he(\bi)|1\leq k\leq n, \bi\in I^\beta]\Bigr)
$$
was mapped surjectively by $p_1(\Lam)$ onto the center $Z(\R[\beta])$. On the other hand, by the definition of $\theta$ in Theorem \ref{mainthm0a} we know that
$$
\theta^{-1}\Bigl(Z(\HH_\beta^\Lam(q))\cap K[\hX_k\he(\bi),\he(\bi)|1\leq k\leq n, \bi\in I^\beta]\Bigr)\subseteq Z(\widetilde{\mathscr{R}}_\beta)\cap{\mathscr{R}}_\beta=Z({\mathscr{R}}_\beta) .
$$
It follows that $p_1(\Lam)$ must map the center $Z({\mathscr{R}}_\beta)$ of ${\mathscr{R}}_\beta$ surjectively onto the center $Z(\R[\beta])$ of $\R[\beta]$ as required. This completes the proof of the theorem.
\bigskip

In the remaining part of this section, we are going to give two more applications of Theorem \ref{mainthm0a} and \ref{mainthm0b}. For simplicity, we assume henceforth that $K$ is an algebraically closed field. For any $K$-algebra $A$, we use $A\mod$ to denote the category of finite dimensional left $A$-modules. Recall that $I=\Z/e\Z$. Let $\H_{n}(q)\mod_I$ be the full subcategory of ${\H}_{n}(q)\mod$ such that all the eigenvalues of $X_1$ are in $q^{I}$, $\widehat{\H}_{\beta}(q)\mod_I$ (resp., $\widetilde{\H}_{\beta}(q)\mod_I$) be the full subcategory of $\widehat{\H}_{\beta}(q)\mod$ (resp., $\widetilde{\H}_{\beta}(q)\mod$) such that all the eigenvalues of $\hX_1\he(\beta)$ are in $q^{I}$. Let $H_{n}\mod_I$ be the full subcategory of ${H}_{n}\mod$ such that all the eigenvalues of $x_1$ are in $I$, and we define $ \widetilde{H}_{\beta}\mod_I$ and  $\widehat{H}_{\beta}\mod_I$ in a similar way. Then we have the following natural inclusions: $$\begin{aligned}
&\H_{\beta}^{\Lambda}(q)\mod\subseteq \widehat{\H}_{\beta}(q)\mod_I=\widetilde{\H}_{\beta}(q)\mod_I,\quad H_{\beta}^{\Lambda}\mod\subseteq\widehat{H}_{\beta}\mod_I=\widetilde{H}_{\beta}\mod_I,\\
&\R[\beta]\mod\subseteq \widetilde{\mathscr{R}}_{\beta}\mod,\quad\,\,\R[\beta]\mod\subseteq \widetilde{\mathscr{R}}'_{\beta}\mod .
\end{aligned}
$$

For any $\bi=(i_1,\dots,i_n), \bj=(j_1,\dots,j_n)\in I^n$, we define $\bi\sim\bj$ whenever there exists some $w\in\Sym_n$ such that $w\bi=\bj$. The central characters of $\H_n(q)$ are in bijection with the elements in the set $I^n/\!\!\sim$ of $\Sym_n$-orbits, and hence are in bijection with the elements $\beta\in Q_n^+$. For any $\beta\in Q_n^+$, let $(\H_n(q)\mod)[\beta]$ be the subcategory of $\H_n\mod$ which is determined by the central character $\chi_\beta$ of $\H_n(q)$ corresponding to $\beta$ (cf. \cite{K}). In a similar way, we have the subcategory $(H_n\mod)[\beta]$ of $H_n\mod$.

\begin{lem} We have that $$\begin{aligned}
&\H_n(q)\mod_I=\oplus_{\beta\in Q_n^+}({\H}_{n}(q)\mod)[\beta],\quad H_n\mod_I=\oplus_{\beta\in Q_n^+}({H}_{n}\mod)[\beta],\\
&\widetilde{\H}_{\beta}(q)\mod_I=\widehat{\H}_{\beta}(q)\mod_I=({\H}_{n}(q)\mod)[\beta],\quad \widetilde{\mathscr{R}}_{\beta}\mod=\mathscr{R}_{\beta}\mod,\\
&\widetilde{H}_{\beta}\mod_I=\widehat{H}_{\beta}\mod_I=({H}_{n}\mod)[\beta],\quad\widetilde{\mathscr{R}}'_{\beta}\mod=\mathscr{R}_{\beta}\mod .
\end{aligned}
$$
\end{lem}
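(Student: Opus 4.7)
The lemma contains five equalities, which group into three themes: (i) the block decomposition $\H_n(q)\mod_I=\oplus_{\beta\in Q_n^+}(\H_n(q)\mod)[\beta]$ and its degenerate twin; (ii) the identifications $\widetilde{\H}_\beta(q)\mod_I=\widehat{\H}_\beta(q)\mod_I=(\H_n(q)\mod)[\beta]$ together with their degenerate analogues; and (iii) the Ore-localisation equalities $\widetilde{\mathscr{R}}_\beta\mod=\mathscr{R}_\beta\mod$ and $\widetilde{\mathscr{R}}'_\beta\mod=\mathscr{R}_\beta\mod$. My plan is to treat these in that order.

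For theme (i), I will invoke Lemma \ref{center1}: the centre of $\H_n(q)$ is the ring of symmetric Laurent polynomials in $X_1,\dots,X_n$, so finite-dimensional central characters are in bijection with $\Sym_n$-orbits in $(K^\times)^n$. For $M\in\H_n(q)\mod_I$ I will decompose $M$ according to its generalised central-character eigenspaces; on any indecomposable summand the joint generalised eigenvalues of $(X_1,\dots,X_n)$ form a single $\Sym_n$-orbit, and the hypothesis that the $X_1$-eigenvalues lie in $q^I$ forces the whole orbit into $q^{I^n}$, hence it corresponds to a unique $\beta\in Q_n^+$. The degenerate case is parallel, with $X_k$ replaced by $x_k$ and $q^I\subset K^\times$ replaced by $I\cdot 1_K\subset K$.

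For theme (ii), I will prove the two equalities separately. The first, $\widetilde{\H}_\beta(q)\mod_I=\widehat{\H}_\beta(q)\mod_I$, holds because on $M\in\widehat{\H}_\beta(q)\mod_I$ decomposed as $\oplus_{\bi\in I^\beta}\he(\bi)M$, the operator $\hX_k$ has generalised eigenvalue $q^{j_k}$ on $\he(\bj)M$; so each inverted element $(\hX_r-q^b\hX_s)\he(\bj)$ with $j_r\neq b+j_s$ has nonzero leading scalar $q^{j_r}-q^{b+j_s}$ plus a nilpotent correction, and is therefore invertible. The universal property of the generalised Ore localisation (Theorem \ref{Localised1}) then extends the action uniquely to $\widetilde{\H}_\beta(q)$. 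The second equality $\widehat{\H}_\beta(q)\mod_I=(\H_n(q)\mod)[\beta]$ uses, in one direction, the natural algebra homomorphism $\H_n(q)\to\widehat{\H}_\beta(q)$ sending $T_k\mapsto\check{\pi}(T_k)\he(\beta)$ and $X_k^{\pm 1}\mapsto\check{\pi}(X_k^{\pm 1})\he(\beta)$ (unital when $\he(\beta)$ is viewed as the identity of the target); this turns any $M\in\widehat{\H}_\beta(q)\mod_I$ into an $\H_n(q)$-module on which the symmetric functions of $X_k$ act through $q^{I^\beta}$, giving central character $\chi_\beta$. Conversely, given finite-dimensional $M\in(\H_n(q)\mod)[\beta]$, the minimal polynomial of $X_1$ on $M$ is a product of factors $(X_1-q^{a_i})^{N_i}$ with $a_i\in I$; for $\Lambda\in P^+$ of sufficiently high level $M$ factors through $\H_n^\Lambda(q)$ and then through its block $\H_\beta^\Lambda(q)$, which is a quotient of $\widehat{\H}_\beta(q)$ via $\pi_{1,\Lambda}$. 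The degenerate analogue is handled identically with $\hX,\hT$ replaced by $\hx,\hs$.

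For theme (iii), the nontrivial direction is extending any finite-dimensional $\mathscr{R}_\beta$-module $M$ uniquely to $\widetilde{\mathscr{R}}_\beta$ (resp.\ $\widetilde{\mathscr{R}}'_\beta$). The key input is that $y_1,\dots,y_n$ act nilpotently on $M$: since $\mathscr{R}_\beta$ is $\Z$-graded with $\deg y_k=2>0$, every finite-dimensional simple module over the algebraically closed $K$ admits a graded lift on which positive-degree elements are nilpotent, and nilpotency propagates along Loewy filtrations to all finite-dimensional modules. Granted nilpotency, the elements of $\hat{M}_n(\bj)$, namely $(1-y_s)e(\bj)$ and $\bigl((1-y_r)-q^b(1-y_s)\bigr)e(\bj)$ with $0\neq b\in I$, act as their nonzero constant terms ($1$ and $1-q^b$ respectively, noting $q^b\neq 1$ since $b\not\equiv 0\pmod{e}$) modulo nilpotents, and are hence invertible; a parallel computation with $(b+y_r-y_s)e(\bj)$ (invertible because $b\cdot 1_K\neq 0$) handles the degenerate case. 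Theorem \ref{Localised2} then supplies the unique extension. The main obstacle is the nilpotency claim for $y_k$ on arbitrary finite-dimensional $\mathscr{R}_\beta$-modules; once that is in place, the remaining assertions follow essentially formally from the inverse-limit construction of the modified algebras together with the universal property of the generalised Ore localisation.
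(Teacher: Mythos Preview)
The paper states this lemma without proof, treating it as routine once the definitions are in place and the block decomposition of $\H_n(q)\mod_I$ by central characters is recalled (in the paragraph immediately preceding the lemma, with a reference to \cite{K2010}). Your plan is therefore more detailed than anything the paper supplies, and it is essentially correct.

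Two remarks. First, the logical order in your theme~(ii) should be reversed: you need the factorisation of any $M\in\widehat{\H}_\beta(q)\mod_I$ through some $\H_\beta^\Lambda(q)$ (your ``converse'' argument via the minimal polynomial of $\hX_1$) \emph{before} you can assert that $\hX_k$ has generalised eigenvalue $q^{j_k}$ on $\he(\bj)M$. That eigenvalue claim is not built into the abstract inverse-limit definition of $\he(\bj)$; it only holds once $M$ is known to see a specific cyclotomic quotient in which $e(\bj)$ is the genuine weight-space projector. Second, for theme~(iii) you rightly isolate nilpotency of $y_k$ on finite-dimensional $\mathscr{R}_\beta$-modules as the crux. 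Your gradability-of-simples route is valid, but gradability of simples over an infinite-dimensional graded algebra is not automatic and itself requires input (for instance the explicit graded construction of simples in \cite{LV}, or first passing to a cyclotomic quotient). The paper's own handling of the analogous point, in the proof of the \emph{next} lemma, is in fact sketchier than yours: it asserts that $y_1^N e(\beta)$ vanishes on $V$ ``because $y_1$ is a homogeneous element of degree~$2$'', which on its own does not justify nilpotency on ungraded modules.
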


\begin{lem} \label{equalCate} We have that $$\begin{aligned}
&\widetilde{\H}_{\beta}(q)\mod_I=\underset{\Lambda}{\underrightarrow{\lim}}\,\Bigl(\H_{\beta}^{\Lambda}(q)\mod\Bigr),\quad
\widetilde{H}_{\beta}\mod_I=\underset{\Lambda}{\underrightarrow{\lim}}\,\Bigl(H_{\beta}^{\Lambda}\mod\Bigr),\\
&{\mathscr{R}}_{\beta}\mod=\underset{\Lambda}{\underrightarrow{\lim}}\,\Bigl(\R[\beta]\mod\Bigr) .
\end{aligned}$$
\end{lem}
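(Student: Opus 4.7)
The plan is to prove each of the three equalities as a pair of inclusions, treating the common framework together. The inclusion ``$\supseteq$'' is immediate: any module $M$ over a cyclotomic quotient becomes, via the surjections $\pi_1(\Lambda), \pi_2(\Lambda), p_1(\Lambda)$ constructed earlier, a module over the corresponding limit algebra. For the Hecke versions one further checks the ``mod $I$'' condition using the cyclotomic relation $(X_1-q^{\kappa_1})\cdots(X_1-q^{\kappa_\ell})=0$ (respectively its degenerate analogue), which forces the eigenvalues of $\hX_1\he(\beta)$ (resp.\ $\hx_1\he(\beta)$) to lie in $q^I$ (resp.\ $I$).

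For the reverse inclusion ``$\subseteq$'', I would begin with the KLR case. Let $M\in\mathscr{R}_\beta\mod$ be finite-dimensional and fix $\bi\in I^\beta$. The key step is to show that $y_1$ acts nilpotently on the finite-dimensional space $e(\bi)M$: once this is established, a single $N$ works uniformly for all $\bi\in I^\beta$, and choosing $\Lambda\in P^+$ with $\langle\Lambda,\alpha_{i_1}^\vee\rangle\geq N$ for every $\bi$ yields that $M$ factors through $p_1(\Lambda)$. For the nilpotency, I would use that the elementary symmetric polynomials $e_k(y_1,\ldots,y_n)e(\beta)$ are central in $\mathscr{R}_\beta$ (they commute with every $\psi_r$ by the braid/quadratic relations and with all $y_j$'s), together with the polynomial identity $y_r^n = e_1(y)y_r^{n-1}-\cdots \pm e_n(y)$, and the standard fact in KLR theory that positive-degree central elements act as zero on every irreducible finite-dimensional $\mathscr{R}_\beta$-module. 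Hence the central $e_k(y)e(\beta)$ act nilpotently on the finite-dimensional $M$, and by the above identity $y_r$ itself then acts nilpotently on $M$.

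For the Hecke cases (equalities 1 and 2), I would transfer through the isomorphisms $\theta$ and $\theta'$ of Theorems \ref{mainthm0a} and \ref{mainthm0b}. A finite-dimensional $\widetilde{\H}_\beta(q)$-module $M$ with eigenvalues of $\hX_1\he(\beta)$ in $q^I$ becomes, via $\eta=\theta^{-1}$, a finite-dimensional $\widetilde{\mathscr{R}}_\beta$-module and hence (by the preceding lemma $\widetilde{\mathscr{R}}_\beta\mod=\mathscr{R}_\beta\mod$) a $\mathscr{R}_\beta$-module. The KLR case then yields a cyclotomic factorization which, unwound through $\theta$, factors the original $\widetilde{\H}_\beta(q)$-action through some $\H_\beta^\Lambda(q)$ via $\pi_1(\Lambda)$; the degenerate statement follows identically from $\theta'$ and $\pi_2(\Lambda)$.

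The main obstacle is to confirm that the ``mod $I$'' condition on $\hX_1\he(\beta)$ really does translate into nilpotency of $y_1 e(\bi)$ for every $\bi$ (so that the KLR step can be applied); equivalently, one must show that on $\he(\bi)M$ each $\hX_r$ has unique generalized eigenvalue $q^{i_r}$, and not merely that $\hX_1$ has eigenvalues in $q^I$. This is ensured by the construction of $\he(\bi)$ as the image in the inverse limit of the Brundan--Kleshchev projector $E_{n,\Lambda}(\bi)$ onto the generalized $(q^{i_1},\ldots,q^{i_n})$-eigenspace of $(L_1,\ldots,L_n)$ in every cyclotomic quotient, which forces the desired eigenvalue behavior on any finite-dimensional representation of the limit algebra.
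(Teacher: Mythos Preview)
Your proposal is correct, but the route you take for the Hecke equalities is considerably more roundabout than the paper's.

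For the KLR equality your argument and the paper's are essentially the same idea: $y_1$ has positive degree, so it acts nilpotently on any finite-dimensional module, and one then chooses $\Lambda$ with $\langle\Lambda,\alpha_{i_1}^\vee\rangle$ large enough. You spell this out via the centrality of the elementary symmetric polynomials and the identity $y_r^n=\sum(-1)^{n+i-1}y_r^i e_{n-i}(y)$, whereas the paper simply invokes the grading; both are pointing at the same mechanism.

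For the Hecke equalities the paper's argument is much more direct and does not use the isomorphisms $\theta,\theta'$ at all. Given $V\in\widetilde{\H}_\beta(q)\mod_I$ finite-dimensional, the single operator $\hX_1\he(\beta)$ acts on $V$ with all eigenvalues in $q^I$ (this is exactly the ``$\mod_I$'' hypothesis), so since $K$ is algebraically closed its characteristic polynomial already has the form $\prod_{j=1}^\ell(t-q^{\kappa_j})$ with $\kappa_j\in I$. Setting $\Lambda=\sum_j\Lambda_{\kappa_j}$ immediately gives $V\in\H_\beta^\Lambda(q)\mod$. No transfer through $\theta$, no analysis of the other $\hX_r$'s, and no appeal to the projector description of $\he(\bi)$ is needed. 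Your detour through $\widetilde{\mathscr R}_\beta$ works, but it imports the main theorems of the paper to prove a lemma that is logically prior to (and independent of) them.

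Incidentally, the ``main obstacle'' you flag is not actually an obstacle in your own argument: once $M$ is viewed as a finite-dimensional $\mathscr R_\beta$-module via $\theta$, the KLR step gives nilpotency of $y_1$ on all of $M$ directly, so you never need to separately verify the eigenvalue behaviour of each $\hX_r$ on $\he(\bi)M$.
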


\begin{proof} We only prove the first and the third equalities as the second one can be proved in a similar way. For any  $V\in\widetilde{\H}_{\beta}(q)\mod_I$, we can find $\ell\in\N, \kappa_1,\dots,\kappa_{\ell}\in\Z/e\Z$, such that $$
(\hX_1\he(\beta)-q^{\kappa_1})\dots(\hX_1\he(\beta)-q^{\kappa_{\ell}})(v)=0,\quad\forall\,v\in V,
$$
because $K$ is algebraically closed. Set $\Lambda:=\sum_{i=1}^{\ell}\Lambda_{\kappa_i}$. Note that there is surjective homomorphism $\pi_1(\Lam):\widetilde{\H}_{\beta}(q)\twoheadrightarrow\HH_\beta^\Lam(q)$ such that $$
\pi_1(\Lam)(\he(\bi))=e(\bi),\,\,\pi_1(\Lam)(\hX_k\he(\bi))=L_ke(\bi),\,\,\pi_1(\Lam)(\hT_r\he(\bi))=T_re(\bi),
$$
for any $1\leq k\leq n, 1\leq r<n, \bi\in I^\beta$. It follows that $V\in\H_{\beta}^{\Lambda}(q)\mod$ as required.

For any finite dimensional module $V$ over ${\mathscr{R}}_{\beta}$, since $\deg y_1=2>0$ and $\dim V<\infty$, we can find $N\in\N$, such that $$
y_1^{N}e(\beta)(v)=0,\quad\forall\,v\in V,
$$
because $y_1$ is a homogeneous element of degree $2$. We can take a special $\Lambda\in P^{+}$ such that $\<\Lambda,\alpha_{i_1}^{\vee}\>=N$ for any $\bi\in I^{\beta}$. Then $V\in\R[\beta]\mod$ as required.
\end{proof}

Therefore, we can use Lemma \ref{equalCate} to identify these categories. Let $m,n\in\N$. If we shift the subscripts of each generator of $\H_n(q)$ upward by $m$ positions, then we get an algebra $\H_n^{(m)}(q)$ which is isomorphic to $\H_n(q)$ and with standard generators $T_{m+1},\dots,T_{m+n-1},X_{m+1}^{\pm 1},\dots,X_{m+n}^{\pm 1}$. For each $g\in\H_n(q)$, let ${g}^{(m)}$ be its canonical image in $\H_n^{(m)}(q)$. For any $\alpha\in Q_m^{+}, \beta\in Q_n^{+}$ and $\bi=(i_1,\dots,i_m)\in I^{\alpha}, \bj=(j_1,\dots,j_n)\in I^{\beta}$, we define the concatenation $\bi\vee\bj:=(i_1,\dots,i_m,j_1,\dots,j_n)\in I^{\alpha+\beta}$. Then the map $$
fe(\bi)\otimes ge(\bj)\mapsto fg^{(m)}e(\bi\vee\bj),\quad\forall\,f\in\H_m(q), g\in\H_n(q)
$$ can be naturally extended to a well-defined injective non-unital algebra homomorphism $\widehat{\H}_{\alpha}(q)\boxtimes \widehat{\H}_{\beta}(q)\hookrightarrow
\widehat{\H}_{\alpha+\beta}(q)$. By definition, this injection also induces a natural injection $$
\iota_{\alpha,\beta}: \widetilde{\H}_{\alpha}(q)\boxtimes\widetilde{\H}_{\beta}(q)\hookrightarrow \widetilde{\H}_{\alpha+\beta}(q) .
$$
In a similar way, the well-known non-unital injection ${\mathscr{R}}_{\alpha}\boxtimes {\mathscr{R}}_{\beta}\hookrightarrow
{\mathscr{R}}_{\alpha+\beta}$ naturally induces an injection $$
\widetilde{\mathscr{R}}_{\alpha}\boxtimes \widetilde{\mathscr{R}}_{\beta}\hookrightarrow
\widetilde{\mathscr{R}}_{\alpha+\beta} .
$$
which will still be denoted by $\iota_{\alpha,\beta}$. We have the following commutative diagram of morphisms:
$$\begin{tikzpicture}
     \matrix[matrix of math nodes,row sep=1cm,column sep=16mm]{
       |(O)| \widetilde{\H}_{\alpha}(q)\boxtimes \widetilde{\H}_{\beta}(q) & |(S)| \widetilde{\H}_{\alpha+\beta}(q)\\
       |(P)| \widetilde{\mathscr{R}}_{\alpha}\boxtimes \widetilde{\mathscr{R}}_{\beta}   &|(H)|\widetilde{\mathscr{R}}_{\alpha+\beta} \\
     };
      \draw[->](S) -- node[right]{$\wr$} (H);
     \draw[->](O) -- node[left]{$\wr$} (P);
     \draw[->](P) -- node[above]{} node[below]{$\iota_{\alpha,\beta}$} (H);
     \draw[->](O) -- node[above]{$\iota_{\alpha,\beta}$} node[below]{} (S);
    \end{tikzpicture},
$$
where vertical maps are isomorphisms induced from $\theta$.

For any $V\in\widetilde{\H}_{\alpha}(q)\mod$, let $V^{\theta}\in\widetilde{\mathscr{R}}_{\alpha}\mod$ such that $V^{\theta}=V$ as a $K$-linear space and $\widetilde{\mathscr{R}}_{\alpha}$ acts on $V^\theta$ through the isomorphism $\theta$. For any $V\in\widetilde{\H}_{\alpha}(q)\mod, W\in\widetilde{\H}_{\beta}(q)\mod$, we have the following convolution products: $$\begin{aligned}
V\circ W&:=\ind_{\alpha,\beta}^{\alpha+\beta}V\boxtimes W=\widetilde{\H}_{\alpha+\beta}(q)\otimes_{\widetilde{\H}_{\alpha}(q)\boxtimes \widetilde{\H}_{\beta}(q)}\bigl(V\otimes W\bigr)\in\widetilde{\H}_{\alpha+\beta}(q)\mod ,\\
V^\theta\circ W^\theta &:=\ind_{\alpha,\beta}^{\alpha+\beta}V^\theta\boxtimes W^\theta=\widetilde{\mathscr{R}}_{\alpha+\beta}\otimes_{\widetilde{\mathscr{R}}_{\alpha}\boxtimes \widetilde{\mathscr{R}}_{\beta}}\bigl(V^\theta\otimes W^\theta\bigr)\in\widetilde{\mathscr{R}}_{\alpha+\beta}\mod ,\\
\end{aligned}$$
Then the commutative diagram in the previous paragraph implies that \begin{equation}\label{convol}
\Bigl(V\circ W\Bigr)^{\theta}\cong V^{\theta}\circ W^{\theta} .
\end{equation}
Similar statements apply to the categories $\widetilde{H}_{\alpha}\mod$, $\widetilde{\mathscr{R}}'_{\alpha}\mod$. With these results in mind, one can translate verbatim most of the results  in the representation theory of $\H_n$ (say, in \cite{G}, \cite{Varz}) into the results in the representation theory of $\mathscr{R}_n$ (say, in \cite{LV}) and vice versa.

Let $\H_n\in\{\H_n(q), H_n\}$. For any $(a_1,\dots,a_n)\in I^n$, following \cite{G}, \cite{K} and \cite{LV}, we define
$L(a_1,\dots,a_n):=\widetilde{f}_{a_n}\dots\widetilde{f}_{a_1}1$, where $1$ denotes the trivial irreducible module over $\H_0\cong K$, and $\widetilde{f}_k$ is defined as in \cite{G}. Then $L(a_1,\dots,a_n)$ is an irreducible module over $\H_n$. Two irreducible $\H_n$-modules $L(a_1,\dots,a_n), L(b_1,\dots,b_n)$ lie in the same block if and only if $(a_1,\dots,a_n)\sim (b_1,\dots,b_n)$, i.e., they differ by a permutation.
Note that in general a given irreducible module $L$ may be parameterized by several different tuples $(a_1,\dots,a_n)$.
By a similar procedure \cite{LV}, one can define the irreducible module $\widetilde{L}(a_1,\dots,a_n):=\widetilde{f}_{a_n}\dots\widetilde{f}_{a_1}1$ for the quiver Hecke algebra $\mathscr{R}_\beta$ for each $n$-tuple $(a_1,\dots,a_n)\in I^\beta$, where $\beta\in Q_n^+$.

\begin{dfn} \label{keyfn} Let $\alpha=\sum_{i\in I}l_i\alpha_i,\beta=\sum_{i\in I}k_i\alpha_i\in Q_n^{+}$. We say that $\alpha, \beta$ are {\bf weakly separated} if for any $1\leq i,j\leq n$ with $j-i\in\{1,-1\}$, either $l_i=0$ or $k_i=0$.
We say that $\alpha, \beta$ are {\bf separated} if for any $1\leq i,j\leq n$ with $j-i\in\{0,1,-1\}$, either $l_i=0$ or $k_i=0$.
\end{dfn}

The following result was mentioned in \cite[6.1.3]{K} as a remark in the degenerate setting. The full details of the proof are included in \cite{HZh}, where it gives an alternative approach to the famous Dipper--Mathas' Morita equivalence \cite{DM} for cyclotomic Hecke algebras.

\begin{lem} \label{morita} (cf. \cite[6.1.3]{K}, \cite{HZh}) Let $k\in\N$ and $n_1,\dots,n_k\in\N$ such that $\sum_{i=1}^kn_i=n$. Let $\beta_i\in Q_{n_i}^{+}$ for each $1\leq i\leq k$. Set $\beta:=\sum_{i=1}^{k}\beta_i$. Suppose that $\beta_1,\dots,\beta_k$ are pairwise separated, then there is an equivalence of categories: $$
(\H_n\mod)[\beta]\sim \bigl(\H_{n_1}\boxtimes\dots\boxtimes\H_{n_k}\bigr)\mod\,[\beta_1,\dots,\beta_k] .
$$
\end{lem}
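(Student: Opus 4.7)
The plan is to transport the equivalence to the KLR side via Theorem~\ref{mainthm0a} (resp.~\ref{mainthm0b}) and realise it there as a Morita-type idempotent truncation, with the separation hypothesis supplying both the corner-algebra identification and the fullness of the idempotent.

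First I would use the canonical identifications $(\H_n(q)\mod)[\beta]=\widetilde{\H}_\beta(q)\mod_I\simeq\mathscr{R}_\beta\mod$ and, for the tensor product side, the corresponding identification of $(\H_{n_1}(q)\boxtimes\cdots\boxtimes\H_{n_k}(q))\mod[\beta_1,\ldots,\beta_k]$ with $(\mathscr{R}_{\beta_1}\boxtimes\cdots\boxtimes\mathscr{R}_{\beta_k})\mod$, together with the commuting square of parabolic embeddings preceding (\ref{convol}) and the compatibility of convolution products (\ref{convol}). This reduces the problem to producing an equivalence
\[
\mathscr{R}_\beta\mod\simeq(\mathscr{R}_{\beta_1}\boxtimes\cdots\boxtimes\mathscr{R}_{\beta_k})\mod.
\]

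Next, introduce the idempotent
\[
e^\vee:=\sum_{(\bi_1,\ldots,\bi_k)\in I^{\beta_1}\times\cdots\times I^{\beta_k}}e(\bi_1\vee\cdots\vee\bi_k)\in\mathscr{R}_\beta,
\]
and identify $e^\vee\mathscr{R}_\beta e^\vee$ with $\mathscr{R}_{\beta_1}\otimes\cdots\otimes\mathscr{R}_{\beta_k}$. By Lemma~\ref{basis4}, a basis element $\psi_w y^{\ba}e(\bi)$ lies in $e^\vee\mathscr{R}_\beta e^\vee$ exactly when both $\bi$ and $w\bi$ are concatenated. Because the residue sets of the various $\beta_s$ are pairwise disjoint under the separation hypothesis of Definition~\ref{keyfn}, preserving concatenation forces $w$ into the Young subgroup $\Sym_{n_1}\times\cdots\times\Sym_{n_k}$. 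A direct dimension count (applying Lemma~\ref{basis4} to each factor) then shows that the parabolic embedding $\mathscr{R}_{\beta_1}\otimes\cdots\otimes\mathscr{R}_{\beta_k}\hookrightarrow e^\vee\mathscr{R}_\beta e^\vee$ is surjective, hence an algebra isomorphism.

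Then I would verify that $e^\vee$ is a full idempotent, i.e.\ $e^\vee V\neq0$ for every nonzero $V\in\mathscr{R}_\beta\mod$. Pick $\bi\in I^\beta$ with $e(\bi)V\neq0$. Since every residue appearing in $\bi$ belongs to a unique $\beta_s$, there exists a sorting permutation $w\in\Sym_n$ making $w\bi$ concatenated, and $w$ may be written as a product of simple transpositions $s_r$ each of which swaps positions whose residues lie in \emph{different} $\beta_s$, hence are non-equal and non-adjacent in $\Gamma_e$. The ``otherwise'' branch of the KLR quadratic relation in Definition~\ref{D:QuiverRelations} gives $\psi_r^2 e(\bi')=e(\bi')$ throughout this sort, so each $\psi_r\colon e(\bi')V\to e(s_r\bi')V$ is an isomorphism with inverse $\psi_r$ itself. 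Composing produces $e(w\bi)V\cong e(\bi)V\neq0$, and concatenation of $w\bi$ gives $e^\vee V\neq0$. Standard idempotent-truncation then makes $V\mapsto e^\vee V$ an equivalence $\mathscr{R}_\beta\mod\simeq(e^\vee\mathscr{R}_\beta e^\vee)\mod$, which combined with the previous step and transported back through $\theta$ (or $\theta'$) yields the lemma.

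The main obstacle is the fullness step: one must ensure that the sorting of residue sequences remains entirely inside the ``otherwise'' KLR branch, and it is precisely here that the \emph{separated} (rather than merely \emph{weakly separated}) hypothesis is indispensable, since coincident residues across blocks would land in the $i_r=i_{r+1}$ branch where $\psi_r^2=0$ and the sorting isomorphism collapses. Modulo this point the corner-algebra identification is essentially formal from Lemma~\ref{basis4}, and the degenerate case proceeds identically using Theorem~\ref{mainthm0b} in place of Theorem~\ref{mainthm0a}.
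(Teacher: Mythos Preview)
Your argument is correct but inverts the logical flow of the paper. The paper does not prove Lemma~\ref{morita}; it cites \cite[6.1.3]{K} and \cite{HZh}, where the result is established directly on the affine Hecke side using the Rogawski--Lusztig intertwining elements, and then \emph{derives} Corollary~\ref{app1} from it via Theorems~\ref{mainthm0a}/\ref{mainthm0b} and (\ref{convol}). You do the opposite: you prove the KLR statement (the content of Corollary~\ref{app1}) directly by an idempotent-truncation argument and transport back to the Hecke side. The key observation making your route work is that under the separation hypothesis every cross-block swap lands in the ``otherwise'' branch $\psi_r^2 e(\bi')=e(\bi')$, so the sorting $\psi_r$'s are literal units and in fact $e(\bi)\in\mathscr{R}_\beta e^\vee\mathscr{R}_\beta$ for every $\bi\in I^\beta$, giving $\mathscr{R}_\beta e^\vee\mathscr{R}_\beta=\mathscr{R}_\beta$ and hence a genuine Morita equivalence. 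This is worth noting because the paper explicitly remarks, after Corollary~\ref{app1}, that a direct KLR proof via the intertwiners of \cite{KKK} does not seem to go through; your argument sidesteps that obstacle by exploiting separation to make the bare $\psi_r$'s suffice. One cosmetic point: your ``dimension count'' for $e^\vee\mathscr{R}_\beta e^\vee\cong\bigotimes_s\mathscr{R}_{\beta_s}$ is really a basis bijection (both sides are infinite-dimensional); it works because any reduced expression for an element of the Young subgroup uses only the parabolic simple reflections, so the fixed reduced expressions chosen in Lemma~\ref{basis4} already lie in the image of the parabolic embedding.
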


As a second application of Theorem \ref{mainthm0a}, \ref{mainthm0b}, we get that

\begin{cor} \label{app1} Let $k\in\N$ and $n_1,\dots,n_k\in\N$ such that $\sum_{i=1}^kn_i=n$. Let $\beta_i\in Q_{n_i}^{+}$ for each $1\leq i\leq k$. Set $\beta:=\sum_{i=1}^{k}\beta_i$. Suppose that $\beta_1,\dots,\beta_k$ are pairwise separated, then there is an equivalence of categories: $$
\mathscr{R}_\beta\mod\sim\Bigl(\mathscr{R}_{\beta_1}\boxtimes\dots\boxtimes\mathscr{R}_{\beta_k}\Bigr)\mod .
$$
\end{cor}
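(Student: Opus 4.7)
The plan is to translate the statement from the quiver Hecke side to the affine Hecke side using Theorem \ref{mainthm0a} (or Theorem \ref{mainthm0b} in the degenerate case), apply the known Morita-type equivalence Lemma \ref{morita} on the affine side, and then translate back. Concretely, I will build the following zig-zag of equivalences of categories:
\[
\mathscr{R}_\beta\mod \;\simeq\; \widetilde{\mathscr{R}}_\beta\mod \;\simeq\; \widetilde{\H}_\beta(q)\mod_I \;=\; (\H_n(q)\mod)[\beta],
\]
where the first equivalence is the identification $\mathscr{R}_\beta\mod=\widetilde{\mathscr{R}}_\beta\mod$ established above, and the second is pullback along the $K$-algebra isomorphism $\theta\map{\widetilde{\mathscr{R}}_\beta}{\widetilde{\H}_\beta(q)}$ of Theorem \ref{mainthm0a}. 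The last equality comes from the identification of $\widetilde{\H}_\beta(q)\mod_I$ with the block $(\H_n(q)\mod)[\beta]$ recorded right before the corollary. Exactly the same chain (with $\theta'$ in place of $\theta$) works in the degenerate case.

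Now I would apply Lemma \ref{morita} to the middle term: since $\beta_1,\dots,\beta_k$ are pairwise separated, there is an equivalence
\[
(\H_n(q)\mod)[\beta] \;\simeq\; \bigl(\H_{n_1}(q)\boxtimes\cdots\boxtimes\H_{n_k}(q)\bigr)\mod[\beta_1,\dots,\beta_k].
\]
The right-hand side is, by definition, the product of the blocks $(\H_{n_i}(q)\mod)[\beta_i]$, so applying the previous paragraph's zig-zag to each factor $\beta_i$ yields
\[
\bigl(\H_{n_1}(q)\boxtimes\cdots\boxtimes\H_{n_k}(q)\bigr)\mod[\beta_1,\dots,\beta_k]\;\simeq\;\bigl(\mathscr{R}_{\beta_1}\boxtimes\cdots\boxtimes\mathscr{R}_{\beta_k}\bigr)\mod.
\]
Composing all three equivalences gives the desired equivalence $\mathscr{R}_\beta\mod\simeq\bigl(\mathscr{R}_{\beta_1}\boxtimes\cdots\boxtimes\mathscr{R}_{\beta_k}\bigr)\mod$.

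The main thing to verify, which is the only non-formal ingredient, is that the isomorphism $\theta$ is compatible with parabolic induction, so that the equivalence above is really the ``same'' equivalence on both sides and so that the product of zig-zags for the factors $\beta_i$ matches the zig-zag for $\beta$. This is exactly the content of the commutative diagram involving the injections $\iota_{\alpha,\beta}\map{\widetilde{\mathscr{R}}_\alpha\boxtimes\widetilde{\mathscr{R}}_\beta}{\widetilde{\mathscr{R}}_{\alpha+\beta}}$ and $\iota_{\alpha,\beta}\map{\widetilde{\H}_\alpha(q)\boxtimes\widetilde{\H}_\beta(q)}{\widetilde{\H}_{\alpha+\beta}(q)}$ displayed earlier in this section, together with the resulting isomorphism of convolution products $(V\circ W)^{\theta}\cong V^{\theta}\circ W^{\theta}$ (equation (\ref{convol})). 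In particular, pulling back Lemma \ref{morita} via $\theta$ produces precisely the equivalence asserted in the corollary, because the tensor-product decomposition of modules on the affine Hecke side corresponds under $\theta$ to the tensor-product decomposition on the quiver Hecke side. The degenerate case is identical with $\theta$ replaced by $\theta'$ and $\H_n(q)$ replaced by $H_n$.
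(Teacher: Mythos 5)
Your proof is correct and takes essentially the same route as the paper: translate $\mathscr R_\beta\mod$ to the block $(\H_n(q)\mod)[\beta]$ via the isomorphism $\theta$ of Theorem \ref{mainthm0a} (or $\theta'$ of Theorem \ref{mainthm0b}), apply Lemma \ref{morita}, translate each factor back, and use the compatibility of $\theta$ with the non-unital embeddings $\iota_{\alpha,\beta}$ recorded in (\ref{convol}). The paper's stated proof cites exactly these three ingredients and your write-up just expands them into the zig-zag explicitly.
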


\begin{proof} This follows from Lemma \ref{morita}, Theorem \ref{mainthm0a}, \ref{mainthm0b} and (\ref{convol}).
\end{proof}

We remark that the proof of Lemma \ref{morita} used certain intertwining elements of affine Hecke algebras introduced in \cite[Sect. 2]{Rog} and \cite[Sect. 5.1]{lusz}. Note that Kang, Kashiwara and Kim have introduced in \cite[(1.3.1)]{KKK} certain intertwiners inside the quiver Hecke algebras. However, it seems that one can not mimic the proof of Lemma \ref{morita} directly to get a proof of Corollary \ref{app1} inside the theory of quiver Hecke algebras because of the equality \cite[Lemma 1.3.1(i)]{KKK} (which only makes a difference for $\nu_a=\nu_{a+1}$ or $\nu_a\neq\nu_{a+1}$).

The degenerate case of the following result follows from \cite[6.1.4]{K} and an inductive argument. The non-degenerate case is similar. In both cases the argument used the categorical equivalence in Lemma \ref{morita}.

\begin{lem} \label{weakly} Let $k\in\N$ and $n_1,\dots,n_k\in\N$ such that $\sum_{i=1}^kn_i=n$. For each $1\leq i\leq k$, let $\beta_i\in Q_{n_i}^{+}$ and
$L(\underline{a}^{(i)})$ be an irreducible module over $\H_{n_i}$, where $\underline{a}^{(i)}=(a_1^{(i)},\dots,a_{n_i}^{(i)})\in I^{\beta_i}$. If for any $1\leq i\neq j\leq k$, $\underline{a}^{(i)}, \underline{a}^{(j)}$ are weakly separated, then $L(\underline{a}^{(1)})\circ\dots\circ L(\underline{a}^{(k)})$ is an irreducible module over $\H_n$.
\end{lem}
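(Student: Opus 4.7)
I will proceed by induction on $k$, the base case $k=1$ being trivial and the substance of the argument lying in the case $k=2$. For $k\geq 3$, the inductive hypothesis gives irreducibility of $M:=L(\underline{a}^{(1)})\circ\cdots\circ L(\underline{a}^{(k-1)})$ as a module over $\H_{n_1+\cdots+n_{k-1}}^{\text{aff}}$; by the classification of simple modules, $M\cong L(\underline{b})$ where $\underline{b}$ is (up to a permutation in $\Sym_{n_1+\cdots+n_{k-1}}$) the concatenation $\underline{a}^{(1)}\vee\cdots\vee\underline{a}^{(k-1)}$. Weak separation (Definition~\ref{keyfn}) is manifestly preserved under such aggregation: if each $\underline{a}^{(i)}$ with $i<k$ is weakly separated from $\underline{a}^{(k)}$, then so is $\underline{b}$. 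Hence the statement reduces to the case $k=2$ applied to the pair $L(\underline{b})$ and $L(\underline{a}^{(k)})$.

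For the base case $k=2$, write $L_1=L(\underline{a})$ over $\H_m^{\text{aff}}$ and $L_2=L(\underline{b})$ over $\H_n^{\text{aff}}$ with $\underline{a},\underline{b}$ weakly separated. The plan is to employ the Rogawski--Lusztig intertwining operators $\Phi_w$ inside a suitable localisation of $\H_{m+n}^{\text{aff}}$ (cf.\ \cite[Sect.~2]{Rog}, \cite[Sect.~5.1]{lusz}). Taking $w_0$ the longest minimal $(\Sym_m,\Sym_n)$-shuffle coset representative in $\Sym_{m+n}$, the normalised intertwiner $\widetilde{\Phi}_{w_0}$ moves $\underline{a}$ past $\underline{b}$. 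The key technical step---which is precisely the content of \cite[Theorem 6.1.4]{K} in the degenerate setting---is that the denominators of $\widetilde{\Phi}_{w_0}$, being of the form $x_r-x_s-1$ in the degenerate case (respectively $X_r-qX_s$ in the non-degenerate case), are non-vanishing on the generating tensor of $L_1\boxtimes L_2\subseteq L_1\circ L_2$: the only potentially dangerous factors involve pairs of residues $a_r,b_s$ with $|a_r-b_s|=1$, which are exactly the configurations excluded by weak separation. Consequently $\widetilde{\Phi}_{w_0}$ induces a non-zero homomorphism $L_1\circ L_2\to L_2\circ L_1$, and by symmetry an inverse, yielding an isomorphism $L_1\circ L_2\cong L_2\circ L_1$. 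Combined with the fact that $L_1\circ L_2$ has a unique irreducible head (by Frobenius reciprocity and the irreducibility of $L_1\boxtimes L_2$) and $L_2\circ L_1$ has a unique irreducible socle (by contragredient duality), the isomorphism forces head and socle of $L_1\circ L_2$ to coincide, so $L_1\circ L_2$ is irreducible.

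The main obstacle is the careful verification that $\widetilde{\Phi}_{w_0}$ is non-zero on the generating tensor under \emph{weak}---rather than full---separation; this reduces to an explicit inspection of the joint spectrum of the Jucys--Murphy operators on $L_1$ and $L_2$ against the denominator polynomials of $\widetilde{\Phi}_{w_0}$, and weak separation enters exactly in the form required by Definition~\ref{keyfn}. Once this calculation is carried out for the degenerate case (as in \cite[6.1.4]{K}), the non-degenerate case is formally parallel, with the polynomials $x_i-x_j-1$ replaced throughout by their $q$-analogues $X_i-qX_j$, and the same intertwiner machinery on the non-degenerate affine Hecke algebra side yields the conclusion.
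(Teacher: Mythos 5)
The paper gives essentially no proof of this lemma: the entire justification is the remark preceding the statement, attributing the degenerate case to \cite[6.1.4]{K} plus ``an inductive argument'' that ``used the categorical equivalence in Lemma~\ref{morita},'' with the non-degenerate case declared similar. Your proposal is therefore a fleshing-out rather than a comparison against a detailed argument. The overall skeleton matches: induct on $k$, with the substance in $k=2$, for which you cite \cite[6.1.4]{K} and the Rogawski--Lusztig intertwiners---exactly the ingredients the paper points to (the paper itself notes that Lemma~\ref{morita} is proved via those intertwiners). Where you diverge is the inductive step: you argue that $M:=L(\underline{a}^{(1)})\circ\cdots\circ L(\underline{a}^{(k-1)})$, being irreducible by the inductive hypothesis, is of the form $L(\underline{b})$ and can then be paired with $L(\underline{a}^{(k)})$ via the $k=2$ case, bypassing Lemma~\ref{morita} entirely. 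This is a valid shortcut, and your parenthetical ``up to a permutation'' is actually unnecessary: weak separation in Definition~\ref{keyfn} depends only on the blocks $\beta_1+\cdots+\beta_{k-1}$ and $\beta_k$, so any labelling of $M$ suffices, and in fact $M\cong L(\underline{a}^{(1)}\vee\cdots\vee\underline{a}^{(k-1)})$ outright since that concatenation is the unique simple head of the corresponding iterated induction.

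Two cautions on the $k=2$ sketch. First, the concluding inference ``the isomorphism forces head and socle of $L_1\circ L_2$ to coincide, so $L_1\circ L_2$ is irreducible'' is incomplete as stated. An isomorphism $L_1\circ L_2\cong L_2\circ L_1$ together with uniqueness of head and socle yields $\mathrm{hd}(L_1\circ L_2)\cong\mathrm{soc}(L_1\circ L_2)$, but promoting this to irreducibility requires in addition that this common simple constituent occurs with multiplicity one in $L_1\circ L_2$ (equivalently, that the endomorphism $L_1\circ L_2\twoheadrightarrow\mathrm{hd}\cong\mathrm{soc}\hookrightarrow L_1\circ L_2$ is not nilpotent); indecomposability alone, which does follow from the unique head, is not enough. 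Second, the reduction to checking non-vanishing of the intertwiner denominators on a single generating tensor relies on $L_1\circ L_2$ being cyclic over that tensor and on the normalised intertwiner preserving the induced module, which is standard but should be said. Since you explicitly delegate both of these points to \cite[6.1.4]{K}---as the paper does---they are not gaps in the proposal so much as places where the citation is carrying real weight and the reader should be told so.
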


The following result is a third application of Theorem \ref{mainthm0a}, \ref{mainthm0b}.

\begin{cor} \label{app2} Let $k\in\N$ and $n_1,\dots,n_k\in\N$ such that $\sum_{i=1}^kn_i=n$. For each $1\leq i\leq k$, let $\beta_i\in Q_{n_i}^{+}$ and
$L(\underline{a}^{(i)})$ be an irreducible module over $\mathscr{R}_{\beta_i}$, where $\underline{a}^{(i)}=(a_1^{(i)},\dots,a_{n_i}^{(i)})\in I^{\beta_i}$. Set $\beta:=\sum_{i=1}^{k}\beta_i$. If for any $1\leq i\neq j\leq k$, $\underline{a}^{(i)}, \underline{a}^{(j)}$ are weakly separated, then  $\widetilde{L}(\underline{a}^{(1)})\circ\dots\circ \widetilde{L}(\underline{a}^{(k)})$ is an irreducible module over $\mathscr{R}_{\beta}$.
\end{cor}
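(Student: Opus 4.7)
The plan is to reduce the corollary to Lemma~\ref{weakly} by transferring the statement from the quiver Hecke algebra to the affine Hecke algebra side via the main isomorphism (Theorem~\ref{mainthm0a} in the non-degenerate case, Theorem~\ref{mainthm0b} in the degenerate case), combined with the compatibility of convolution products expressed in (\ref{convol}). I shall describe the non-degenerate case; the degenerate case is identical after replacing $\theta$ by $\theta'$.

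First, using the two preliminary lemmas at the start of Section~5, the isomorphism $\theta:\widetilde{\mathscr{R}}_\alpha\cong\widetilde{\H}_\alpha(q)$ yields an equivalence of categories
$$
F_\alpha: \mathscr{R}_\alpha\mod\xrightarrow{\ \sim\ }(\H_{\height(\alpha)}(q)\mod)[\alpha],\qquad V\mapsto V^\theta.
$$
The commutative square preceding (\ref{convol}) together with (\ref{convol}) itself shows that the family $\{F_\alpha\}_\alpha$ intertwines the convolution products: $F_{\alpha+\gamma}(V\circ W)\cong F_\alpha(V)\circ F_\gamma(W)$ for every $V\in\mathscr{R}_\alpha\mod$ and $W\in\mathscr{R}_\gamma\mod$.

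Second, I would establish the identification $F_{\beta_i}\bigl(\widetilde{L}(\underline{a}^{(i)})\bigr)\cong L(\underline{a}^{(i)})$ for each $i$. Both simples are produced inductively by the crystal operator recipe $\widetilde{f}_{a_{n_i}}\cdots\widetilde{f}_{a_1}1$, where on each side $\widetilde{f}_a$ is defined as the cosocle of an induction functor of the shape $-\circ\widetilde{L}(a)$ (respectively $-\circ L(a)$). Hence the intertwining property from Step~1 together with the base case $F_{\alpha_a}(\widetilde{L}(a))\cong L(a)$ yields the desired identification by induction on $n_i$. The base case is a direct inspection of Theorem~\ref{mainthm0a}: under $\theta$ the relation $y_1 e(a)=0$ defining $\widetilde{L}(a)$ translates into $(1-q^{-a}\hX_1)\he(a)=0$, that is, $\hX_1\he(a)$ acts as the scalar $q^a$, which is exactly the defining property of the one-dimensional simple $\widetilde{\H}_{\alpha_a}(q)$-module $L(a)$.

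Third, Lemma~\ref{weakly} applied to $L(\underline{a}^{(1)}),\ldots,L(\underline{a}^{(k)})$ under the weak separability hypothesis gives that $L(\underline{a}^{(1)})\circ\cdots\circ L(\underline{a}^{(k)})$ is an irreducible $\H_n(q)$-module; since it has central character $\chi_\beta$ it lies in $(\H_n(q)\mod)[\beta]$. Applying $F_\beta^{-1}$ and combining Steps~1 and~2 yields
$$
F_\beta^{-1}\bigl(L(\underline{a}^{(1)})\circ\cdots\circ L(\underline{a}^{(k)})\bigr)\cong \widetilde{L}(\underline{a}^{(1)})\circ\cdots\circ\widetilde{L}(\underline{a}^{(k)}),
$$
and since $F_\beta$ is an equivalence of categories the right-hand side is irreducible over $\mathscr{R}_\beta$, establishing the corollary. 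The main obstacle I anticipate is the crystal-operator compatibility in Step~2: once one has pinned down that the inductive constructions of $\widetilde{L}(-)$ and $L(-)$ correspond under $F_\alpha$, the rest of the argument is a purely formal transport of structure through the categorical equivalence.
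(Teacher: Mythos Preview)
Your proposal is correct and follows essentially the same route as the paper: the paper's proof is the single line ``This follows from Lemma~\ref{weakly} and (\ref{convol}),'' and you have unpacked precisely what that entails. Your Step~2 (identifying $\widetilde{L}(\underline{a}^{(i)})$ with $L(\underline{a}^{(i)})$ under the equivalence) is the one point the paper leaves implicit, and your inductive argument via the crystal operators $\widetilde{f}_a$ is the right way to make it honest. One minor notational slip: in the paper $(-)^\theta$ sends $\widetilde{\H}_\alpha(q)$-modules to $\widetilde{\mathscr{R}}_\alpha$-modules, not the other way round, so your $F_\alpha$ is really the inverse of the paper's $(-)^\theta$; this is harmless since both directions are equivalences.
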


\begin{proof} This follows from Lemma \ref{weakly} and (\ref{convol}).
\end{proof}

In particular, the above corollary gives a partial answer in type $A$ to the question raised in \cite[Problem 7.6(ii)]{KR}. It would be interesting to know whether the sufficient condition given in the above corollary is also necessary or not.\smallskip

\bigskip

\appendix
\def\theequation{\Alph{section}\arabic{equation}}
\def\theProposition{\Alph{section}\arabic{equation}}
\def\theLemma{\Alph{section}\arabic{equation}}
\def\theTheorem{\Alph{section}\arabic{equation}}
\def\theCorollary{\Alph{section}\arabic{equation}}

\section{The generalized Ore localization}

In this appendix, we want to generalize the classical construction of Ore localization with respect to a right denominator set to a more general situation as follows. For the classical theory of Ore localization of non-commutative ring, we refer the readers to \cite[\S10]{Lam} and \cite[\S1]{Stenstrom}).

Let $A$ be a (non-commutative) ring with identity $1$, $A_0$ be a commutative subring of $A$. Let $\{e_i|i\in J\}$ be a set of pairwise orthogonal idempotents of $A$ which sums to $1$, where $J$ is a finite indexing set. That says, $\sum_{i\in J}e_i=1$ and $e_ie_j=\delta_{ij}e_i$ for any $i,j$.
In particular, we have that $$
A=\bigoplus_{i,j\in J}e_jAe_i .
$$
We assume further that $fe_i=e_if$ for any $f\in A_0$ and $i\in J$. For each $i\in J$, let $S_i$ be a multiplicatively closed subset in $e_iA_0e_i$ with $e_i\in S_i$. We want to investigate certain generalized Ore conditions on $S_i$ under which the ring $A$ can be embedded into a larger ring $\widetilde{A}$ such that \begin{enumerate}
\item[(G1)] for any $i\in J$ and any $s\in S_i$, $s$ is an invertible element in the unital ring $e_i\widetilde{A}e_i$ (with identity element $e_i$); and
\item[(G2)] each element in $e_i\widetilde{A}e_j$ has the form $$
\sum_{i,j\in J}e_ia_{i,j}f_{i,j}^{-1}e_j,
$$
where $a_{i,j}\in A$, $f_{i,j}\in S_j$.
\end{enumerate}

\begin{lem} \label{OreLocal} With the notations and assumptions as above, and assume further that the subsets $\{S_i\}_{i\in J}$ satisfy the following two conditions: \begin{enumerate}
\item[(O1)] for any $g\in A$, $s\in S_i$, $$
se_ig=0\Longrightarrow e_ig=0,\quad\,\, ge_is=0\Longrightarrow ge_i=0 .
$$
In particular, $0\notin S_i$; and
\item[(O2)] for any $i,j\in J$ and any $a\in e_jAe_i$, $s\in S_i$ and $t\in S_j$, there exist some $b,c\in e_jAe_i$, $u\in S_j$ and $v\in S_i$, such that $ua=bs$, $av=tc$.
\end{enumerate}
then there exists a ring $\widetilde{A}$ together with an injective ring homomorphism $\varphi: A\hookrightarrow \widetilde{A}$ such that both (G1) and (G2) hold, and for any ring homomorphism $\psi: A\rightarrow B$ such that $\psi(s)$ is invertible in $\psi(e_i)B\psi(e_i)$ for any $s\in S_i$ and $i\in J$, then there is a unique ring homomorphism $\sigma: \widetilde{A}\rightarrow B$ such that $\sigma\varphi=\psi$. Moreover, if $\psi$ is injective then $\sigma$ is injective too.
\end{lem}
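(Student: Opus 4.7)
The plan is to mimic the classical construction of the right Ore localization, adapted to the present multi-idempotent situation. I would define $\widetilde{A}=A[S_1,\ldots,S_m]$ as a direct sum $\bigoplus_{i,j}e_j\widetilde{A}e_i$, where $e_j\widetilde{A}e_i$ is the set of equivalence classes $[a,s]$ (thought of as $as^{-1}$) with $a\in e_jAe_i$ and $s\in S_i$, modulo the relation $(a,s)\sim(a',s')$ iff there exist $c,c'\in e_iAe_i$ with $ac=a'c'$ and $sc=s'c'\in S_i$. Reflexivity and symmetry are immediate; transitivity uses (O2) to build a common right multiplier of two elements of $S_i$ and then uses (O1) in the form ``$s g=0\Rightarrow g=0$'' to cancel a common left $S_i$-factor.

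Next I would install the ring structure. For addition in $e_j\widetilde{A}e_i$, apply the second relation in (O2) to $s,t\in S_i$ to find $v\in S_i$ and $v'\in e_iAe_i$ with $sv=tv'\in S_i$, then set $[a,s]+[b,t]:=[av+bv',sv]$. For multiplication $[a,s]\cdot[b,t]\in e_j\widetilde{A}e_k$ with $[a,s]\in e_j\widetilde{A}e_i$ and $[b,t]\in e_i\widetilde{A}e_k$, apply the second relation in (O2) to $b\in e_iAe_k$ and $s\in S_i$ to obtain $b_1\in e_iAe_k$ and $u_1\in S_k$ with $bu_1=sb_1$, then set $[a,s]\cdot[b,t]:=[ab_1,tu_1]$. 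Note that $t,u_1\in e_kA_0e_k$ commute (since $A_0$ is commutative), so $tu_1\in S_k$. Well-definedness on representatives, associativity and distributivity are standard but lengthy checks, each requiring additional invocations of (O2) to reduce different right fractions to a common denominator.

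The natural map $\varphi\colon A\to\widetilde{A}$, $a\mapsto\sum_{i,j}[e_jae_i,e_i]$, is a ring homomorphism, and (G1) follows from $[s,e_i]\cdot[e_i,s]=[e_i,e_i]=[e_i,s]\cdot[s,e_i]$ in $e_i\widetilde{A}e_i$. The right-fraction form in (G2) is built into the construction; the left-fraction form is obtained by using the first relation in (O2) to rewrite an arbitrary $[a,s]$ as a left fraction $u^{-1}b$ with $u\in S_j$ and $b\in e_jAe_i$ satisfying $ua=bs$. Injectivity of $\varphi$ is where (O1) enters: if $\varphi(a)=0$ then for each pair $i,j$ there exists $c\in e_iAe_i$ with $e_ic\in S_i$ and $(e_jae_i)(e_ic)=0$, and the second half of (O1) with $h=e_ja$ and $s=e_ic$ forces $e_jae_i=0$, hence $a=0$. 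Finally, the universal property is verified by setting $\sigma([a,s]):=\psi(a)\psi(s)^{-1}$: well-definedness on equivalence classes is immediate from the defining relation, multiplicativity follows from the same Ore manipulations used to define the product in $\widetilde{A}$, and if $\psi$ is injective then $\sigma([a,s])=0$ forces $\psi(a)=0$ by invertibility of $\psi(s)$, hence $a=0$ and $[a,s]=0$. The main obstacle will simply be the voluminous but routine bookkeeping required to verify the ring axioms and the independence of the operations from the various auxiliary Ore-condition choices in the multi-idempotent setting.
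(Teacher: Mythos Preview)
Your proposal is correct and follows essentially the same construction as the paper: build $\widetilde{A}$ as $\bigoplus_{i,j}\bigl(e_jAe_i\times S_i\bigr)/\!\sim$, define addition and multiplication on fractions, and verify the universal property. The only notable difference is that the paper exploits the commutativity of $A_0$ (hence of each $S_i$) to use the simpler equivalence relation $(a,s)\sim(b,t)\iff at=bs$ and the simpler addition rule $[(a,s)]+[(b,t)]:=[(at+bs,st)]$, whereas you opt for the fully general Ore formulation with auxiliary multipliers $c,c'$; both yield the same ring, but the paper's shortcut trims some of the bookkeeping you anticipate.
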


\begin{proof} We define $$
\widetilde{A}:=\bigoplus_{i,j\in J}\Bigl(e_jAe_i\times S_i\Bigr)/\!\sim_{ij},
$$
where `$\sim_{ij}$" is an equivalence relation in $e_jAe_i\times S_i$  defined as $(a,s)\sim_{ij} (b,t)$ if $at=bs$, where $a,b\in e_jAe_i$, $s,t\in S_i$. Denote by $[(a,s)]$ the equivalence class containing $(a,s)$.

We define the addition and multiplication in an obvious way: for any $a\in e_jAe_i$, $b\in e_kAe_l$, $s\in S_i$, $t\in S_l$:

1) In the case $i=l$, $[(a,s)]+[(b,t)]:=[(at+bs,st)]$; in the case $i\not=l$, $[(a,s)]+[(b,t)]$ means a formal sum;

2) $$
[(a,s)][(b,t)]:=\begin{cases} [(ac,tu)], &\text{if $i=k$\;\;\; where $bu=sc$, $u\in S_l$, $c\in e_iAe_l$;}\\
0, &\text{if $i\not=k$.}\end{cases}
$$
It is routine to check that the above definition is independent of the choice of the representing couples and $\widetilde{A}$ is a well-defined ring. The universal property of $\widetilde{A}$ follows from a similar (and more easy) argument as in the classical Ore localization (cf. \cite[Corollary 10.11]{Lam},\cite[Proposition 1.4]{Stenstrom}). Finally, assume that $\psi$ is injective. Suppose that $\sigma(z)=0$, where $$
z=\sum_{i,j\in J}a_{i,j}\varphi(f_{i,j})^{-1}\in \widetilde{A},\quad\,a_{i,j}\in e_iAe_j, f_{i,j}\in S_j,\,\,\forall\,i,j.
$$
Then for any $i,j\in J$, $$
\sigma\Bigl(a_{i,j}\varphi(f_{i,j})^{-1}\Bigr)=\sigma(e_ize_j)=\sigma(e_i)\sigma(z)\sigma(e_j)=0.
$$
It follows that $$
\psi(a_{i,j})=\psi(a_{i,j})\psi(f_{i,j})^{-1}\psi(f_{i,j})=\sigma\Bigl(a_{i,j}\varphi(f_{i,j})^{-1}\Bigr)\psi(f_{i,j})=0 .
$$
Since $\psi$ is injective, we can see that $a_{i,j}=0$ and hence $a_{i,j}\varphi(f_{i,j})^{-1}=0$ for each $i,j$, which implies that $z=0$ and hence $\sigma$ is injective. This completes the proof of the lemma.
\end{proof}

\begin{dfn} \label{GOL} With the notations and assumptions as in Lemma \ref{OreLocal}, we shall call the ring $\widetilde{A}$ the generalized Ore localization of $A$ with respect to the datum $(A_0,\{e_i\}_{i\in J}, \{S_j\}_{j\in J})$.
\end{dfn}

\bigskip

\end{document}